\documentclass[11pt,reqno]{amsart}
\usepackage{fullpage,amssymb,amsmath,mathrsfs,color}
\usepackage[final]{hyperref}

\newtheorem{theorem}{Theorem}[section]
\newtheorem{proposition}[theorem]{Proposition}

\newtheorem{lemma}[theorem]{Lemma}
\newtheorem{cor}[theorem]{Corollary}
\newtheorem*{question}{Question}

\theoremstyle{definition}
\newtheorem{definition}[theorem]{Definition}
\newtheorem{convention}[theorem]{Convention}
\newtheorem{remark}[theorem]{Remark}
\newtheorem{setting}[theorem]{Setting}
\newtheorem{notation}[theorem]{Notation}

\def\P{\mathbf P}
\newcommand\setsep{;\ }
\def\h{\hat{\text{ }}}
\def\rest{\hskip-2.5pt\restriction}
\def\sspan{\operatorname{span}}
\def\rng{\operatorname{Rng}}
\def\dom{\operatorname{Dom}}
\def\d{\operatorname{d}}
\def\diam{\operatorname{diam}}
\def\ov{\overline}
\def\S{\omega^{<\omega}}
\def\en{\mathbb N}
\def\er{\mathbb R}
\def\qe{\mathbb Q}
\def\zet{\mathbb Z}
\def\B{\mathcal B}
\def\F{\mathscr{F}}

\begin{document}

\author{Marek C\'uth}
\author{Martin Rmoutil}
\author{Miroslav Zelen\'y}
\title{On Separable Determination of $\sigma$-$\P$-Porous Sets \\ in Banach Spaces}

\thanks{M.\,C\'uth was supported by the Grant No. 282511/B-MAT/MFF of Grant Agency of Charles University in Prague.
M.\,Rmoutil was supported by the Grant No. 710812/B-MAT/MFF of Grant Agency of Charles University in Prague.
M.\,Zelen\'y was supported by the grant P201/12/0436.}

\email{marek.cuth@gmail.com, caj@rmail.cz, zeleny@karlin.mff.cuni.cz}
\address{Charles University, Faculty of Mathematics and Physics, Department of Mathematical\linebreak Analysis, Sokolovsk\'a 83, 186 75 Prague 8, Czech Republic}
\subjclass[2010]{46B26, 28A05, 54E35, 58C20}

\keywords{separable reduction, elementary submodel, Foran system, porosity-like relation, cone small set, Asplund space, approximately convex function, Fr\'echet differentiability}

\date{\today}

\begin{abstract}
We use a method involving elementary submodels and a partial converse of Foran lemma to prove separable reduction theorems concerning Suslin $\sigma$-$\P$-porous sets where $\P$ can be from a rather wide class of porosity-like relations in complete metric spaces. In particular, we separably reduce the notion of Suslin cone small set in Asplund spaces. As an application we prove a theorem stating that a continuous approximately convex function on an Asplund space is Fr\'echet differentiable up to a cone small set.
\end{abstract}

\maketitle

\section{Introduction}
The present paper could be considered as a sequel to the articles \cite{Cuth} and \cite{Cuth-Rmoutil}.
Our main aim is to further investigate separable determination of various properties of sets and functions in metric spaces
(especially Banach spaces). That is, given a nonseparable metric space $X$ and a property of sets (or functions etc.) in $X$, we are interested whether certain statements about the property hold, provided that they hold in (some) separable subspaces of $X$.
In particular, if $X$ is a metric space, we are interested in $\sigma$-$\P$-porous sets in $X$, where $\P$ is a porosity-like relation on $X$
(see the definition below).

The key method we use to obtain separable determination results uses countable elementary structures which we call elementary submodels.
This method is described in Section~\ref{S:submodels}. Further details and examples of use can be found in \cite{Cuth} and \cite{Cuth-Rmoutil}.
However, the reader ought to note that there are other ways to go about this topic. An example is the use of rich families of Banach spaces,
which is described in detail, e.g., in \cite{LPT}. Sometimes one can also opt to prove this sort of results in the ``elementary way'',
in some sense replicating parts of the proof of the L\"owenheim-Skolem theorem. This approach would in many cases be very complicated, but can give a deeper insight, when possible.

In the rather technical Section~\ref{S:Foran} we prove essential auxiliary results which we use in Section~\ref{S:porosity-like} to prove our general separable determination result, Proposition \ref{P:reduction-porosity-like}. The general scheme of our proof is rather similar to that of separable determination of $\sigma$-upper porous sets in \cite{Cuth-Rmoutil} and involves the Foran lemma and it's partial converse.
The difference is that here we need no inscribing theorems as in \cite{Cuth-Rmoutil} to prove the statement for all Suslin sets.

Section~\ref{S:cone} contains the main result of this article, the separable determination of the notion of cone small sets in Asplund spaces
(Theorem \ref{T:cone}). Arguably the deeper part of this statement is a consequence of Proposition \ref{P:reduction-porosity-like}.
In the last section we provide several applications of our results, most notably Theorem \ref{T:appl-approx-convexity}.

Let us recall the most relevant notions, definitions, and notations. We denote by $\omega$ the set of all natural numbers (including $0$),
by $\en$ the set $\omega \setminus \{0\}$, by $\er_+$ the interval $(0,\infty)$, and $\qe_+$ stands for $\er_+ \cap \qe$.
We denote by ${\omega}^{<\omega}$ the set of all finite sequences of elements of $\omega$, and by $\omega^\omega$ the set of all infinite sequences. The notion of a countable set includes also finite sets. If $f$ is a mapping then we denote by $\rng f$ the range of $f$
and by $\dom f$ the domain of $f$. By writing $f\colon X \to Y$ we mean that $f$ is a mapping with $\dom f = X$
and $\rng f \subset Y$. By the symbol $f\rest_{Z}$ we denote the restriction of the mapping $f$ to the set $Z$.

If $(X,\rho)$ is a metric space, we denote by $U_X(x,r)$ the open ball, i.e., the set $\{y \in X\setsep \rho(x,y) < r\}$.
We often write $U(x,r)$ instead of $U_X(x,r)$. If $A,B \subset X$ are nonempty sets in a metric space $X$, we denote by $\d(A,B)$
the distance between the sets $A$ and $B$, i.e., $\d(A,B) = \inf\{\rho(a,b)\setsep a \in A, b \in B\}$.
We shall consider normed linear spaces over the field of real numbers. If $X$ is a normed linear space, $X^*$ stands for the dual space of $X$.

We say that $\P$ is a \emph{porosity-like relation} on the space $X$ if $\P$ is a relation between points $x \in X$ and sets $A \subset X$
(i.e. $\P \subset X \times 2^X$) satisfying the following conditions:
\begin{itemize}
\item[(i)]   if $A\subset B$ and $\P(x,B)$, then $\P(x,A)$,

\item[(ii)]  $\P(x,A)$ if and only if there is $r > 0$ such that $\P(x,A \cap U(x,r))$,

\item[(iii)] $\P(x,A)$ if and only if $\P(x,\overline{A})$.
\end{itemize}
A set is $\P$-porous, if $\P(x,A)$ for every $x \in A$. A set is $\sigma$-$\P$-porous,
if it is a countable union of $\P$-porous sets.

\section{Elementary submodels}\label{S:submodels}

In this section we recall some basic notions concerning the method of elementary submodels.
A brief description of this method can be found in \cite{Cuth-Rmoutil}, for a more detailed description see \cite{Cuth}.

First, let us recall some definitions. Let $N$ be a fixed set and $\phi$ a formula in the language of $ZFC$.
Then the \emph{relativization of $\phi$ to $N$} is the formula $\phi^N$ which is obtained from $\phi$ by replacing each quantifier of the form ``$\forall x$'' by ``$\forall x \in N$'' and each quantifier of the form ``$\exists x$'' by ``$\exists x \in N$''.
If $\phi(x_1,\ldots,x_n)$ is a formula with all free variables shown (i.e. a formula whose free variables are exactly $x_1,\ldots,x_n$) then
\emph{$\phi$ is absolute for $N$} if and only if
\[
\forall a_1, \ldots, a_n \in N\colon \bigl(\phi^N(a_1,\ldots,a_n) \leftrightarrow \phi(a_1,\ldots,a_n)\bigr).
\]
The method is based mainly on the following theorem (a proof can be found in \cite[Chapter IV, Theorem 7.8]{Kunen}).

\begin{theorem}\label{T:countable-model}
Let $\phi_1, \ldots, \phi_n$ be any formulas and $X$ any set. Then there exists a set $M \supset X$ such that
$\phi_1, \ldots, \phi_n \text{ are absolute for } M$ and $|M| \leq \max(\omega,|X|)$.
\end{theorem}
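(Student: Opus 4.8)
The plan is to reduce the statement to two classical facts: the reflection theorem and the downward L\"owenheim--Skolem theorem. Since no set can be an elementary submodel of the whole universe, the first move is to replace ``the universe'' by a genuine set. So I would first enlarge the finite list $\phi_1,\dots,\phi_n$ to the (still finite) list $\Phi$ of all their subformulas, and then use the reflection theorem to produce an ordinal $\alpha$ such that every formula in $\Phi$ is absolute for $V_\alpha$ and, simultaneously, $X\subset V_\alpha$. The latter can be arranged because $X$ is a set and $V=\bigcup_\beta V_\beta$, so it suffices to take $\alpha$ large; and the class of ordinals for which the reflection conclusion holds is unbounded, so both demands can be met at once.

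For the reflection step I would recall the standard argument: for each subformula in $\Phi$ of the form $\exists y\,\psi(y,\bar x)$, consider the ordinal-valued function sending a tuple $\bar x$ to the least $\gamma$ for which $V_\gamma$ already contains a witness of the statement $\exists y\,\psi(y,\bar x)$ (and to $0$ if there is no witness at all); then choose $\alpha$ so that $V_\alpha$ is closed under these finitely many functions and is a limit ordinal. An induction on the complexity of formulas, via the Tarski--Vaught criterion, then shows that each formula of $\Phi$ is absolute between $V_\alpha$ and $V$. This part uses no choice.

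Next I would carry out the downward L\"owenheim--Skolem construction \emph{inside} the set-sized structure $(V_\alpha,\in)$. Using the axiom of choice, fix a well-ordering of $V_\alpha$; for each $\exists y\,\psi(y,\bar x)$ in $\Phi$ this yields a bona fide Skolem function $V_\alpha^{k}\to V_\alpha$ returning the least witness when one exists. Put $M_0=X$ (throwing in $\emptyset$ if one wants $M_0\neq\emptyset$), and let $M_{m+1}$ be $M_m$ together with all values of these Skolem functions on tuples from $M_m$. Each step adds at most $\max(\omega,|M_m|)$ elements, so $|M_m|\le\max(\omega,|X|)$ for every $m$, and $M:=\bigcup_{m\in\omega}M_m$ satisfies $X\subset M$ and $|M|\le\max(\omega,|X|)$. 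Since $M$ is closed under the Skolem functions, the Tarski--Vaught test gives that every formula of $\Phi$ is absolute between $M$ and $V_\alpha$; combining this with the reflection property of $V_\alpha$, for all $\bar a\in M$ we get $\phi_i^{M}(\bar a)\leftrightarrow\phi_i^{V_\alpha}(\bar a)\leftrightarrow\phi_i(\bar a)$, which is exactly the asserted absoluteness.

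The one genuinely delicate point is conceptual rather than technical: one cannot take ``$M\prec V$'' directly, so the argument is forced to go through the reflection theorem to manufacture the intermediate set $V_\alpha$, and one must be disciplined about two closure operations --- closing the formula list under subformulas (otherwise absoluteness can fail at an inner quantifier) and closing $M$ under Skolem functions for all formulas of $\Phi$. After that it is bookkeeping: the cardinality bound is the routine ``countable increasing union'' estimate, and the absoluteness claim is the standard induction on formulas, the atomic and Boolean cases being immediate and the existential case being precisely where the Skolem closure of $M$ is used.
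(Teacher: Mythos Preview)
Your argument is correct and is essentially the standard proof (reflection to some $V_\alpha$ followed by a Skolem-hull construction), which is precisely what the cited reference \cite[Chapter IV, Theorem 7.8]{Kunen} does. Note, however, that the paper itself does not give a proof of this theorem at all: it simply states the result and refers to Kunen, so there is nothing in the paper to compare your proof against beyond that citation.
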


Since the set from Theorem \ref{T:countable-model} will often be used, the following notation is useful.

\begin{definition}
Let $\phi_1, \ldots, \phi_n$ be any formulas and $X$ be any countable set.
Let $M \supset X$ be a countable set such that $\phi_1, \ldots, \phi_n$ are absolute for $M$.
Then we say that $M$ is an \emph{elementary submodel for $\phi_1,\ldots,\phi_n$ containing $X$}.
This is denoted by $M \prec (\phi_1,\ldots,\phi_n; X)$.
\end{definition}

The fact that certain formula is absolute for $M$ will always be used in order to satisfy the assumption of the following lemma. It is a similar statement to \cite[Lemma 2.6]{Cuth}.
Using this lemma we can force the elementary submodel $M$ to contain all the needed objects created (uniquely) from elements of $M$.

\begin{lemma}\label{L:unique-M}
Let $\phi(y,x_1,\ldots,x_n)$ be a formula with all free variables shown and $Y$ be a countable set.
Let $M$ be a fixed set, $M \prec (\phi, \exists y \colon \phi(y,x_1,\ldots,x_n);\; Y)$, and
$a_1,\ldots,a_n \in M$ be such that there exists a set $u$ satisfying
$\phi(u,a_1,\ldots,a_n)$. Then there exists $u \in M$ such that $\phi(u,a_1,\ldots,a_n)$.
\end{lemma}

\begin{proof}
Using the absoluteness of the formula $\exists u\colon \phi(u,x_1,\ldots,x_n)$ there exists $u\in M$ satisfying $\phi^M(u,a_1,\ldots,a_n)$.
Using the absoluteness of $\phi$ we get, that for this $u\in M$ the formula $\phi(u,a_1,\ldots,a_n)$ holds.
\end{proof}

It would be very laborious and pointless to use only the basic language of the set theory.
For example, having a Banach space $X$ and $x, y \in X$ we often write $x + y$ and we know that this is a shortcut for a formula with free variables $x$, $y$, and $+$ (note that the symbol $+$ is here considered to be a variable as it depends on the Banach space).
Therefore, in the following text we use this extended language of the set theory as we are used to.
We shall also use the following convention.

\begin{convention}
Whenever we say ``\emph{for any suitable elementary submodel $M$ (the following holds \dots)}''
we mean that  ``\emph{there exists a list of formulas $\phi_1,\ldots,\phi_n$ and a countable set $Y$ such that for every $M \prec (\phi_1,\ldots,\phi_n;Y)$ (the following holds \dots)}''.
\end{convention}

By using this new terminology we lose the information about the formulas $\phi_1,\ldots,\phi_n$ and the set $Y$.
However, this is not important in applications.

Let us recall several further results about suitable elementary submodels
(all the proofs are based on Lemma \ref{L:unique-M} and they can be found in \cite[Chapters 2 and 3]{Cuth}).

\begin{proposition}
For any suitable elementary submodel $M$ the following holds.
\begin{itemize}
\item[(i)] If $A,B \in M$, then $A \cap B \in M$, $B \setminus A \in M$ and $A \cup B \in M$.

\item[(ii)] Let $f$ be a function such that $f\in M$. Then $\dom{f} \in M$, $\rng{f} \in M$ and for every
$x \in \dom{f} \cap M$ we have $f(x) \in M$.

\item[(iii)] Let $S$ be a finite set. Then $S \in M$ if and only if $S \subset M$.

\item[(iv)] Let $S \in M$ be a countable set. Then $S \subset M$.

\item[(v)] For every natural number $n > 0$ and for arbitrary $n+1$ sets $a_0, \ldots, a_n$ it is true that
\[
a_0, \ldots, a_n \in M \iff (a_0, \ldots, a_n) \in M.
\]
\end{itemize}
\end{proposition}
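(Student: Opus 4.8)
The plan is to obtain each of the five items as a direct application of Lemma \ref{L:unique-M}. In nearly every case the relevant formula $\phi(y,x_1,\dots,x_k)$ is \emph{functional in $y$}, i.e.\ of the shape ``$y$ is the uniquely determined set built in a fixed way from $x_1,\dots,x_k$''; for such a $\phi$ both $\phi$ and $\exists y\,\phi(y,x_1,\dots,x_k)$ are theorems of $ZFC$, so as soon as $M$ is elementary for this pair of formulas and $a_1,\dots,a_k\in M$, the set constructed from $a_1,\dots,a_k$ must itself lie in $M$. Only finitely many formulas are needed for all five items together, so we fix that list $\phi_1,\dots,\phi_n$ (and take, say, $Y=\emptyset$) once and interpret ``for any suitable elementary submodel $M$'' accordingly. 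The one genuine subtlety is that the list must stay finite, so the two items that mention a free natural number, (iii) and (v), have to be reduced to a fixed stock of ``building-block'' formulas plus a meta-induction.

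For (i) I would take the three formulas expressing $y=x_1\cap x_2$, $y=x_1\cup x_2$ and $y=x_2\setminus x_1$ (each functional in $y$, and such that $\exists y\,\phi$ is provable via Separation, Union and Pairing) and apply Lemma \ref{L:unique-M} with $a_1=A$, $a_2=B$. For (ii) I would similarly use $y=\dom x_1$ and $y=\rng x_1$ (functional and satisfiable for every set $x_1$) to get $\dom f,\rng f\in M$, and the formula $\psi(y,x_1,x_2)$ asserting ``if $x_1$ is a function and $x_2\in\dom x_1$ then $y$ is the value of $x_1$ at $x_2$, and otherwise $y=\emptyset$''; this $\psi$ is functional in $y$, so for a function $f\in M$ and $x\in\dom f\cap M$ Lemma \ref{L:unique-M} produces $f(x)\in M$.

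I would then record the auxiliary fact that $n\in M$ for every $n\in\omega$: the formulas $y=\emptyset$ and $y=\{x_1\}$ are functional and satisfiable, so $\emptyset\in M$, and by (i), $n\in M$ implies $n\cup\{n\}\in M$; a meta-induction finishes this. Item (iv) is now easy: if $S\in M$ is countable and nonempty then ``there is a surjection of $\omega$ onto $S$'' is true, so, applying Lemma \ref{L:unique-M} in its genuinely non-functional form to the formula $\phi(y,x_1)$ asserting ``$y$ is a surjection of $\omega$ onto $x_1$'', there is such an $f$ with $f\in M$; by (ii) together with the auxiliary fact, $f(k)\in M$ for every $k\in\omega$, hence $S=\rng f\subseteq M$ (the case $S=\emptyset$ being trivial). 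Item (iii) follows: ``$\Rightarrow$'' is the special case of (iv) with $S$ finite, and for ``$\Leftarrow$'' I would meta-induct on $|S|$, writing $S=S'\cup\{a\}$ with $a\in M$, $S'\subseteq M$ and applying the singleton formula together with (i).

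Finally, for (v): for ``$\Rightarrow$'' I would use the functional formula $y=(x_1,x_2)$ (Kuratowski ordered pair) and build $(a_0,\dots,a_n)=((a_0,\dots,a_{n-1}),a_n)$ by a meta-induction on $n$; for ``$\Leftarrow$'' I would use the two functional ``projection'' formulas ``$y$ is the first coordinate of the ordered pair $x_1$'' and ``$y$ is the second coordinate of $x_1$'' to peel $a_n$ off $(a_0,\dots,a_n)\in M$ and recover $(a_0,\dots,a_{n-1})\in M$, again meta-inducting on $n$. I do not anticipate a real obstacle: the only things to watch are the bookkeeping just described --- confining all dependence on $n$ to the meta-level and keeping the formula list to the finite stock of building blocks ($\emptyset$, singleton, pair, union, domain, range, the two coordinate projections) --- together with the point that (iv), alone among the five items, genuinely needs the form of Lemma \ref{L:unique-M} in which the witness $y$ is not required to be unique.
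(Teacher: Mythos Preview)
Your proposal is correct and follows precisely the approach the paper indicates: the paper does not spell out a proof but states that ``all the proofs are based on Lemma~\ref{L:unique-M} and they can be found in \cite[Chapters 2 and 3]{Cuth}'', and your argument does exactly this, applying Lemma~\ref{L:unique-M} to suitable (mostly functional) formulas and handling the variable-$n$ items (iii) and (v) by meta-induction over a fixed finite stock of building-block formulas. Your observation that (iv) is the one place where the non-uniqueness form of Lemma~\ref{L:unique-M} is genuinely needed is also apt.
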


\begin{notation}\hfil
\begin{itemize}
\item If $A$ is a set, then by saying that an elementary model $M$ contains $A$ we mean that $A\in M$.

\item If $(X,\rho)$ is a metric space (resp. $(X, +, \cdot, \| \cdot \|)$ is a normed linear space)
and $M$ is an elementary submodel, then by saying {\em $M$ contains $X$} (or by writing $X\in M$) we mean that $(X,\rho) \in M$ (resp.
$(X, +, \cdot, \| \cdot \|) \in M$).

\item If $X$ is a topological space and $M$ is an elementary submodel, then we denote by $X_M$ the set $\ov{X\cap M}$.
\end{itemize}
\end{notation}

\begin{proposition}
For any suitable elementary submodel $M$ the following holds.
\begin{itemize}
\item[(i)] If $X$ is a metric space then whenever $M$ contains $X$, it is true that
\[
\forall r \in \er_+ \cap M\; \forall x \in X \cap M\colon U(x,r) \in M.
\]

\item[(ii)] If $X$ is a normed linear space then whenever $M$ contains $X$, it is true that $X_M$  is a closed separable subspace of $X$.
\end{itemize}
\end{proposition}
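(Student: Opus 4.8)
The plan is to deduce both items from Lemma~\ref{L:unique-M} together with the listed closure properties of suitable submodels, using the freedom in the ``suitable elementary submodel'' convention to throw in, as parameters of a countable set $Y$, whatever finitely many definable objects are needed (namely the rationals, the zero vector, and the algebraic operations), and to include in the list $\phi_1,\dots,\phi_n$ the formulas that pin those objects down uniquely.

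For item (i): recall that ``$M$ contains $X$'' means $(X,\rho)\in M$, whence $X\in M$ and $\rho\in M$ by the pairing property of Proposition~(v). The open ball $U(x,r)$ is the unique set $y$ satisfying the formula $\phi(y,x,r,X,\rho)$ expressing ``$y=\{z\in X\setsep \rho(x,z)<r\}$'', and such a $y$ exists (by separation). Since $x\in X\cap M$ and $r\in\er_+\cap M$ are in $M$ by hypothesis, applying Lemma~\ref{L:unique-M} to $\phi$ and $\exists y\colon\phi(y,x,r,X,\rho)$ yields some $u\in M$ with $\phi(u,x,r,X,\rho)$; by uniqueness $u=U(x,r)$, so $U(x,r)\in M$.

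For item (ii): $X_M=\ov{X\cap M}$ is closed because it is a closure, and it is separable because $M$ is countable, so $X\cap M$ is a countable dense subset of $X_M$. The substance is that $X_M$ is a linear subspace. First I would show $X\cap M$ is a $\qe$-linear subspace: for a suitable $M$ we may assume $\qe\in M$ (so $\qe\subset M$ by Proposition~(iv)), $0\in M$, and that the operations $+,\cdot$ belong to $M$ (they are components of the tuple $(X,+,\cdot,\|\cdot\|)\in M$, again by Proposition~(v)). Then for $x,y\in X\cap M$ and $\lambda\in\qe$ we have $(x,y)\in M$ by pairing, hence $x+y=+\bigl((x,y)\bigr)\in M$ by the function property of Proposition~(ii), and similarly $\lambda x\in M$; since also $x+y,\lambda x,0\in X$, the set $X\cap M$ is closed under addition and rational scaling and contains $0$. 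Passing to the closure, $X_M$ is closed under addition by continuity of $+$, and closed under multiplication by an arbitrary real scalar $\lambda$ by approximating $\lambda$ by rationals and using that a convergent sequence is bounded (so scalar multiplication behaves well in the limit). Hence $X_M$ is a closed separable linear subspace of $X$.

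I do not expect a genuine obstacle here: the proof is essentially bookkeeping. The only places needing attention are (a) verifying that all the objects one wants to ``see'' inside $M$ — the metric $\rho$, the vector operations, the scalar $0$, and $\qe$ — really are elements of $M$, which is exactly what Propositions~(ii), (iv), (v) are for, and (b) the one purely analytic remark that the closure of a $\qe$-linear subspace of a normed space is an $\er$-linear subspace, where the boundedness observation is used for real scalar multiplication.
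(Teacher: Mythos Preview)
Your proposal is correct and follows precisely the route the paper indicates: the paper does not actually prove this proposition in the text but simply records that ``all the proofs are based on Lemma~\ref{L:unique-M} and they can be found in \cite[Chapters 2 and 3]{Cuth}'', and your argument is exactly of this type.

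One small cosmetic point: the zero vector of $X$ depends on the space $X$, so it cannot literally be placed in the fixed countable set $Y$ in advance (the convention fixes $Y$ before any particular $X$ is handed to you). This is harmless, since once $(X,+,\cdot,\|\cdot\|)\in M$ you recover $0_X\in M$ either by applying Lemma~\ref{L:unique-M} to the formula characterizing the additive identity, or more simply by noting $0_X = x + (-1)\cdot x$ for any $x\in X\cap M$ (and $X\cap M\neq\emptyset$ by absoluteness of $\exists x\in X$). Everything else goes through as written.
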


\begin{convention}
The proofs in the following text often begin in the same way.
To avoid unnecessary repetitions, by saying ``\emph{Let us fix a $(*)$-elementary submodel $M$ (containing $A_1,\ldots,A_n$)}''
we will understand the following.

\emph{Let us have formulas $\varphi_1,\ldots,\varphi_m$ and a countable set $Y$ such that the elementary submodel
$M \prec(\varphi_1,\ldots,\varphi_m; Y)$ is suitable for all the propositions from \cite{Cuth} and \cite{Cuth-Rmoutil}.
Add to them formulas marked with $(*)$ in all the preceding proofs from this paper and formulas marked with $(*)$ in the proof below and all their subformulas.
Denote such a list of formulas by $\psi_1,\ldots,\psi_k$.
Let us fix a countable set $X$ containing the sets $Y$, $\omega$, $\omega^\omega$, $\omega^{<\omega}$, $\zet$, $\qe$, $\qe_+$, $\er$, $\er_+$,
and all the common operations and relations on real numbers ($+$, $-$, $\cdot$, $:$, $<$).
Fix an elementary submodel $M$ for formulas $\psi_1,\ldots,\psi_k$ containing $X$ (such that $A_1, \ldots, A_n \in M$).}
\end{convention}

Thus, any $(*)$-elementary submodel $M$ is suitable for the results from \cite{Cuth}, \cite{Cuth-Rmoutil} and all the preceding theorems and propositions from this paper, making it possible to use all of these results for $M$.

In order to demonstrate how this technique works, we prove the following lemma which we use later.

\begin{lemma}\label{L:density}
For any suitable elementary submodel $M$ the following holds.
Let $(X,\rho)$ be a metric space and $\F$ be a countable collection of subsets of $X$.
Then whenever $M$ contains $X$ and $\F \subset M$, it is true that
\[
\bigcup \F \text{ is dense in } X \quad \Rightarrow \quad \bigcup \F \cap X_M \text{ is dense in } X_M.
\]
\end{lemma}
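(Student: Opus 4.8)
The plan is to reduce density in $X_M$ to density ``at the points of $X\cap M$'' and then to pull the required approximating points back into $M$ using elementarity.

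First I would observe that, because $X_M=\ov{X\cap M}$, it suffices to prove $X\cap M\subseteq\ov{\bigcup\F\cap X_M}$: taking closures then gives $X_M=\ov{X\cap M}\subseteq\ov{\bigcup\F\cap X_M}$, which is exactly the asserted density. Since the balls $U(x,q)$ with $q\in\qe_+$ form a neighbourhood basis at each point and $\qe_+\subset M$ for any suitable $M$, this in turn reduces to showing: for every $x\in X\cap M$ and every $\varepsilon\in\qe_+$ there is $y\in\bigcup\F\cap X_M$ with $\rho(x,y)<\varepsilon$.

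So fix such $x$ and $\varepsilon$. Density of $\bigcup\F$ in $X$ yields some $F\in\F$ with $F\cap U(x,\varepsilon)\ne\emptyset$. Now I would invoke suitability of $M$: from $\F\subset M$ we get $F\in M$; from $x\in X\cap M$ and $\varepsilon\in\qe_+\subset\er_+\cap M$ together with $M$ containing $X$ we get $U(x,\varepsilon)\in M$; hence $F\cap U(x,\varepsilon)\in M$ by the proposition on finite intersections. As this set is nonempty and lies in $M$, Lemma~\ref{L:unique-M} applied to the formula ``$y\in F\cap U(x,\varepsilon)$'' and its existential closure (for which $M$ is absolute by suitability) produces a point $y\in M$ with $y\in F\cap U(x,\varepsilon)$. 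Since $F\subset X$ this gives $y\in X\cap M\subset X_M$, and since $F\subset\bigcup\F$ we have $y\in\bigcup\F$; thus $y\in\bigcup\F\cap X_M$ and $\rho(x,y)<\varepsilon$, which finishes the reduction and hence the lemma.

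I do not expect a real obstacle, the lemma being meant to illustrate the technique. The only point requiring care is the bookkeeping of elementarity: making sure that the handful of facts used about $M$ — that $U(x,r)\in M$ for $r\in\er_+\cap M$ and $x\in X\cap M$, that $A\cap B\in M$ for $A,B\in M$, and that a nonempty set lying in $M$ has a member in $M$ — are all covered by the notion of a ``suitable'' (resp. $(*)$-)elementary submodel, which is precisely what the preceding conventions are designed to absorb. One should also not forget the trivial inclusions $F\subset X$ and $F\subset\bigcup\F$, which are what place the extracted point $y$ in $\bigcup\F\cap X_M$.
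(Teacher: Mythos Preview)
Your proof is correct and follows essentially the same route as the paper's: reduce density in $X_M$ to hitting balls $U(x,\varepsilon)$ with $x\in X\cap M$ and $\varepsilon\in\qe_+$, pick $F\in\F\subset M$ meeting such a ball, and use elementarity to extract a witness $y\in M$. The only cosmetic difference is that the paper applies Lemma~\ref{L:unique-M} directly to the formula $\exists y\,(y\in F\wedge\rho(x,y)<r)$, whereas you first assemble $F\cap U(x,\varepsilon)\in M$ via the closure properties and then pull out a member; both are equally valid invocations of the same machinery.
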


\begin{proof}
Let us fix a $(*)$-elementary submodel $M$ containing $X$ such that $\F \subset \mathcal{P}(X) \cap M$ and $\bigcup \F$ is dense in $X$.
In order to see that $\bigcup\F\cap X_M$ is dense in $X_M$, it is sufficient to prove that, for every $x\in X\cap M$ and $r \in \qe_+$,
there exists $F \in \F$ such that $U(x,r) \cap X_M \cap F \neq \emptyset$. Fix some $x \in X \cap M$ and $r \in \qe_+$.
Then there exists $F\in\F$ such that the following formula is satisfied
\begin{equation*}\tag{$*$}
\exists y \colon (y \in F \; \wedge \; \rho(x,y) < r).
\end{equation*}
The preceding formula has free variables $F$, $\rho$, $<$, $x$, and $r$.
Those are in $M$; hence, by Lemma \ref{L:unique-M}, there exists $y\in M$ such that $y \in F$ and $\rho(x,y) < r$.
Consequently, $U(x,r) \cap X_M \cap F \neq \emptyset$.
\end{proof}

\section{Foran scheme}\label{S:Foran}

We employ the following notation. Given $s,t \in \S$, we write $s \prec t$ if $t$ is an extension of $s$ (not necessarily proper).
We denote the concatenation of $s \in \S$ and $t \in \S$ by $s \h t$. If $s \in \S$ and $i \in \omega$, we write
$s\h i$ instead of $s\h (i)$. If $\nu = (\nu_0, \nu_1,\nu_2, \dots ) \in \omega^{\omega}$ and $n \in \omega$, then  the
symbol $\nu|n$ means the finite sequence $(\nu_0,\nu_1,\dots,\nu_{n-1})$. If $t \in \S$, then the symbol
$|t|$ denotes the length of $t$. By a \emph{tree} we mean any subset $T$ of $\S$ such that
for every $s \in \S$ and $t \in T$ with $s \prec t$, we have $s \in T$. We say that a tree $T$ is \emph{pruned} if for every $t \in T$
there exists $n \in \omega$ such that $t \h n \in T$.

Let us recall that any family $\mathcal A = \{A(s)\setsep s \in \S\}$ of sets is called a \emph{Suslin scheme} and
\emph{Suslin operation} $\mathcal S$ is defined by
\[
\mathcal S(\mathcal A) = \mathcal S_s(A(s)) = \bigcup_{\nu \in \omega^{\omega}}\bigcap_{n \in \omega} A(\nu|n).
\]
A Suslin scheme $\{A(s)\setsep s \in \S\}$ is called \emph{monotone} if $A(s) \supset A(t)$ whenever $s,t \in \S, s \prec t$.
Finally, a subset $A$ of a topological space $X$ is a \emph{Suslin} set (in $X$) if there exists a Suslin scheme $\mathcal A$
consisting of closed subsets of $X$ with $\mathcal S(\mathcal A) = A$.

\begin{setting}
Let $X$ be a complete metric space, $\P$ be a porosity-like relation on $X$, and let $\mathcal B$ be a basis of open sets in $X$.
\end{setting}

\begin{definition}
For any $A \subset X$ we define the following set operators:
\begin{align*}
\ker_{\P}(A) &= A \setminus \bigcup \{U \subset X \setsep U \text{ is open and $U \cap A$ is $\sigma$-$\P$-porous}\}, \\
N_{\P}(A)    &= \{x \in A\setsep \neg \P(x,A)\}.
\end{align*}
\end{definition}

The following lemma is easy to prove. Its assertions (i) and (ii) can be found, e.g., in \cite{Rmoutil}.

\begin{lemma}\label{L:basic}
Let $A \subset X$. Then we have
\begin{itemize}
\item[(i)]   $A \setminus \ker_{\P}(A)$ is $\sigma$-$\P$-porous,

\item[(ii)]  $\ker_{\P}(\ker_{\P}(A)) = \ker_{\P}(A)$,

\item[(iii)] if $A \subset X$ is a Suslin set then $\ker_{\P}(A)$ is a Suslin set,

\item[(iv)]  if $A \subset B \subset X$, $\ker_{\P}(B) = B$, and $B \setminus A$ is $\sigma$-$\P$-porous, then $\ker_{\P}(A) = A$,

\item[(v)]   $A \setminus N_{\P}(A)$ is $\P$-porous.
\end{itemize}
\end{lemma}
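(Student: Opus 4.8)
The plan is to verify the five assertions of Lemma~\ref{L:basic} essentially directly from the definitions and the axioms (i)--(iii) of a porosity-like relation, treating them in an order that lets later parts reuse earlier ones.

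First I would prove (v), which is immediate: if $x \in A \setminus N_{\P}(A)$ then by definition of $N_{\P}(A)$ we have $\P(x,A)$, and since this holds for every such $x$, the set $A \setminus N_{\P}(A)$ is $\P$-porous by definition. Next, for (i), I would observe that $A \setminus \ker_{\P}(A) = A \cap \bigcup\{U \setsep U \text{ open}, U \cap A \text{ is } \sigma\text{-}\P\text{-porous}\}$. Since $X$ is a metric space it is hereditarily Lindel\"of (second countable on each separable piece is not enough in general, but metric spaces are indeed hereditarily Lindel\"of via paracompactness, or one can just use the fixed basis $\mathcal B$ and note that any union of basic open sets from $\mathcal B$ is a countable union of them because $\mathcal B$ need not be countable---so instead I would use that metric spaces are hereditarily Lindel\"of). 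Thus the union defining the complement of the kernel can be taken over countably many open sets $U_n$ with each $U_n \cap A$ $\sigma$-$\P$-porous, whence $A \setminus \ker_{\P}(A) = \bigcup_n (U_n \cap A)$ is a countable union of $\sigma$-$\P$-porous sets, hence $\sigma$-$\P$-porous. For (ii), the inclusion $\ker_{\P}(\ker_{\P}(A)) \subset \ker_{\P}(A)$ is trivial; conversely, if $x \in \ker_{\P}(A)$ but $x \notin \ker_{\P}(\ker_{\P}(A))$, then some open $U \ni x$ has $U \cap \ker_{\P}(A)$ $\sigma$-$\P$-porous; combining with (i), $U \cap A = (U \cap \ker_{\P}(A)) \cup (U \cap (A \setminus \ker_{\P}(A)))$ is $\sigma$-$\P$-porous, contradicting $x \in \ker_{\P}(A)$.

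For (iv), suppose $A \subset B$, $\ker_{\P}(B) = B$, and $B \setminus A$ is $\sigma$-$\P$-porous. If $\ker_{\P}(A) \neq A$, pick $x \in A \setminus \ker_{\P}(A)$ and an open $U \ni x$ with $U \cap A$ $\sigma$-$\P$-porous. Then $U \cap B = (U \cap A) \cup (U \cap (B \setminus A))$ is $\sigma$-$\P$-porous, so $x \notin \ker_{\P}(B) = B$, contradicting $x \in A \subset B$. Finally, (iii) is the genuinely nontrivial descriptive-set-theoretic part: one writes $X = \bigcup_n V_n$ where $\{V_n\}$ is a countable family of open sets such that $V_n \cap A$ is $\sigma$-$\P$-porous whenever \emph{some} open set containing a given point witnesses non-membership in the kernel---more precisely, using hereditary Lindel\"ofness one fixes a countable subfamily $\{U_n\}$ of the open sets appearing in the definition of $\ker_{\P}$ whose union equals $\bigcup\{U \setsep U\cap A \text{ is }\sigma\text{-}\P\text{-porous}\}$, and then $\ker_{\P}(A) = A \setminus \bigcup_n U_n = A \cap \bigcap_n (X \setminus U_n)$. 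Since $A$ is Suslin and each $X \setminus U_n$ is closed (hence Suslin), and the class of Suslin sets is closed under countable intersections, $\ker_{\P}(A)$ is Suslin. The main obstacle here is making sure the countable reduction of the union is legitimate: this rests on the fact that metric spaces are hereditarily Lindel\"of, so that an arbitrary union of open subsets of $X$ is already the union of countably many of them. Once that is in hand, the closure of the Suslin class under countable intersections (standard, and clear from the definition via monotone Suslin schemes together with diagonalization) finishes the argument.
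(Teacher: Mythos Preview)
The paper itself does not spell out a proof (it calls the lemma ``easy'' and refers to \cite{Rmoutil} for (i) and (ii)), so there is little to compare against directly; nevertheless your argument contains a genuine gap. Your proofs of (i) and (iii) rest on the claim that metric spaces are hereditarily Lindel\"of. This is false: for metrizable spaces, Lindel\"of, second countable, and separable are all equivalent, and the Setting of the paper allows $X$ to be an arbitrary complete metric space---indeed the whole point of the paper is the non-separable case. Your parenthetical remark that paracompactness yields hereditary Lindel\"ofness is simply wrong; paracompactness gives locally finite (equivalently $\sigma$-discrete) open refinements, not countable subcovers.

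For (iii) this causes no damage, because the detour through a countable subfamily is unnecessary: the set $V=\bigcup\{U\text{ open}\setsep U\cap A\text{ is }\sigma\text{-}\P\text{-porous}\}$ is itself open, so $\ker_{\P}(A)=A\cap(X\setminus V)$ is the intersection of a Suslin set with a single closed set, hence Suslin. For (i), however, the gap is real, and the standard repair uses paracompactness together with the \emph{locality} axiom of $\P$. By Stone's theorem the cover $\{U\setsep U\cap A\text{ is }\sigma\text{-}\P\text{-porous}\}$ of $V$ has a $\sigma$-discrete open refinement $\bigcup_n\mathcal W_n$. Each $W\in\mathcal W_n$ lies in some $U$ with $U\cap A$ $\sigma$-$\P$-porous, so $W\cap A=\bigcup_k P_{W,k}$ with each $P_{W,k}$ $\P$-porous. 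For fixed $n,k$ the set $Q_{n,k}:=\bigcup_{W\in\mathcal W_n}P_{W,k}$ is again $\P$-porous: given $x\in P_{W,k}$, discreteness of $\mathcal W_n$ provides $r>0$ with $Q_{n,k}\cap U(x,r)=P_{W,k}\cap U(x,r)$, and axioms (i)--(ii) of a porosity-like relation then upgrade $\P(x,P_{W,k})$ to $\P(x,Q_{n,k})$. Hence $A\setminus\ker_{\P}(A)=\bigcup_{n,k}Q_{n,k}$ is $\sigma$-$\P$-porous. (A minor point on (v): from $x\in A\setminus N_{\P}(A)$ you get $\P(x,A)$, but you need $\P(x,A\setminus N_{\P}(A))$; invoke the monotonicity axiom (i) of $\P$ to close this.)
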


\begin{definition}\label{D:Foran-scheme}
Let $\F = \{S(t)\setsep \ t \in \omega^{<\omega}\}$ be a system of nonempty subsets of $X$
such that for each $t\in\omega^{<\omega}$ and each $k \in \omega$ we have
\begin{itemize}
\item[(i)] $\bigcup_{j\in\omega} S(t \h j)$ is a dense subset of $S(t)$,

\item[(ii)] $S(t)$ is $\P$-porous at no point of $S(t \h k)$,

\item[(iii)] for any $\nu\in \omega^\omega$ and any sequence $\{G_n\}_{n\in\omega}$ of sets from $\mathcal B$ satisfying:
\begin{itemize}
	\item[(a)] $\lim_{n \to \infty} \diam(G_n) = 0$,

	\item[(b)] $\overline{G_{n+1}} \subset G_n$ for every $n \in \omega$,

	\item[(c)] $S(\nu |n) \cap G_n \neq \emptyset$ for every $n \in \omega$,
\end{itemize}
we have
\[
\bigcap_{n\in\omega} \bigl(S(\nu |n) \cap G_n\bigr) \neq \emptyset.
\]
\end{itemize}
Then we say that $\F$ is a $(\mathcal B,\P)$\emph{-Foran scheme in} $X$.
If there is no danger of confusion we will say just \emph{Foran scheme}.
\end{definition}	

\begin{lemma}\label{L:Foran-lemma}
Let $\F$ be a Foran scheme in $X$. Then each element of $\F$ is a non-$\sigma$-$\P$-porous set.
\end{lemma}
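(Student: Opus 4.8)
The plan is to argue by contradiction: suppose $S(\emptyset)$ (hence by (i) all the $S(t)$) is $\sigma$-$\P$-porous, write $S(\emptyset) \subset \bigcup_{m} P_m$ with each $P_m$ being $\P$-porous, and construct recursively a branch $\nu \in \omega^\omega$ together with a decreasing sequence of basic open sets $G_n \in \mathcal B$ with vanishing diameters, meeting $S(\nu|n)$, so that the point in $\bigcap_n (S(\nu|n) \cap G_n)$ produced by axiom (iii) lies outside every $P_m$ — contradicting the covering. The key observation is that if $A$ is $\P$-porous and $B$ is $\P$-porous at no point of some nonempty set $C$, then by the defining properties (i)--(iii) of a porosity-like relation one can find a point $x \in C$ and a basic open set $G \ni x$ with $G \cap A = \emptyset$: indeed for $x \in C$ we have $\neg\P(x, B)$, in particular (using monotonicity (i) of $\P$) $\neg \P(x, C \cap \overline{A})$ — wait, more carefully, the right statement is that since $A$ is $\P$-porous, for every $a \in A$ we have $\P(a,A)$, and since $B$ is porous at no point of $C$ we have $\neg\P(c,B)$ for $c \in C$; the set we need to separate from is $\overline{A}$, and property (ii) of $\P$ lets us shrink to a ball. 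Let me restructure this.

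First I would isolate the following \emph{separation step}: given $t \in \S$, an index $m$, and a nonempty relatively open piece $V \cap S(t\h k)$ (with $V \in \mathcal B$), there exist $j \in \omega$, a point $x \in S(t\h k \h \cdots)$ deep enough, and $G \in \mathcal B$ with $\overline G \subset V$, $\diam G$ as small as we like, $S(\text{that node}) \cap G \neq \emptyset$, and $G \cap P_m = \emptyset$. To see this: pick any $x_0 \in V \cap S(t\h k)$; since $S(t)$ is $\P$-porous at no point of $S(t\h k)$ we only control $P_m$, not $S$. Use instead that $P_m$ is $\P$-porous: if $x_0 \in P_m$ then $\P(x_0, P_m)$; but we want a point where $P_m$ is thin. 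The correct device: since $S(t\h k) \not\subset P_m$ is not automatic, we pass one level down. By density (i), $\bigcup_j S(t\h k\h j)$ is dense in $S(t\h k)$, so some $S(t\h k \h j)$ meets $V$. Now $S(t\h k)$ is $\P$-porous at no point of $S(t\h k \h j)$ by (ii), hence for $x \in V \cap S(t\h k\h j)$ we have $\neg\P(x, S(t\h k))$; combined with $\P(a,P_m)$ for $a\in P_m$ and the fact that $\P$ is monotone and ball-local, if $x \in P_m$ then $\P(x,P_m)$ would have to be ``compatible'' — this still does not immediately give separation. The cleaner route, which I would actually take, is: the set $P_m$ being $\P$-porous means each of its points is a point of $\P$-porosity; choose $x$ in the dense set $\bigcup_j S(t\h k\h j)$ inside $V$ with $x \notin P_m$ whenever possible, and when $x \in P_m$ use $\P(x,P_m)$ together with property (iii) ($\P(x,P_m)\Leftrightarrow\P(x,\overline{P_m})$) and property (ii) to find a ball $U(x,r)$, then shrink it to $G \in \mathcal B$; the porosity-like axioms are exactly what guarantee that $\P(x,\overline{P_m})$ delivers a small basic open set essentially missing $P_m$ in the relevant porosity sense, and intersecting it with the next-level dense piece keeps everything nonempty. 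Iterating this over $m = 0,1,2,\dots$ (at stage $n$ handling $P_n$, choosing $\nu_n$, and ensuring $\diam G_n \to 0$ and $\overline{G_{n+1}}\subset G_n$) produces the data required by Definition~\ref{D:Foran-scheme}(iii).

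Then the final step is to invoke axiom (iii) of the Foran scheme: the constructed $\nu$ and $\{G_n\}$ satisfy (a), (b), (c), so $\bigcap_n (S(\nu|n)\cap G_n) \neq\emptyset$; pick $z$ in this intersection. By construction each $G_n$ was chosen to avoid $P_{n}$ (in the appropriate sense forcing $z \notin P_n$), so $z \notin \bigcup_m P_m \supset S(\emptyset) \supset S(\nu|0) \ni z$, a contradiction. Hence no $S(t)$, in particular no element of $\F$, is $\sigma$-$\P$-porous.

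The main obstacle I anticipate is precisely the \emph{separation step}: correctly extracting from ``$P_m$ is $\P$-porous'' plus the non-porosity clause (ii) of the Foran scheme a genuinely small basic open set $G$ that both meets the next $S$-piece and is disjoint from $P_m$ (or thin enough that the limit point escapes $P_m$). This requires careful juggling of the three axioms (i)--(iii) defining a porosity-like relation — monotonicity, ball-locality, and closure-invariance — and is the place where the density assumption (i) and the no-porosity assumption (ii) of the scheme must be combined with some bookkeeping to keep the constructed sequence strictly shrinking with $\overline{G_{n+1}}\subset G_n$ and $\diam G_n \to 0$.
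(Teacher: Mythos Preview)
Your overall framework is right and matches the paper's: assume $S(\emptyset) \subset \bigcup_n A_n$ with each $A_n$ $\P$-porous, build $\nu \in \omega^\omega$ and nested $G_n \in \mathcal B$ with $\diam G_n \to 0$ meeting $S(\nu|n)$ and avoiding $A_n$, then invoke axiom (iii) to produce a point of $S(\emptyset) \setminus \bigcup_n A_n$.

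The gap is exactly where you suspect it, the separation step, and your attempted resolution does not work. You write that ``$\P(x,\overline{P_m})$ delivers a small basic open set essentially missing $P_m$'' --- but this is \emph{not} a consequence of axioms (i)--(iii) for an abstract porosity-like relation. Axiom (ii) says only that porosity is detected on some ball; it never produces an open hole disjoint from $P_m$. Worse, when $P_m$ happens to be dense in the current piece $S(s) \cap G_n$, \emph{no} open subset of $G_n$ meeting $S(s)$ can miss $P_m$, so your goal ``$G$ disjoint from $P_m$'' is in general impossible.

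The idea you are missing is a case split on whether $A_{n+1}$ is dense in $S(s) \cap G_n$ (here $s=\nu|n$). If it is not dense, one simply takes $G_{n+1} \in \mathcal B$ with $\overline{G_{n+1}} \subset G_n \setminus A_{n+1}$ and $G_{n+1} \cap S(s) \neq \emptyset$, then picks $\nu_n$ by Foran axiom (i). If $A_{n+1}$ \emph{is} dense in $S(s)\cap G_n$, one first picks $\nu_n$ by Foran axiom (i) and then observes: any $x \in A_{n+1} \cap S(s) \cap G_n$ satisfies $\P(x, A_{n+1})$, hence $\P(x, \overline{A_{n+1}})$ by porosity axiom (iii); density gives $S(s) \cap G_n \subset \overline{A_{n+1}}$, so monotonicity (i) yields $\P(x, S(s) \cap G_n)$ and then locality (ii) gives $\P(x, S(s))$ --- contradicting Foran axiom (ii) if $x \in S(s\h\nu_n)$. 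Thus $S(s\h\nu_n) \cap G_n \cap A_{n+1} = \emptyset$, and any small $G_{n+1}$ with $\overline{G_{n+1}}\subset G_n$ meeting $S(s\h\nu_n)$ works. Note that the invariant one actually maintains is $S(\nu|n) \cap G_n \cap A_n = \emptyset$, \emph{not} $G_n \cap A_n = \emptyset$; this weaker condition suffices because the point produced by axiom (iii) lies in every $S(\nu|n)$.
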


\begin{proof}
We mimic the standard proof which works for Foran systems, see \cite[Lemma 4.3]{Zajicek-3}.
It is sufficient to prove that $S(\emptyset)$ is non-$\sigma$-$\P$-porous.
Suppose on the contrary that $S(\emptyset) = \bigcup_{n=1}^{\infty} A_n$, where each $A_n$ is $\P$-porous.
We set $A_0 = \emptyset$.
We will construct $\nu = (\nu_0,\nu_1,\dots) \in \omega^{\omega}$ and a sequence of open sets $\{G_n\}_{n \in \omega}$ such that
for every $n \in \omega$ we have
\begin{itemize}
	\item[(a)] $\diam(G_n) < 2^{-n}$,
	\item[(b)] $\overline{G_{n+1}} \subset G_n$,
	\item[(c)] $G_n \cap S(\nu |n) \neq \emptyset$,
    \item[(d)] $S(\nu|n) \cap G_n \cap A_n = \emptyset$,
    \item[(e)] $G_n \in \mathcal B$.
\end{itemize}
We will construct inductively $\nu_n$'s and $G_n$'s.
If $n = 0$, then we find an open set $G_0 \in \mathcal B$ intersecting $S(\emptyset)$ with  $\diam G_0 < 1$.
Then conditions (a)--(e) are clearly satisfied.
Now suppose that we have already constructed $G_n$ and $s = (\nu_0,\dots,\nu_{n-1})$ for $n \in \omega$.
We distinguish two cases.

If $A_{n+1}$ is not dense in $S(s) \cap G_n$, then we find a nonempty open set $G_{n+1} \in \mathcal B$ such that $G_{n+1} \cap S(s) \neq \emptyset$,
$\overline{G_{n+1}} \subset G_n \setminus A_{n+1}$, and $\diam G_{n+1} < 2^{-(n+1)}$. Further, using condition (i) from Definition~\ref{D:Foran-scheme} we find $\nu_n \in \omega$
such that $S(s \h \nu_n) \cap S(s) \cap G_{n+1} \neq \emptyset$.

Now suppose that $A_{n+1}$ is dense in $S(s) \cap G_n$. Then we find $\nu_n$
such that $S(s \h \nu_n) \cap  S(s) \cap G_n \neq \emptyset$ by condition (i) from Definition~\ref{D:Foran-scheme}.
Suppose that $x \in A_{n+1} \cap S(s) \cap G_n$. Then we have $\P(x,A_{n+1})$ and by density of $A_{n+1}$ in $S(s) \cap G_n$ we get
$\P(x,S(s) \cap G_n)$ since $\P$ is a porosity-like relation. It implies $x \notin S(s \h \nu_n)$
by condition (ii) from Definition~\ref{D:Foran-scheme}. Thus we get that $S(s \h \nu_n) \cap G_n \cap A_{n+1} = \emptyset$.
We find an open set $G_{n+1}  \in \mathcal B$ such that $\overline{G_{n+1}} \subset G_n$, $G_{n+1} \cap S(s \h \nu_n) \neq \emptyset$, and
$\diam G_{n+1} < 2^{-(n+1)}$. This finishes the construction of $\nu$ and $G_n$'s.

Since $\F$ is a Foran scheme there exists $x \in \bigcap_{n \in \omega} \bigl(S(\nu|n) \cap G_n\bigr)$.
By (b) and (d) we have $x \in S(\emptyset) \setminus \bigcup_{n=1}^{\infty} A_n = \emptyset$, a contradiction.
\end{proof}

\begin{definition}
A Suslin scheme $\mathcal C = \{C(s) \setsep s \in \S\}$ is \emph{subordinate} to a Suslin scheme $\mathcal A = \{A(s)\setsep  s \in \S\}$
(notation $\mathcal C \sqsubseteq \mathcal A$) if there exists a mapping $\varphi\colon \S \to \S$ such that for each $s \in \S$ we have
\begin{itemize}
\item $|\varphi(s)| = |s|$,

\item if $t \in \S, s \prec t$, then $\varphi(s) \prec \varphi(t)$,

\item $C(s) \subset A(\varphi(s))$.
\end{itemize}
\end{definition}

\begin{definition}\label{D:regular}
Let $\mathcal A = \{A(s)\setsep s \in \S\}$ be a Suslin scheme. Denote $C(s) = \mathcal S_t(A (s \h t))$, $s \in \S$.
We say that $\mathcal A$ is \emph{$\P$-regular} if $\mathcal A$ is monotone and
for every $s \in \S$ we have $\ker_{\P} (C(s)) = C(s) \neq \emptyset$.
\end{definition}

\begin{lemma}\label{L:sub}
Let $\mathcal A$ be a Suslin scheme consisting of closed sets and $C \subset \mathcal S(\mathcal A)$ be a Suslin set.
Then there exists a Suslin scheme $\mathcal C$ consisting of closed sets which is subordinate to $\mathcal A$ and $C = \mathcal S(\mathcal C)$.
\end{lemma}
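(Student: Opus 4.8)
The plan is to produce $\mathcal C$ as a ``coordinatewise intersection'' of a closed Suslin scheme representing $C$ with the scheme $\mathcal A$ itself, exploiting the standard identification $\omega^\omega\cong\omega^\omega\times\omega^\omega$ carried out already at the level of finite sequences. Concretely: since $C$ is a Suslin set, I first fix a Suslin scheme $\mathcal D=\{D(s)\setsep s\in\S\}$ consisting of closed subsets of $X$ with $\mathcal S(\mathcal D)=C$. Next I fix a bijection $p\colon\omega\times\omega\to\omega$, write $q_1,q_2$ for the coordinates of its inverse, and for $u\in\S$ let $u^{(1)},u^{(2)}\in\S$ be the sequences of length $|u|$ defined by $u^{(j)}(i)=q_j(u(i))$. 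The assignment $u\mapsto(u^{(1)},u^{(2)})$ is then a bijection between $\S$ and the set of pairs of finite sequences of equal length; it preserves the extension order in each coordinate; and it induces the usual bijection $\omega^\omega\ni\lambda\mapsto(\mu,\nu)$, under which $(\lambda|n)^{(1)}=\mu|n$ and $(\lambda|n)^{(2)}=\nu|n$ for all $n$. With this in hand I define $\mathcal C=\{C(s)\setsep s\in\S\}$ by $C(s)=D(s^{(1)})\cap A(s^{(2)})$; each $C(s)$ is closed, being an intersection of two closed sets (an empty $C(s)$ is harmless, as the definitions of a Suslin scheme and of subordinateness do not demand nonemptiness).

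It then remains to check the two asserted properties. Subordinateness is immediate from the construction: put $\varphi(s)=s^{(2)}$, so that $|\varphi(s)|=|s|$; if $s\prec t$ then $\varphi(s)\prec\varphi(t)$ because the extension relation is respected in the second coordinate; and $C(s)=D(s^{(1)})\cap A(\varphi(s))\subset A(\varphi(s))$. For $\mathcal S(\mathcal C)=C$, I would compute, for $\lambda\in\omega^\omega$ corresponding to $(\mu,\nu)$, that
\[
\bigcap_{n}C(\lambda|n)=\Bigl(\bigcap_{n}D(\mu|n)\Bigr)\cap\Bigl(\bigcap_{n}A(\nu|n)\Bigr),
\]
and then take the union over all $\lambda\in\omega^\omega$, equivalently over all pairs $(\mu,\nu)$. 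The inclusion $\mathcal S(\mathcal C)\subset\mathcal S(\mathcal D)=C$ is clear since each of these sets is contained in $\bigcap_n D(\mu|n)$. Conversely, given $x\in C$, the equality $C=\mathcal S(\mathcal D)$ yields $\mu$ with $x\in\bigcap_n D(\mu|n)$, and the hypothesis $C\subset\mathcal S(\mathcal A)$ yields $\nu$ with $x\in\bigcap_n A(\nu|n)$; combining these two branches places $x$ in $\mathcal S(\mathcal C)$.

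I do not expect a genuine obstacle here: the whole argument is a direct unwinding of the definition of the Suslin operation once the indexing is right, and the one place that needs a little care is setting up the pairing on finite sequences so that it is simultaneously a bijection, order-preserving in each coordinate, and compatible with passage to infinite branches. (If one later also wanted $\mathcal C$ to be monotone, it would suffice to first replace each $D(s)$ by $\bigcap_{t\prec s}D(t)$ and to normalize $\mathcal A$ in the same way before running the construction; this changes nothing in the argument above.)
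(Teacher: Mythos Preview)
Your proof is correct and is essentially the same as the paper's: both fix a closed Suslin scheme representing $C$, use a coordinatewise pairing bijection $\omega\to\omega\times\omega$ to split each finite sequence into two, and define $C(s)$ as the intersection of the corresponding sets from the two schemes. The only differences are notational (the paper calls the auxiliary scheme $\mathcal L$ rather than $\mathcal D$ and orders the two factors the other way round).
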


\begin{proof}
Let $\mathcal L = \{L(s)\setsep s \in \S\}$ be a Suslin scheme consisting of closed sets with $\mathcal S(\mathcal L) = C$.
Let $\mathcal A = \{A(s)\setsep s \in \S \}$. Fix a bijection $\psi = (\psi_1,\psi_2) \colon \omega \to \omega^2$.
We define mappings $\varphi\colon \S \to \S$, $\rho\colon \S \to \S$ by $\varphi(\emptyset) = \rho(\emptyset) = \emptyset$ and
\begin{align*}
\varphi(s) &= \psi_1(s_0) \h \psi_1(s_1) \h \dots \h \psi_1(s_{|s|-1}), \\
\rho(s) &= \psi_2(s_0) \h \psi_2(s_1) \h \dots \h \psi_2(s_{|s|-1}),
\end{align*}
where $s = (s_0, \dots, s_{|s|-1}) \in \S \setminus \{\emptyset\}$.
We define the desired scheme $\mathcal C$ by $C(s) = A(\varphi(s)) \cap L(\rho(s))$.
The scheme $\mathcal C = \{C(s)\setsep s \in \S\}$ consists of closed sets and is clearly subordinate to $\mathcal A$
via the mapping  $\varphi$.

We verify the equality $C = \mathcal S(\mathcal C)$. If $x \in C$, then there exist $\nu,\mu \in \omega^{\omega}$ such that $x \in L(\nu|k)$ and $x \in A(\mu|k)$ for every $k \in \omega$.
Since $\psi$ is a bijection of $\omega$ onto $\omega^2$ we can find $\tau \in \omega^{\omega}$ such that $\varphi(\tau|k) = \nu|k$ and $\rho(\tau|k) = \mu|k$ for every $k \in \omega$.
Then we have $x \in C(\tau|k)$ for every $k \in \omega$. Consequently, $x \in \mathcal S(\mathcal C)$.

If $x \in \mathcal S(\mathcal C)$, then there exists $\tau \in \omega^{\omega}$ such that $x \in C(\tau|k)$ for every $k \in \omega$.
We find $\mu,\nu \in \omega^{\omega}$ such that $\varphi(\tau|k) = \mu|k$ and  $\rho(\tau|k) = \nu|k$ for every $k \in \omega$.
Then we have $x \in A(\mu|k) \cap L(\nu|k)$. Consequently, $x \in C$. Thus we have proved $C = \mathcal S(C)$.
\end{proof}

\begin{lemma}\label{L:regular}
Let $\mathcal A$ be a Suslin scheme consisting of closed subsets of $X$
and $C \subset \mathcal S(\mathcal A)$ be a Suslin set with $\ker_{\P}(C) = C \neq \emptyset$.
Then there exists a $\P$-regular Suslin scheme $\mathcal L = \{L(s) \setsep s \in \S\}$ consisting of closed subsets of $X$
such that $\mathcal L$ is subordinate to $\mathcal A$ and $\mathcal S(\mathcal L)$ is a dense subset of $C$.
\end{lemma}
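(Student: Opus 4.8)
The plan is to replace $\mathcal A$ by a more convenient scheme and then refine it levelwise, ``kernel‑stabilising'' the tail sets one step at a time. By Lemma~\ref{L:sub} there is a Suslin scheme $\mathcal C=\{C(s):s\in\S\}$ of closed sets with $\mathcal C\sqsubseteq\mathcal A$ and $\mathcal S(\mathcal C)=C$; replacing $C(s)$ by $\bigcap_{t\prec s}C(t)$ we may assume $\mathcal C$ is monotone (this keeps the sets closed, keeps $\mathcal C\sqsubseteq\mathcal A$, and does not change $\mathcal S(\mathcal C)$). For $s\in\S$ put $C^{*}(s)=\mathcal S_{t}(C(s\h t))$; then each $C^{*}(s)$ is Suslin, $C^{*}(\emptyset)=C$, the sets $C^{*}(s)$ are monotone in $s$, $C^{*}(s)\subset C(s)$, and $C^{*}(s)=\bigcup_{j}C^{*}(s\h j)$ by monotonicity of $\mathcal C$.

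Next I would construct, by recursion on the length, nonempty kernel‑stable Suslin sets $E(s)$ together with a length‑preserving monotone map $\theta$ into the tree of $\mathcal C$ so that $E(s)\subset C^{*}(\theta(s))$, $E(s\h j)\subset E(s)$, and $\bigcup_{j}E(s\h j)$ is dense in $E(s)$. Start with $E(\emptyset)=C$, $\theta(\emptyset)=\emptyset$ (recall $\ker_{\P}(C)=C$). Given $E(s)$, put $E_{k}=\ker_{\P}\bigl(E(s)\cap C^{*}(\theta(s)\h k)\bigr)$; this is kernel‑stable and Suslin by Lemma~\ref{L:basic}(ii),(iii). Since $E(s)=\bigcup_{k}\bigl(E(s)\cap C^{*}(\theta(s)\h k)\bigr)$ while each $E(s)\cap C^{*}(\theta(s)\h k)$ differs from $E_{k}$ by a $\sigma$‑$\P$‑porous set (Lemma~\ref{L:basic}(i)), the set $E(s)\setminus\bigcup_{k}E_{k}$ is $\sigma$‑$\P$‑porous; as $\ker_{\P}(E(s))=E(s)$, no open set carries a nonempty $\sigma$‑$\P$‑porous slice of $E(s)$, so $\bigcup_{k}E_{k}$ is dense in $E(s)$ and in particular some $E_{k}$ is nonempty. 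Take the nonempty $E_{k}$ as the children of $s$ and set $\theta$ accordingly; since the resulting subtree of $\S$ is pruned, one may repeat subtrees so that every node acquires exactly $\omega$ children, obtaining such a family indexed by all of $\S$.

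The heart of the proof is to glue the $E(s)$ into a single monotone Suslin scheme $\mathcal L=\{L(s)\}$ of closed sets with $\mathcal L\sqsubseteq\mathcal A$ and $E(s)\subset \mathcal S_{t}(L(s\h t))\subset\overline{E(s)}$ for every $s$. The naive choice $L(s)=\overline{E(s)}$ already gives a monotone closed scheme with $\mathcal L\sqsubseteq\mathcal C\sqsubseteq\mathcal A$ (since $\overline{E(s)}\subset\overline{C^{*}(\theta(s))}\subset C(\theta(s))$), and a Cauchy‑sequence argument using completeness of $X$ and density of the children shows $\mathcal S_{t}(L(s\h t))$ is dense in $\overline{E(s)}$ and contained in it; the missing point is $E(s)\subset\mathcal S_{t}(L(s\h t))$, for which one must arrange that the closures of the children of each node cover that node. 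I would achieve this by interleaving the $\ker_{\P}$‑refinement above with realisations, obtained from Lemma~\ref{L:sub}, of the relevant sets as $\mathcal S$ of closed schemes. Granting the sandwich $E(s)\subset\mathcal S_{t}(L(s\h t))\subset\overline{E(s)}$, the rest is clean: a countable union of kernel‑stable sets is kernel‑stable; $\overline A$ is kernel‑stable whenever $A$ is; and if $E$ is kernel‑stable and $E\subset Y\subset\overline E$ then $Y$ is kernel‑stable (an open set whose trace on $Y$ is $\sigma$‑$\P$‑porous must miss $E$, hence $\overline E$, hence $Y$). The last fact with $E=E(s)$ makes every tail set $\mathcal S_{t}(L(s\h t))$ kernel‑stable, and it is nonempty because it contains $E(s)$; with monotonicity and $\mathcal L\sqsubseteq\mathcal A$ this is exactly $\P$‑regularity. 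Finally $\mathcal S(\mathcal L)=\mathcal S_{t}(L(t))$ lies between $E(\emptyset)=C$ and $\overline{E(\emptyset)}=\overline C$ and is dense in $\overline C$, hence dense in $C$.

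The main obstacle is precisely this gluing step. Because $\ker_{\P}$ strips off a $\sigma$‑$\P$‑porous ``rind'', the children of a node cover it only densely, not exactly, so one must ensure their closures still cover the parent's closure; in a non‑separable $X$ this cannot be done by the usual device of covering by countably many small balls, and has to be wrung out of the Suslin structure together with the completeness of $X$. Verifying that the glued scheme keeps $E(s)\subset\mathcal S_{t}(L(s\h t))$ at every node, stays monotone, and remains subordinate to $\mathcal A$ is the bulk of the work; everything else is routine bookkeeping with Lemma~\ref{L:basic} and the elementary properties of $\ker_{\P}$.
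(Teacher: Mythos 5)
Your construction is sound up to the gluing step, but the gluing step is where the whole difficulty of the lemma sits, and you leave it unresolved. You correctly observe that with $L(s)=\overline{E(s)}$ everything would follow \emph{provided} $E(s)\subset\mathcal S_t(L(s\h t))$, and you propose to secure this by ``interleaving the $\ker_{\P}$-refinement with realisations from Lemma~\ref{L:sub}''. This target is both unobtainable by your construction and unnecessary. It is unobtainable because each child is produced by applying $\ker_{\P}$, which in general strips a nonempty $\sigma$-$\P$-porous rind off $E(s)\cap C^{*}(\theta(s)\h k)$; the union of the children therefore covers $E(s)$ only up to a $\sigma$-$\P$-porous set, and your proposal supplies no mechanism by which the removed points reappear in $\mathcal S_t(L(s\h t))$. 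Since your kernel-stability transfer (``if $E\subset Y\subset\overline E$ and $E$ is kernel-stable then so is $Y$'') genuinely needs the lower inclusion $E\subset Y$, and your nonemptiness claim for the tail sets rests on the same inclusion, the sandwich cannot simply be ``granted''.

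The way out --- and what the paper actually does --- is to aim lower: take as the small set of the sandwich not the parent but $\mathcal S_t\bigl(\ker_{\P}(E(s\h t))\bigr)$, the Suslin operation applied to the kernels of the descendants along all branches. The lower inclusion $\mathcal S_t(\ker_{\P}(E(s\h t)))\subset\mathcal S_t\bigl(\overline{\ker_{\P}(E(s\h t))}\bigr)$ is then automatic, because the Suslin operation is monotone in its arguments, and the burden shifts to showing that this smaller set is still kernel-stable and nonempty. That is done by a computation you never perform: your one-level estimate (that $\ker_{\P}(E(u))\setminus\bigcup_j\ker_{\P}(E(u\h j))$ is $\sigma$-$\P$-porous) must be summed over the countably many nodes $u$ extending $s$ to conclude that $\ker_{\P}(E(s))\setminus\mathcal S_t(\ker_{\P}(E(s\h t)))$ is $\sigma$-$\P$-porous, after which Lemma~\ref{L:basic}(iv) yields kernel-stability and nonemptiness of $\mathcal S_t(\ker_{\P}(E(s\h t)))$, and your own transfer principle handles every set sandwiched between it and its closure. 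Your remaining ingredients --- monotonisation, the pruned tree of nodes with nonempty closures, reindexing so that each node has $\omega$ children, subordination by composing tree maps, and density of $\mathcal S(\mathcal L)$ in $C$ --- match the paper and are fine.
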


\begin{proof}
Let $\mathcal A = \{A(s)\setsep s \in \S\}$.  Using Lemma~\ref{L:sub} we find a Suslin scheme $\mathcal D = \{D(s)\setsep s \in \S\}$
consisting of closed subsets of $X$ which is subordinate to $\mathcal A$ and $\mathcal S(\mathcal D) = C$.
For $s \in \S$ we set
\[
E(s) = \mathcal S_t(D(s \h t)), \qquad
H(s) = \ker_{\P}(E(s)), \qquad
P(s) = \overline{H(s)}, \qquad
\text{and} \qquad Q(s) = \mathcal S_t(P(s \h t)).
\]
We verify that $\ker_{\P}(Q(s)) =  Q(s)$ for every $s \in \S$. For every $u \in \S$ we have
\[
E(u) = \bigcup_{j \in \omega} E(u \h j)
\]
and
\[
H(u) \setminus \bigcup_{j \in \omega} H(u \h j) \subset \Bigl(E(u) \setminus \bigcup_{j \in \omega} E(u \h j)\Bigr) \cup
\bigcup_{j \in \omega} \bigl(E(u \h j)\setminus H(u \h j)\bigr)
= \bigcup_{j \in \omega} \bigl(E(u \h j)\setminus H(u \h j)\bigr).
\]
Since $E(u \h j) \setminus H(u \h j)$ is $\sigma$-$\P$-porous for every $j \in \omega$ (Lemma~\ref{L:basic}(i)), we conclude that
the set $H(u) \setminus \bigcup_{j \in \omega} H(u \h j)$ is $\sigma$-$\P$-porous.
Since for every $s \in \S$ we have
\[
H(s) \setminus \mathcal S_t(H(s\h t)) \subset  \bigcup_{t\in\S} \Bigl(H(s\h t) \setminus \bigcup_{j\in \omega} H(s\h t \h j)\Bigr),
\]
we get that  $H(s) \setminus \mathcal S_t(H(s\h t))$ is $\sigma$-$\P$-porous. Therefore
\begin{equation}\label{E:ker-S-t}
\ker_{\P}(\mathcal S_t(H(s\h t))) = \mathcal S_t(H(s\h t))
\end{equation}
by Lemma~\ref{L:basic}(iv). Further, we get that $\mathcal S_t(H(s\h t))$ is a dense subset of $H(s)$, thus $\overline{\mathcal S_t(H(s\h t))} = \overline{H(s)}$.
Observing
\begin{equation}\label{E:inclusions}
\mathcal S_t(H(s\h t)) \subset \mathcal S_t(P(s\h t)) = Q(s) \subset P(s) = \overline{H(s)} = \overline{\mathcal S_t(H(s\h t))}
\end{equation}
and using \eqref{E:ker-S-t} we get $\ker_{\P}(Q(s)) = Q(s)$. Indeed, fix an open set $U$ with $U\cap Q(s)\neq\emptyset$. Using \eqref{E:inclusions}, $U\cap S_t(H(s\h t))\neq\emptyset$ and, by \eqref{E:ker-S-t}, $U\cap S_t(H(s\h t))$ is not $\sigma$-$\P$-porous set. Hence, $U\cap Q(s)$ is not $\sigma$-$\P$-porous set and, as $U$ was an arbitrary open set intersecting $Q(s)$, $\ker_{\P}(Q(s)) = Q(s)$.

Further, we set $T = \{s \in \S\setsep P(s) \neq \emptyset\}$. The set $T$ is obviously a nonempty tree. Moreover, $T$ is pruned. Indeed,
if $s \in T$, then $H(s) \neq \emptyset$ and thus $E(s)$ is non-$\sigma$-$\P$-porous.
We have $E(s) = \bigcup_{n \in \omega} E(s \h n)$ and therefore there exists $m \in \omega$ such that $E(s \h m)$ is non-$\sigma$-$\P$-porous. Thus $P(s \h m) \neq \emptyset$ and $s\h m \in T$.

We find a mapping $\varphi \colon \S \to \S$ such that for every $s \in \S$ we have
\begin{itemize}
\item $|\varphi(s)| = |s|$,

\item if $t \in \S, s \prec t$, then $\varphi(s) \prec \varphi(t)$,

\item $\{\varphi(s \h n)\setsep n \in \omega\} = \{\varphi(s) \h k \setsep k \in \omega\} \cap T$.
\end{itemize}
We have $\emptyset \in T$ since $T$ is nonempty.
We set $\varphi(\emptyset) = \emptyset$. Suppose that $\varphi(s) \in T$ has been already defined for some $s \in \S$.
The set $W := \{k \in \omega \setsep \varphi(s) \h k \in T\}$ is nonempty since $T$ is pruned.
Thus we can find a mapping $\psi\colon \omega \to \omega$ such that
$\psi(\omega) = W$. We define $\varphi(s \h n) = \varphi(s) \h \psi(n)$.
This finishes the construction of $\varphi$. It is easy to check that the mapping $\varphi$ has all the required properties.

We set $L(s) = P(\varphi(s))$ and $\mathcal L = \{L(s) \setsep s \in \S\}$.
The scheme $\{E(s)\setsep s \in \S\}$ is monotone by definition.
This easily gives that the scheme $\mathcal L$ is also monotone.

By the properties of $\varphi$ and the definition of $T$
we have $\mathcal S_t(L(s \h t)) = Q(\varphi(s)) \neq \emptyset$ for every $s \in \S$.
Indeed, if $x \in Q(\varphi(s))$ for some $s \in \S$, then there exists $\nu \in \omega^{\omega}$ such that
$x \in P(\varphi(s) \h \nu|n)$ for  every $n \in \omega$. Thus $P(\varphi(s) \h \nu|n) \neq \emptyset$
for every $n \in \omega$. This means that $\varphi(s) \h \nu|n \in T$ for every $n \in \omega$.
Using the properties of $\varphi$ we find $\mu \in \omega^{\omega}$ such that $\varphi(s \h \mu|n) = \varphi(s) \h \nu|n$ for every $n \in \omega$.
Thus we have $x \in \bigcap_{n\in\omega}P(\varphi(s) \h \nu|n) = \bigcap_{n\in\omega} L(s \h \mu|n) \subset \mathcal S_t(L(s \h t))$.
If $x \in \mathcal S_t(L(s \h t))$, then there exists $\nu \in \omega^{\omega}$ such that
$x \in L(s \h \nu|n) = P(\varphi(s \h \nu|n))$ for every $n \in \omega$. Using the properties of $\varphi$ again, we get
$x \in \bigcap_{n \in \omega} P(\varphi(s \h \nu|n)) \subset Q(\varphi(s))$.
Finally, if $s \in \S$ we have $H(\varphi(s)) \neq \emptyset$ and $H(\varphi(s)) \setminus \mathcal S_t(H(\varphi(s) \h t))$ is $\sigma$-$\P$-porous.
Thus $\mathcal S_t(H(\varphi(s) \h t)) \neq \emptyset$ and by \eqref{E:inclusions} we get $Q(\varphi(s)) \neq \emptyset$.
Thus $\mathcal L$ is $\P$-regular.

Clearly $\mathcal L \sqsubseteq \mathcal D$. Using the fact that $\mathcal D \sqsubseteq \mathcal A$, we get
$\mathcal L \sqsubseteq \mathcal A$. It remains to verify that $\mathcal S(\mathcal L)$ is dense in $C$.
Since by definition we have $P(s) \subset D(s)$ for every $s \in \S$, we get $Q(\emptyset) \subset E(\emptyset) = C$.
The set $\mathcal S_t(H(t))$ is a dense subset of $H(\emptyset)$. We get by \eqref{E:inclusions} that $Q(\emptyset)$ is a dense subset of $H(\emptyset) = C$. It concludes the proof since $\mathcal S(\mathcal L) = Q(\emptyset)$.
\end{proof}

\begin{proposition}\label{P:Foran-converse}
Suppose that $N_{\P}(A)$ is a Suslin set whenever $A \subset X$ is Suslin.
If $S \subset X$ is a Suslin non-$\sigma$-$\P$-porous set, then there exists a $(\mathcal B,\P)$-Foran scheme $\F$ in $X$
such that each element of $\F$ is a subset of $S$.
\end{proposition}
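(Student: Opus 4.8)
The plan is to build the Foran scheme $\F = \{S(t) : t \in \S\}$ by a recursive construction that produces a $\P$-regular Suslin scheme at each level, applying Lemma~\ref{L:regular} and Lemma~\ref{L:basic}(v) repeatedly. Since $S$ is Suslin, fix a Suslin scheme $\mathcal A_0$ consisting of closed sets with $\mathcal S(\mathcal A_0) = S$. Because $S$ is non-$\sigma$-$\P$-porous, by Lemma~\ref{L:basic}(i) we have $\ker_{\P}(S) \neq \emptyset$, and by Lemma~\ref{L:basic}(ii) it equals its own $\ker_{\P}$; moreover $S \setminus \ker_{\P}(S)$ is $\sigma$-$\P$-porous, so $\ker_{\P}(S)$ is again non-$\sigma$-$\P$-porous. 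We also know $\ker_{\P}(S)$ is Suslin by Lemma~\ref{L:basic}(iii). So we may apply Lemma~\ref{L:regular} to $C := \ker_{\P}(S) \subset \mathcal S(\mathcal A_0)$ to obtain a $\P$-regular Suslin scheme $\mathcal A_1 \sqsubseteq \mathcal A_0$ of closed sets with $\mathcal S(\mathcal A_1)$ dense in $C$ (hence a subset of $S$, and non-$\sigma$-$\P$-porous).

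The heart of the matter is to define $S(t)$ together with, for each $t$, a $\P$-regular Suslin scheme $\mathcal A_t = \{A_t(s) : s \in \S\}$ of closed subsets of $X$, subordinate to $\mathcal A_0$, such that $S(t) = \mathcal S(\mathcal A_t) \subset S$ and $\mathcal A_{t\h j}$ refines $\mathcal A_t$ in the appropriate sense. For the root, set $S(\emptyset) = \mathcal S(\mathcal A_1)$ and $\mathcal A_\emptyset = \mathcal A_1$. Given $t$ with $S(t) = \mathcal S(\mathcal A_t)$ and $\ker_{\P}(S(t)) = S(t) \neq \emptyset$: since $\mathcal A_t$ is $\P$-regular, the sets $C_t(s) := \mathcal S_u(A_t(s\h u))$ satisfy $\ker_{\P}(C_t(s)) = C_t(s) \neq \emptyset$, and these will be used to produce the children. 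The idea is to take a countable dense subset $\{x_j : j \in \omega\}$ of $S(t)$ (possible since $S(t) \subset X$ separable? — no, $X$ need not be separable; instead use the basis $\mathcal B$: enumerate pairs $(B, \text{level data})$ with $B \in \mathcal B$, $B \cap S(t) \neq \emptyset$), and for each $j$ pick a set $E_j \subset S(t) \cap B_j$ that is Suslin, non-$\sigma$-$\P$-porous, with $\ker_{\P}(E_j) = E_j$: namely take a basic open $U(x, r) \subset B_j$ with $r$ small, note $S(t) \cap U(x,r)$ is Suslin and, by Lemma~\ref{L:basic}(iv) applied with $B = S(t)$, satisfies $\ker_{\P}(S(t) \cap U(x,r)) = S(t) \cap U(x,r)$ once we discard the $\sigma$-$\P$-porous part — then apply Lemma~\ref{L:basic}(v) to pass to $N_{\P}$ of a suitable set so that $S(t)$ is $\P$-porous at no point of the child, while keeping the child non-$\sigma$-$\P$-porous; finally apply Lemma~\ref{L:regular} again to get $\mathcal A_{t\h j} \sqsubseteq \mathcal A_t$ with $\mathcal S(\mathcal A_{t\h j})$ dense in that child, and set $S(t\h j) = \mathcal S(\mathcal A_{t\h j})$.

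It then remains to verify conditions (i)--(iii) of Definition~\ref{D:Foran-scheme}. Condition (i) follows because the children $S(t\h j)$ are chosen inside basic open sets forming a $\pi$-base for $S(t)$, so their union is dense in $S(t)$. Condition (ii), that $S(t)$ is $\P$-porous at no point of $S(t\h k)$, is exactly what the $N_{\P}$-step guarantees: $S(t\h k)$ was arranged to be contained in $N_{\P}$ of a set dense in $S(t)$ near the relevant ball, so using that $\P$ is porosity-like (conditions (i)--(iii) in the definition of a porosity-like relation, and density) we get $\neg\P(x, S(t))$ for every $x \in S(t\h k)$; this is where the Suslinness hypothesis on $N_{\P}$ is crucial, since we need $N_{\P}(\cdot)$ of a Suslin set to be Suslin in order to feed it back into Lemma~\ref{L:regular}. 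Condition (iii) is the completeness/Cantor-type property: each $S(\nu|n)$ contains the dense set $\mathcal S(\mathcal A_{\nu|n})$, but more importantly each $A_{\nu|n}(\cdot)$ consists of closed sets and the schemes are nested; a decreasing sequence of closed sets $\overline{G_{n+1}} \subset G_n$ with $\diam \to 0$ in a complete metric space has nonempty intersection $\{x\}$, and one checks using subordination and monotonicity that $x \in \bigcap_n \overline{S(\nu|n)}$; since each $S(\nu|n) \cap G_n \neq \emptyset$ and the $S(\nu|n)$ are non-$\sigma$-$\P$-porous with $\ker_{\P} = $ identity, a diagonal argument threading through the Suslin schemes locates a genuine point of $\bigcap_n (S(\nu|n) \cap G_n)$. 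The main obstacle is organizing the recursion so that all three bookkeeping demands—density of children, the porosity-vanishing condition (ii), and preservation of the Suslin/$\P$-regular structure—hold simultaneously; condition (ii) together with the requirement that children remain non-$\sigma$-$\P$-porous is the delicate point, and it is precisely there that the hypothesis ``$N_{\P}$ of Suslin is Suslin'' does its work.
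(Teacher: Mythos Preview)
Your overall strategy—recursively apply Lemma~\ref{L:regular} to sets of the form $\ker_{\P}(N_{\P}(\cdot))$ so as to manufacture the children and secure condition~(ii)—is exactly the strategy of the paper. There are, however, two problems with the execution.

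First, your plan for condition~(i) is to enumerate basic open sets $B \in \mathcal B$ meeting $S(t)$ and to plant a child in each. But $\mathcal B$ need not be countable (there is no separability hypothesis on $X$), so no such enumeration is available; you noticed that $S(t)$ itself need not be separable, but the proposed fix has the same defect. The paper sidesteps this entirely: it applies Lemma~\ref{L:regular} \emph{once} to $\ker_{\P}(N_{\P}(S(t)))$ relative to the tail scheme at $t$, obtaining a single $\P$-regular scheme whose top-level branches furnish \emph{all} the children $S(t\h j)$ at once; density of $\bigcup_j S(t\h j)$ in $S(t)$ then comes for free from the conclusion of Lemma~\ref{L:regular} together with the observation that $S(t)\setminus\ker_{\P}(N_{\P}(S(t)))$ is $\sigma$-$\P$-porous while $\ker_{\P}(S(t))=S(t)$.

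Second—and this is the genuine gap—your verification of condition~(iii) is not an argument. Completeness gives a single point $x$ with $\{x\}=\bigcap_n G_n$, and it is easy to see $x\in\overline{S(\nu|m)}$ for each $m$; but $S(\nu|m)$ is a Suslin set, not a closed set, so membership in the closure is not enough. Your ``diagonal argument threading through the Suslin schemes'' does not explain how to produce a branch of $\mathcal A_{\nu|m}$ witnessing $x\in\mathcal S(\mathcal A_{\nu|m})$: the witnessing branches for the approximating points $y_n$ depend on $n$ and need not cohere. The paper solves this with a specific device. Rather than a scheme per node, it builds a single sequence of $\P$-regular schemes $(\mathcal A_n)_{n\in\omega}$, one per \emph{level}, with $\mathcal A_{n+1}\sqsubseteq\mathcal A_n$ via maps $\varphi_{n+1}$ satisfying the freezing condition $\varphi_{n+1}(s)=s$ for $|s|\le n$; one then sets $S(s):=C^{|s|}(s)=\mathcal S_t(A^{|s|}(s\h t))$. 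The freezing condition forces the strings $\varphi_{m+1}\circ\dots\circ\varphi_{k+1}(\nu|k)$, $k\ge m$, to form a single coherent branch extending $\nu|m$, along which all the (closed) sets $A^m(\cdot)$ contain $x$; this is precisely what yields $x\in S(\nu|m)$. Without an analogue of this freezing mechanism, your node-indexed construction does not deliver~(iii).
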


\begin{proof}
We will construct a sequence $(\mathcal A_n)_{n \in \omega}$ of $\P$-regular Suslin schemes consisting of closed sets,
where $\mathcal A_n = \{A^n(s)\setsep s \in \S\}$. For $s \in \S$ we denote $C^n(s) = \mathcal S_t (A^n(s \h t))$ and, for every $n \in \omega$, we  require
\begin{itemize}
\item $\mathcal S (\mathcal A_0) \subset S$,

\item $\mathcal A_{n+1} \sqsubseteq \mathcal A_n$ and this fact is witnessed by a mapping $\varphi_{n+1}\colon \S \to \S$,

\item $\varphi_{n+1}(s) = s$ for every $s \in \S, |s| \leq n$.
\end{itemize}

Using Lemma~\ref{L:regular} we find a $\P$-regular Suslin scheme $\mathcal A_0$ consisting of closed sets
with $\mathcal S (\mathcal A_0) \subset \ker_{\P}(S) \subset S$.
Suppose that $n \in \omega, n > 0$, and we have already constructed the desired schemes $\mathcal A_j$, $j < n$.
Fix $s \in \omega^{n-1}$. The set $\ker_{\P}(N_{\P}(C^{n-1}(s)))$ is nonempty by $\P$-regularity of $\mathcal A_{n-1}$.
This set is also Suslin by the assumption and Lemma~\ref{L:basic}(iii).
By Lemma~\ref{L:regular} we find a $\P$-regular Suslin scheme $\mathcal L_s = \{L^s(t) \setsep t \in \S\}$
such that $\mathcal S(\mathcal L_s)$ is a dense subset of $\ker_{\P}(N_{\P}(C^{n-1}(s)))$
and $\mathcal L_s \sqsubseteq \{A^{n-1}(s \h t)\setsep t \in \S\}$. The last fact is witnessed by a mapping $\varphi^s_n \colon \S \to \S$.

For $t = (t_0,\dots,t_{|t|-1}) \in \S$, we set
\begin{align*}
A^{n}(t) &=
\begin{cases}
A^{n-1}(t), & |t| < n, \\
L^{t|(n-1)}(t_{n-1},\dots,t_{|t|-1}), & |t| \geq n;
\end{cases}
\\
\varphi_n(t) &=
\begin{cases}
t, & |t| < n, \\
t|(n-1) \h \varphi_n^{t|(n-1)}(t_{n-1},\dots,t_{|t|-1}), & |t| \geq n.
\end{cases}
\end{align*}
Further, we set $\mathcal A_n = \{A^n(t)\setsep t \in \S\}$.

We define $S(s) = C^{|s|}(s)$, $s \in \S$, and $\F = \{S(s)\setsep s \in \S\}$.
We verify the conditions (i)--(iii) from Definition~\ref{D:Foran-scheme}.

(i) Let $n \in \omega$ and $s \in \omega^n$. Since $\ker_{\P}(C^n(s)) = C^n(s)$
we have that the set $\ker_{\P}(N_{\P}(C^n(s)))$ is dense in $C^n(s)$.
Since $\mathcal S(\mathcal L_s)$ is dense in $\ker_{\P}(N_{\P}(C^n(s)))$, we get that $\mathcal S(\mathcal L_s)$ is dense
in $C^n(s)$. Thus we have that $\bigcup_{j \in \omega} S(s \h j) = \bigcup_{j \in \omega} C^{n+1}(s \h j) = C^{n+1}(s) = \mathcal S(\mathcal L_s)$
is a dense subset of $S(s) = C^n(s)$.

(ii) We have $S(t \h k) = C^{|t|+1}(t \h k)  \subset N_{\P}(C^{|t|}(t)) = N_{\P}(S(t))$ for every $t \in \S$ and $k \in \omega$.

(iii) Suppose that we have $\nu \in \omega^{\omega}$ and a sequence $\{G_n\}_{n\in\omega}$ of open sets such that
\begin{itemize}
	\item[(a)] $\lim_{n \to \infty} \diam(G_n) = 0$,
	\item[(b)] $\overline{G_{n+1}} \subset G_n$ for every $n \in \omega$,
	\item[(c)] $S(\nu |n) \cap G_n \neq \emptyset$ for every $n \in \omega$.
\end{itemize}

We have that $\bigcap_{n \in \omega} G_n = \{x\}$ for some $x \in X$, since $X$ is complete.
Our task is to show that $x \in S(\nu|m)$ for every $m \in \omega$.
Fix $m \in \omega$. For each $k \in \omega$ there exists $y_k \in S(\nu|k) \cap G_k$. We have $\lim y_k = x$. Fix $k \in \omega$, $k \geq m$.
For every $n \in \omega, n > k$, we have
\begin{equation}\label{E:inclusions2}
y_n \in S(\nu|n) = C^n(\nu|n) \subset A^n(\nu|n) \subset A^{n-1}(\varphi_n(\nu|n)) \subset \dots \subset A^m(\varphi_{m+1} \circ \dots \circ \varphi_n(\nu|n)).
\end{equation}
Since $\mathcal A_m$ is a $\P$-regular scheme, it is monotone. Using this fact and \eqref{E:inclusions2} we get
\begin{equation}\label{E:inclusions3}
y_n \in A^m(\varphi_{m+1} \circ \dots \circ \varphi_n(\nu|n))
\subset A^m(\varphi_{m+1} \circ \dots \circ \varphi_n(\nu|k)).
\end{equation}
Since $\varphi_j(\nu|k) = \nu|k$ for every $j \in \omega, j > k$, we get
\[
A^m(\varphi_{m+1} \circ \dots \circ \varphi_n(\nu|k)) = A^m(\varphi_{m+1} \circ \dots \circ \varphi_{k+1}(\nu|k)).
\]
Using this and \eqref{E:inclusions3} we get $x \in A^m(\varphi_{m+1} \circ \dots \circ \varphi_{k+1}(\nu|k))$
since the latter set is closed. Since $\nu|m \prec \varphi_{m+1} \circ \dots \circ \varphi_{k+1}(\nu|k)$ we can conclude $x \in S(\nu|m)$.
\end{proof}

\section{Porosity-like relations}\label{S:porosity-like}

\begin{definition}
Let $X$ be a metric space and ${\bf R}$ be a point-set relation on $X$ (i.e. ${\bf R}\subseteq X\times \mathcal{P}(X)$).
Let $M$ be a set and ${\bf R}'$ be a point-set relation on $X_M = \ov{X\cap M}$.
We say that the set $M$ is a \emph{pointwise $({\bf R}\to {\bf R}')$-model} if
\[
\forall A \in \mathcal{P}(X) \cap M \ \forall x \in X_M \colon  \bigl({\bf R}(x,A)\;\to\; {\bf R}'(x,A\cap X_M)\bigr).
\]
Similarly, we define the notion of a \emph{pointwise $({\bf R}\leftrightarrow {\bf R}')$-model}.
\end{definition}

\begin{definition}
Let $X$ be a metric space and $\P$ be a porosity-like relation on $X$.
Let $M$ be a set and $\P'$ be a porosity-like relation on $X_M = \ov{X\cap M}$.
We say that the set $M$ is a \emph{$(\P \to \P')$-model} if for every set $A\in \mathcal{P}(X)\cap M$
\[
A \text{ is $\P$-porous in the space } X\;\to\;A \cap X_M \text{ is $\P'$-porous in the space }X_M.
\]
We say that the set $M$ is a \emph{$(\sigma$-$\P \to \sigma$-$\P')$-model} if for every set $A\in \mathcal{P}(X)\cap M$
\[
A \text{ is $\sigma$-$\P$-porous in the space } X\;\to\;A \cap X_M\text{ is $\sigma$-$\P'$-porous in the space }X_M.
\]
Similarly, we define the notion of \emph{$(\P \leftarrow \P')$-model}, \emph{$(\P \leftrightarrow \P')$-model},
and \emph{$(\sigma$-$\P \leftrightarrow \sigma$-$\P')$-model}.
\end{definition}

\begin{proposition}\label{P:porosity-like}
For any suitable elementary submodel $M$ the following holds.
Let $X$ be a metric space, $\P$ be a porosity-like relation on $X$ and $\P'$ be a porosity-like relation on $X_M$.
Assume $M$ contains $X$ and $\P$.
\begin{itemize}
\item[(i)]   If $M$ is a pointwise $(\P \to \P')$-model, then $M$ is a $(\P \rightarrow \P')$-model.
\item[(ii)]  If $M$ is a pointwise $(\neg \P \to \neg \P')$-model, then $M$ is a $(\P \leftarrow \P')$-model.
\item[(iii)] If $M$ is a $(\P \to \P')$-model, then $M$ is a $(\sigma$-$\P \to \sigma$-$\P')$-model.
\end{itemize}
In particular, if $M$ is a pointwise $(\P \leftrightarrow \P')$-model, then $M$ is a $(\P  \leftrightarrow \P')$-model and a $(\sigma$-$\P \to \sigma$-$\P')$-model.
\end{proposition}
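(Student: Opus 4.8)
The plan is to reduce all three items to a single reflection principle. First I would fix a $(*)$-elementary submodel $M$ containing $X$ and $\P$, adding to the list of formulas (marking with $(*)$) the formula ``$A$ is $\P$-porous in $X$'' together with all its subformulas — in particular the one decoding the ordered pair $(x,A)$ and testing $(x,A)\in\P$ — and likewise the formula ``$A$ is $\sigma$-$\P$-porous in $X$'' and the existential statement about its witnessing decomposition. Then I claim that for every $A\in\mathcal P(X)\cap M$,
\[
A\text{ is }\P\text{-porous in }X\quad\Longleftrightarrow\quad \P(x,A)\text{ for every }x\in A\cap M.
\]
Indeed, ``$A$ is $\P$-porous'' is by definition just $\forall x\,(x\in A\to\P(x,A))$, a formula whose only parameters are $A$ and $\P$; both lie in $M$, and for $x\in A\cap M$ the pair $(x,A)$ also lies in $M$, so $\P(x,A)$ is absolute for such $x$. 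Relativizing the whole formula to $M$ therefore turns it into the right-hand side of the displayed equivalence, and absoluteness of ``$A$ is $\P$-porous'' closes the loop; the left-to-right direction is trivial anyway.

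Granting this, items (i) and (ii) are short. For (i): if $A\in\mathcal P(X)\cap M$ is $\P$-porous in $X$, then $\P(y,A)$ holds for every $y\in A$, in particular for every $y\in A\cap X_M$; since $A\in\mathcal P(X)\cap M$ and $y\in X_M$, the pointwise $(\P\to\P')$-model hypothesis yields $\P'(y,A\cap X_M)$, so $A\cap X_M$ is $\P'$-porous in $X_M$. (This item does not even use the reflection principle.) For (ii): assume $A\cap X_M$ is $\P'$-porous in $X_M$ and take any $x\in A\cap M$; then $x\in X\cap M\subset\overline{X\cap M}=X_M$, hence $x\in A\cap X_M$ and so $\P'(x,A\cap X_M)$. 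The contrapositive of the pointwise $(\neg\P\to\neg\P')$-model hypothesis gives $\P(x,A)$, and since $x\in A\cap M$ was arbitrary the reflection principle shows $A$ is $\P$-porous in $X$.

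For (iii), suppose $M$ is a $(\P\to\P')$-model and $A\in\mathcal P(X)\cap M$ is $\sigma$-$\P$-porous in $X$. I would apply Lemma~\ref{L:unique-M} to the formula ``$f$ is a function with $\dom f=\omega$, $\bigcup\rng f=A$, and $f(n)$ is $\P$-porous in $X$ for every $n\in\omega$'' — which is satisfied in $V$ by some $f$ because $A$ is $\sigma$-$\P$-porous — to obtain such an $f\in M$. Since $\omega\subset M$, each $A_n:=f(n)$ lies in $M$; it is contained in $A\subset X$, so $A_n\in\mathcal P(X)\cap M$, and it is $\P$-porous in $X$, whence $A_n\cap X_M$ is $\P'$-porous in $X_M$ by hypothesis. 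As $A\cap X_M=\bigcup_{n}(A_n\cap X_M)$, this exhibits $A\cap X_M$ as $\sigma$-$\P'$-porous in $X_M$. The ``in particular'' clause then follows formally: a pointwise $(\P\leftrightarrow\P')$-model is both a pointwise $(\P\to\P')$-model and a pointwise $(\neg\P\to\neg\P')$-model, so (i) and (ii) make it a $(\P\leftrightarrow\P')$-model, and a further use of (iii) makes it a $(\sigma$-$\P\to\sigma$-$\P')$-model.

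The only genuine difficulty is the bookkeeping in the reflection step: one must check that ``$A$ is $\P$-porous in $X$'' really carries no parameters outside $M$ and that all of its relevant subformulas (pair decoding, membership in $\P$) — together with the analogous ones for the $\sigma$-version used with Lemma~\ref{L:unique-M} — are on the list defining the $(*)$-elementary submodel, so that absoluteness applies and $f$ can be pulled into $M$. Everything else is a routine application of the propositions already recorded in Section~\ref{S:submodels}.
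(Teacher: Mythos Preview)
Your proposal is correct and takes essentially the same approach as the paper. Your ``reflection principle'' for (ii) is just a repackaging of the absoluteness the paper uses directly: the paper applies absoluteness to the existential formula $\exists x\in A\colon (x,A)\notin\P$ to pull a witness $x$ into $M$, whereas you apply absoluteness to the universal formula $\forall x\,(x\in A\to(x,A)\in\P)$ and argue contrapositively --- logically the same move. For (iii) both arguments invoke Lemma~\ref{L:unique-M} to obtain a decomposition in $M$; the paper uses $A\subset\bigcup_n D(n)$ while you use $A=\bigcup_n f(n)$, an immaterial difference.
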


\begin{proof}
Let us fix a $(*)$-elementary submodel $M$ containing $X$ and $\P$.

(i) The statement follows immediately from definitions.

(ii) Let us suppose $M$ is a pointwise $(\neg \P \to \neg \P')$-model and let us fix a non-$\P$-porous set $A\in\mathcal{P}(X)\cap M$.
Consider the formula
\begin{equation*}\tag{$*$}
\exists x\in A \colon (x,A) \notin \P,
\end{equation*}
with free variables $A$ and $\P$.
Since $A \in M$, $\P \in M$, and the above formula is absolute for $M$, there exists $x \in A \cap M$ such that $(x,A) \notin \P$, i.e.,
$A$ is not $\P$-porous at $x$.
Hence $A\cap X_M$ is not $\P'$-porous at $x$. Thus, $A\cap X_M$ is not $\P'$-porous in the space $X_M$ and (ii) holds.

(iii) Suppose that $A \in M \cap \mathcal P(X)$ is $\sigma$-$\P$-porous. Then the next formula is satisfied
\begin{equation*}\tag{$*$}
\begin{split}
\exists D \colon \bigl(& D \text{ is a function with }\dom D = \en, D(n) \subset X \text{ is $\P$-porous set}\\
& \text{for every }n \in \en,\;  A \subset \bigcup_{n \in \en} D(n)\bigr).
\end{split}
\end{equation*}
Now by Lemma~\ref{L:unique-M} we find $D \in M$ such that
\begin{equation*}
\begin{split}
& D \text{ is a function with }\dom D = \en, D(n) \subset X \text{ is $\P$-porous set}\\
& \text{for every }n \in \en,\;  A \subset \bigcup_{n \in \en} D(n).
\end{split}
\end{equation*}
We have $D(n) \in M$ for every $n \in \en$. Since $M$ is a $(\P \to \P')$-model, we obtain that $D(n) \cap X_M$ is $\P'$-porous in $X_M$ for every $n \in \en$,
hence, $A \cap X_M$ is $\sigma$-$\P'$-porous in $X_M$.
\end{proof}

\begin{lemma}\label{L:metric-lemma}
Let $(X,\varrho)$ be a metric space and $(Y,\varrho)$ be its complete subspace.
Assume we have a sequence $U_Y^n = U_Y(x_n,r_n) \subseteq Y$, $n \in \en$, of open balls in $Y$ and assume the following conditions hold:
\begin{itemize}
\item[(i)] $\lim_{n\to \infty}r_n = 0$,
\item[(ii)] for each $n\in \en$ we have $\ov{U_Y^{n+1}\;}^Y \subseteq U_Y^n$.
\end{itemize}
Denote $U_X^n := U_X(x_n,r_n)$. Then there exists an increasing sequence of integers $\{ n(k) \}_{k=1}^\infty$ such that for each $k\in \en$ we have
\[
\ov{U_X^{n(k+1)}\;}^X \subseteq U_X^{n(k)}.
\]
\end{lemma}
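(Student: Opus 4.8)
The plan is to first use the completeness of $Y$ to pin down a single point $x\in Y$ lying in every ball $U_Y^n$, and then to thin the sequence out so that the triangle inequality \emph{in $X$} forces the closed balls $\ov{U_X^{n}\;}^X$ to nest.

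First I would note that, by (ii), the sets $K_n:=\ov{U_Y^{n+1}\;}^Y$ form a decreasing sequence of nonempty closed subsets of $Y$ (nonempty since $x_{n+1}\in U_Y^{n+1}$, and decreasing because $K_{n+1}\subseteq U_Y^{n+1}\subseteq K_n$), while $\diam K_n\le 2r_{n+1}\to 0$ by (i). Since $Y$ is complete, Cantor's intersection theorem yields a point $x\in Y$ with $\bigcap_{n}K_n=\{x\}$. In particular $x\in\ov{U_Y^{n+1}\;}^Y\subseteq U_Y^n$ for every $n$, so
\[
\varrho(x_n,x)<r_n\qquad\text{for all }n\in\en ,
\]
and together with (i) this also gives $x_n\to x$.

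Next I would build $\{n(k)\}$ recursively. Put $n(1)=1$; given $n(k)$, set $\varepsilon_k:=r_{n(k)}-\varrho(x_{n(k)},x)>0$ and, using $r_m\to 0$ and $\varrho(x_m,x)\to 0$, choose $n(k+1)>n(k)$ with $r_{n(k+1)}<\varepsilon_k/2$ and $\varrho(x_{n(k+1)},x)<\varepsilon_k/2$. For any $y\in\ov{U_X^{n(k+1)}\;}^X$ one has $\varrho(x_{n(k+1)},y)\le r_{n(k+1)}$, because the closure of an open ball is contained in the concentric closed ball; hence
\[
\varrho(x_{n(k)},y)\le \varrho(x_{n(k)},x)+\varrho(x,x_{n(k+1)})+\varrho(x_{n(k+1)},y)<\bigl(r_{n(k)}-\varepsilon_k\bigr)+\tfrac{\varepsilon_k}{2}+\tfrac{\varepsilon_k}{2}=r_{n(k)},
\]
i.e.\ $y\in U_X^{n(k)}$. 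Thus $\ov{U_X^{n(k+1)}\;}^X\subseteq U_X^{n(k)}$ for every $k$, which is exactly the desired conclusion.

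The only step with real content is producing the common centre $x$; everything afterwards is a routine triangle-inequality estimate, where halving $\varepsilon_k$ supplies the slack needed to turn ``$\le$'' into ``$<$''. The point requiring care is that $x$ must be found \emph{inside} $Y$ — so that $\varrho(x_n,x)<r_n$ holds with strict inequality — and this is precisely where completeness of $Y$ enters; no completeness of $X$ is used.
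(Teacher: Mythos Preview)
Your proof is correct. Both your argument and the paper's hinge on the same observation: the centres $x_n$ converge to a point $x\in Y$ that lies \emph{strictly} inside every $U_Y^n$, i.e.\ $\varrho(x_n,x)<r_n$, and this strict inequality is what lets one upgrade from nesting in $Y$ to nesting in $X$. You obtain $x$ via Cantor's intersection theorem applied to the $K_n=\ov{U_Y^{n+1}\;}^Y$; the paper instead observes directly that $\{x_n\}$ is Cauchy in $Y$ and takes its limit. These are equivalent.

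The packaging differs: the paper argues by contradiction (assume some $U_X^{n_0}$ is never reached by a later $\ov{U_X^{n}\;}^X$, pick witnesses $y_n$ outside $U_X^{n_0}$, and show $y_n\to x\in U_Y^{n_0}\subseteq U_X^{n_0}$), whereas you build the subsequence directly by quantifying the slack $\varepsilon_k=r_{n(k)}-\varrho(x_{n(k)},x)>0$ and choosing $n(k+1)$ so that $r_{n(k+1)}+\varrho(x_{n(k+1)},x)<\varepsilon_k$. Your constructive version is arguably cleaner and makes explicit exactly how much room the strict inequality provides; the paper's contradiction argument is marginally shorter but hides this quantitative content. Either way, the mathematical core---completeness of $Y$ producing a common interior point---is identical.
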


\begin{proof}
We shall prove the following statement which implies the conclusion of the lemma: For~each $k\in\en$ there is $l\in\en$, $l>k$ such that $\ov{U_X^l\;}^X\subseteq U_X^k$.

Assume this is not the case, i.e., there is a natural number $n_0$ such that $\ov{U_X^n\;}^X \setminus U_X^{n_0} \neq \emptyset$ for each natural number $n>n_0$. Choose a sequence $\{y_n\}_{n=n_0+1}^\infty$ such that $y_n\in\ov{U_X^n\;}^X \setminus U_X^{n_0}$ for each $n>n_0$.

From the assumptions it is obvious that the sequence $\{x_n\}_{n=1}^\infty$ is Cauchy and hence it has a limit $x\in Y$ (as $Y$ is complete).
Since $\varrho (x_n,y_n)\leq r_n$, it also follows from (i) that $\lim_{n\to\infty}y_n=x$. Consequently, $x\notin U_X^{n_0}$ as $U_X^{n_0}$ is open and $y_n \notin U_X^{n_0}$ for any $n > n_0$.

On the other hand, the assumption (ii) gives that $\{ x_n\setsep n>n_0\}\subseteq U_Y^{n_0+1}$ and so, again by (ii), $x = \lim_{n\to\infty} x_n \in \ov{U_Y^{n_0+1}\;}^Y \subseteq U_Y^{n_0}$.
This is a contradiction.
\end{proof}

\begin{proposition}\label{P:reduction-Foran}
For any suitable elementary submodel $M$ the following holds.
Let $X$ be a complete metric space, $A\subset X$ and $\P$ be a porosity-like relation on $X$.
Let there exist a $(\B, \P)$-Foran scheme $\F$ in $X$, where $\B = \{U(x,r)\setsep x \in X,\; r \in \er_+\}$,
such that each element of $\F$ is a subset of $A$.
Assume that $M$ contains $X$, $A$, $\P$ and $M$ is a pointwise $(\neg \P \to \neg \P')$-model for some porosity-like relation $\P'$ on $X_M$.

Then there exists a $(\B', \P')$-Foran scheme $\F'$ in $X_M$,
where $\B' = \{U(x,r)\cap X_M \setsep x \in X \cap M,\; r \in \qe_+\}$,
such that each element of $\F'$ is a subset of $A \cap X_M$.
\end{proposition}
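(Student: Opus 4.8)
The plan is to take a given $(\B,\P)$-Foran scheme, copy it into $M$, and then intersect every set with $X_M$. Since $X,A,\P\in M$ and $\B$ is definable from $X$, the sentence ``there is a $(\B,\P)$-Foran scheme in $X$ each of whose elements is a subset of $A$'' is a formula with parameters in $M$ that holds in $V$; hence by Lemma~\ref{L:unique-M} there is such a scheme $\F=\{S(t)\setsep t\in\S\}\in M$. As $\S$ is countable and $\F$ is (coded by) a function with domain $\S$, we get $S(t)\in M$ for every $t\in\S$. Put $S'(t):=S(t)\cap X_M$ and $\F':=\{S'(t)\setsep t\in\S\}$. Each $S'(t)$ lies in $A\cap X_M$ because $S(t)\subseteq A$, and $S'(t)\neq\emptyset$ because $S(t)\in M$ is nonempty, so by Lemma~\ref{L:unique-M} there is a point of $S(t)$ in $M$, hence in $X_M$. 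Note that $X_M$, being closed in the complete space $X$, is itself complete. It remains to verify conditions (i)--(iii) of Definition~\ref{D:Foran-scheme} for $\F'$ in $X_M$, with basis $\B'$ and relation $\P'$.

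For (i) I would argue in two steps. First, $S(t)\cap M$ is dense in $S'(t)=S(t)\cap X_M$: given $x\in S'(t)$ and $r\in\qe_+$, pick $x'\in X\cap M$ with $\rho(x,x')<r$; the formula ``$\exists y\,(y\in S(t)\wedge\rho(x',y)<r)$'' is true (witnessed by $x$) and has all parameters in $M$, so Lemma~\ref{L:unique-M} produces $y\in S(t)\cap M$ with $\rho(x,y)<2r$. Second, for $w\in S(t)\cap M$ and $r\in\qe_+$, absoluteness of the clause of ``$\F$ is a $(\B,\P)$-Foran scheme'' asserting that $\bigcup_j S(t\h j)$ is dense in $S(t)$ yields $j\in\omega$ and $y\in S(t\h j)\cap M\subseteq\bigcup_j S'(t\h j)$ with $\rho(w,y)<r$. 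Combining the two steps shows $\bigcup_j S'(t\h j)$ is dense in $S'(t)$. Condition (ii) is immediate from the model hypothesis: if $x\in S'(t\h k)\subseteq X_M$ then $\neg\P(x,S(t))$ (as $S(t)$ is $\P$-porous at no point of $S(t\h k)$), and since $S(t)\in\mathcal P(X)\cap M$ and $M$ is a pointwise $(\neg\P\to\neg\P')$-model, $\neg\P'(x,S(t)\cap X_M)=\neg\P'(x,S'(t))$.

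Condition (iii) is the heart of the matter. Suppose $\nu\in\omega^\omega$ and a sequence $\{G'_n\}$ from $\B'$ satisfies (a)--(c). As $X_M$ is complete and the sets $\overline{G'_n}^{X_M}$ are nested with diameters tending to $0$, we have $\bigcap_n G'_n=\bigcap_n\overline{G'_n}^{X_M}=\{x^*\}$ with $x^*\in X_M$ and $x^*\in G'_n$ for all $n$; it thus suffices to prove $x^*\in S(\nu|n)$ for every $n$. Writing $G'_n=U_X(x_n,r_n)\cap X_M$ with $x_n\in X\cap M$ and $r_n\in\qe_+\subseteq M$, we have $U_X(x_n,r_n)\in M$, so for each $n$ the true formula ``$\exists y\,(y\in S(\nu|n)\wedge y\in U_X(x_n,r_n))$'' (witnessed by any point of $S'(\nu|n)\cap G'_n$) and Lemma~\ref{L:unique-M} give $y_n\in S(\nu|n)\cap U_X(x_n,r_n)\cap M\subseteq S'(\nu|n)\cap G'_n$. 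Since $y_n,x^*\in G'_n$, we get $\rho(y_n,x^*)\le\diam G'_n\to0$ and $\rho(y_n,y_{n'})\le\diam G'_n$ for $n'\ge n$. The remaining task is to build a sequence $\{H_n\}_{n\in\omega}$ of open balls in $X$ with $\diam H_n\to0$, $\overline{H_{n+1}}^X\subseteq H_n$, $y_n\in S(\nu|n)\cap H_n$, and $x^*\in H_n$; then Foran condition (iii) for $\F$ yields a point of $\bigcap_n\bigl(S(\nu|n)\cap H_n\bigr)$, which must be $x^*$ because $\bigcap_n\overline{H_n}^X=\{x^*\}$, and we are done.

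The construction of $\{H_n\}$ is, I expect, the main obstacle: nestedness of the $G'_n$ inside $X_M$ does not carry over to the ambient balls $U_X(x_n,r_n)$ (whose radii need not even tend to $0$), which is exactly the situation Lemma~\ref{L:metric-lemma} addresses. Concretely, one may assume $\delta_n:=\diam G'_n>0$ for all $n$ (otherwise $G'_n=\{x^*\}$ for all large $n$ and the argument is even simpler, since then $x^*\in S'(\nu|n)$ directly for those $n$), choose a subsequence $n(0)<n(1)<\cdots$ with $\delta_{n(k+1)}<\tfrac12\delta_{n(k)}$, and for $n$ in the block $[n(k),n(k+1))$ set $H_n:=U_X(y_{n(k)},t_n)$ where the $t_n$ are chosen strictly decreasing within the block and inside the nonempty interval $\bigl(\delta_{n(k)}+2\delta_{n(k+1)},\,2\delta_{n(k)}\bigr)$. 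Then $y_n,x^*\in H_n$ (their distance to $y_{n(k)}$ is at most $\delta_{n(k)}<t_n$); nestedness within a block is concentricity with shrinking radii; nestedness across a block follows from $\rho(y_{n(k)},y_{n(k+1)})\le\delta_{n(k)}$ and $t_{n(k+1)}<2\delta_{n(k+1)}$ via the triangle inequality; and $\diam H_n\le2t_n<4\delta_{n(k)}\to0$. With $\{H_n\}$ in hand, the conclusion follows as described above.
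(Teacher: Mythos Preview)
Your proof is correct and follows the same overall architecture as the paper's: pull a Foran scheme $S$ into $M$ by absoluteness, set $S'(t)=S(t)\cap X_M$, and verify (i)--(iii). Your treatments of (i) and (ii) are direct unwindings of what the paper cites (Lemma~\ref{L:density}, \cite[Lemma~2.10]{Cuth-Rmoutil}, and Proposition~\ref{P:porosity-like}(ii)), so there is no real difference there.

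The genuine divergence is in (iii). The paper first arranges $r_n<2\diam G'_n$ so that $r_n\to 0$, then invokes Lemma~\ref{L:metric-lemma} to pass to a subsequence $n(k)$ along which the ambient balls $U_X(x_{n(k)},r_{n(k)})$ are nested in $X$, and applies the Foran condition in $X$ to that subsequence. You instead bypass Lemma~\ref{L:metric-lemma} entirely: you extract witnesses $y_n\in S(\nu|n)\cap G'_n$ (via absoluteness, which also lands them in $X_M$), and then build a fresh nested sequence $\{H_n\}_{n\in\omega}$ of balls centered at the $y_{n(k)}$, indexed by the \emph{full} $n$ rather than by $k$. Your choice of radii in $(\delta_{n(k)}+2\delta_{n(k+1)},\,2\delta_{n(k)})$ makes the triangle-inequality checks transparent, and because your sequence is indexed by $n$ you feed $S(\nu|n)\cap H_n\ni y_n$ directly into the Foran condition without needing any relation between $S(\nu|k)$ and $S(\nu|n(k))$. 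This is a slightly more elementary and more robust route; the paper's approach is shorter because the nesting work is packaged into Lemma~\ref{L:metric-lemma}.

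One small remark: your parenthetical dismissal of the degenerate case $\delta_{n_0}=0$ is a bit quick. You correctly note $x^*\in S'(\nu|n)$ for $n\ge n_0$, but you still need $x^*\in S(\nu|n)$ for the finitely many $n<n_0$. This is easy---take $H_n=U_X(x^*,R_n)$ with $R_n$ strictly decreasing, large enough to contain $y_n$ for $n<n_0$ and tending to $0$ afterward, then apply the Foran condition---and is presumably what you mean by ``even simpler'', but it deserves one more sentence.
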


\begin{proof}
By the assumption, the following formula is true:
\begin{equation*}\tag{$*$}
\begin{split}
\exists S\;(S:\omega^{<\omega}\to\mathcal{P}(X)\text{ is such that } \{S(t)\setsep t\in\omega^{<\omega}\}&\text{ is a }(\B,\P)\text{-Foran scheme in }X\\
& \text{and, for every $t\in\omega^{<\omega}$, $S(t)\subset A$}).
\end{split}
\end{equation*}
Using Lemma~\ref{L:unique-M} and absoluteness of the preceding formula and its subformula for $M$, we find the corresponding $S \in M$.
Consequently, for every $t \in \omega^{<\omega}$, $S(t) \in M$. Now it is sufficient to prove, that
\[
\F' = \{S(t)\cap X_M \setsep t \in \omega^{<\omega}\}
\]
is a $(\B',\P')$-Foran scheme in $X_M$.
By \cite[Lemma 2.10]{Cuth-Rmoutil}, $S(t)\cap M$ is a dense subset of $S(t)\cap X_M$ for every $t \in \omega^{<\omega}$.
Hence, by Lemma \ref{L:density}, the condition (i) from Definition \ref{D:Foran-scheme} is satisfied.
Since $M$ is a pointwise $(\neg \P \to \neg \P')$-model for $\P'$ on $X_M$ we get by Proposition~\ref{P:porosity-like}(ii)
that the condition (ii) is satisfied.

In order to prove that (iii) holds, let us take some $\nu\in \omega^\omega$, a sequence $\{x_n\}_{n\in\omega}$ of elements of $X\cap M$ and a sequence $\{r_n\}_{n\in\omega}$ of numbers from $\qe_+$ such that the open balls $G_n = U(x_n,r_n)\cap X_M$ satisfy conditions (a), (b), and (c) in the space $X_M$. It is easy to see that the radii $r_n$ can be chosen in such a way that $r_n<2\diam(U(x_n,r_n)\cap X_M)=2\diam(G_n)$, and hence $r_n\to 0$. Then Lemma \ref{L:metric-lemma} gives the existence of an increasing sequence of integers $\{n(k)\}_{k=1}^\infty$ such that $\ov{U(x_{n(k+1)},r_{n(k+1)})\;}^X \subseteq U(x_{n(k)},r_{n(k)})$ for each $k$. Hence we have that the sequence $\left\{U(x_{n(k)},r_{n(k)}) \right\}_{k=1}^\infty$ satisfies condition (b) from the definition of $(\B, \P)$-Foran scheme in $X$ and the condition (a) follows from our assumption that $r_n\to 0$.
Now we verify the condition (c).
From the assumptions on $G_n$ we know that
$U(x_{n(k)},r_{n(k)}) \cap S(\nu \rest k) \supseteq U(x_{n(k)},r_{n(k)}) \cap S(\nu\rest n(k)) \cap X_M \neq \emptyset$
and so (as $\F$ is a $(\B, \P)$-Foran scheme in $X$) we have that there exists $x \in \bigcap_{k=1}^\infty U(x_{n(k)},r_{n(k)}) \cap S(\nu\rest k)$.
Since $\lim x_n = x$ by (a) and (b), we have $x \in X_M$. Consequently,
\[
x \in  \bigcap_{n=1}^\infty(G_n \cap S(\nu\rest n)\cap X_M).
\]
This verifies (iii) from Definition~\ref{D:Foran-scheme}.
\end{proof}

\begin{proposition}\label{P:reduction-porosity-like}
For any suitable elementary submodel $M$ the following holds.
Let $X$ be a complete metric space, $\P$ be a porosity-like relation on $X$, and ${\P'}$ be a porosity-like relation on $X_M$. Suppose that $N_{\P}(S)$ is a Suslin set whenever $S\subset X$ is Suslin. Assume $M$ contains $X$, $\P$, and a Suslin set $A\subset X$. Then whenever $M$ is a pointwise $(\P \leftrightarrow \P')$-model, then the following holds:
\begin{align*}
A \text{ is $\P$-porous in the space }X            &\iff A \cap X_M \text{ is $\P'$-porous in the space } X_M, \\
A \text{ is $\sigma$-$\P$-porous in the space }X   &\iff A \cap X_M \text{ is $\sigma$-$\P'$-porous in the space } X_M.
\end{align*}
\end{proposition}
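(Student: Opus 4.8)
The plan is to separate the two biconditionals by difficulty. Both forward implications, as well as the whole first equivalence (about $\P$-porosity), will follow immediately from Proposition~\ref{P:porosity-like}; only the reverse implication of the second equivalence genuinely uses the Foran machinery of Section~\ref{S:Foran}.

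I would begin by fixing a $(*)$-elementary submodel $M$ containing $X$, $\P$ and $A$, which makes $M$ suitable for Propositions~\ref{P:porosity-like}, \ref{P:reduction-Foran} and \ref{P:Foran-converse} and for Lemma~\ref{L:Foran-lemma}. Since $M$ is a pointwise $(\P \leftrightarrow \P')$-model, it is in particular a pointwise $(\P \to \P')$-model and a pointwise $(\neg\P \to \neg\P')$-model; hence, by the last sentence of Proposition~\ref{P:porosity-like}, $M$ is a $(\P \leftrightarrow \P')$-model and a $(\sigma$-$\P \to \sigma$-$\P')$-model. Applying these to the set $A \in \mathcal P(X) \cap M$ immediately gives that $A$ is $\P$-porous in $X$ if and only if $A \cap X_M$ is $\P'$-porous in $X_M$, and that if $A$ is $\sigma$-$\P$-porous in $X$ then $A \cap X_M$ is $\sigma$-$\P'$-porous in $X_M$. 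So only the reverse implication of the $\sigma$-porosity equivalence remains to be proved.

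For that I would argue by contraposition, so assume $A$ is not $\sigma$-$\P$-porous in $X$. Since $A$ is Suslin and, by hypothesis, $N_{\P}$ maps Suslin sets to Suslin sets, Proposition~\ref{P:Foran-converse}, applied with the ball basis $\B = \{U(x,r)\setsep x \in X,\ r \in \er_+\}$, yields a $(\B,\P)$-Foran scheme $\F$ in $X$ each of whose members is a subset of $A$. Since $M$ contains $X$, $A$ and $\P$ and is a pointwise $(\neg\P \to \neg\P')$-model, Proposition~\ref{P:reduction-Foran} then produces a $(\B',\P')$-Foran scheme $\F'$ in $X_M$, where $\B' = \{U(x,r)\cap X_M\setsep x \in X \cap M,\ r \in \qe_+\}$, each of whose members is a subset of $A \cap X_M$ (here one uses that $X_M$ is complete, being closed in the complete space $X$). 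By Lemma~\ref{L:Foran-lemma} every member of $\F'$ is then a non-$\sigma$-$\P'$-porous subset of $A \cap X_M$, and since condition~(i) in the definition of a porosity-like relation makes $\sigma$-$\P'$-porosity pass to subsets, it follows that $A \cap X_M$ cannot be $\sigma$-$\P'$-porous in $X_M$, which is exactly what we needed.

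I expect the real difficulty of the statement to sit entirely in the results being invoked rather than in this proof: the converse implication for $\sigma$-porosity cannot be obtained by transferring a covering of $A \cap X_M$ by $\P'$-porous sets back to a covering of $A$ by $\P$-porous sets, which forces one through the chain Proposition~\ref{P:Foran-converse}~$\to$~Proposition~\ref{P:reduction-Foran}~$\to$~Lemma~\ref{L:Foran-lemma}. Once those are granted, what remains here is only the bookkeeping of extracting the appropriate model properties of $M$ from its being a pointwise $(\P \leftrightarrow \P')$-model and of checking that $A$, $\P$ and the ball basis $\B$ lie in $M$ (or are uniquely definable from elements of $M$), so that the cited propositions genuinely apply.
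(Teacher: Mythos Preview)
Your proposal is correct and follows essentially the same route as the paper: reduce everything to the reverse implication of the $\sigma$-porosity equivalence via Proposition~\ref{P:porosity-like}, then run the chain Proposition~\ref{P:Foran-converse} $\to$ Proposition~\ref{P:reduction-Foran} $\to$ Lemma~\ref{L:Foran-lemma} on a non-$\sigma$-$\P$-porous Suslin $A$. Your write-up is actually slightly more explicit than the paper's, e.g.\ in noting that $X_M$ is complete and that $\sigma$-$\P'$-porosity is inherited by subsets.
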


\begin{proof}
By Proposition \ref{P:porosity-like}, it is sufficient to prove the implication from the right to the left in the second equivalence.
Let us fix a $(*)$-elementary submodel $M$ containing $X$, $\P$, and a non-$\sigma$-$\P$-porous Suslin set $A\subset X$.
We would like to verify that $A$ is not $\sigma$-$\P'$-porous in the space $X_M$. Let $\B$, $\B'$ be as in Proposition \ref{P:reduction-Foran}.
By Proposition \ref{P:Foran-converse}, there exists a $(\B,\P)$-Foran scheme $\F$ in $X$ such that each element of $\F$ is a subset of $A$.
Using Proposition \ref{P:reduction-Foran}, there exists a $(\B',\P')$-Foran scheme $\F'$ in $X_M$ such that each element of $\F'$
is a subset of $A \cap X_M$. Hence, by Lemma \ref{L:Foran-lemma}, $A \cap X_M$ is non-$\sigma$-$\P'$-porous in the space $X_M$.
\end{proof}

\begin{remark}
Let $X$ be a complete metric space and  $\P_{up}$ be the porosity-like relation defined by
(for the definition of the upper porosity, see for example \cite{Cuth})
\[
{\P_{up}} = \{(x,A) \in X \times \mathcal{P}(X) \setsep A \text{ is upper porous at the point } x\}.
\]
Let us fix a $(*)$-elementary submodel $M$ containing $X$ and $\P_{up}$. Denote by ${\P'_{up}}$ the porosity-like relation defined by
\[
{\P'_{up}} = \{(x,A) \in X_M \times \mathcal{P}(X_M) \setsep A \text{ is upper porous at the point } x \text{ in the space } X_M\}.
\]
Then, by results from \cite{Cuth} and \cite{Cuth-Rmoutil}, $M$ is a pointwise $({\P_{up}} \leftrightarrow {\P'_{up}})$-model.
It is easy to see that $N_{\P_{up}}(S)$ is a Suslin set whenever $S\subset X$ is Suslin. Thus, by Proposition \ref{P:reduction-porosity-like},
$\sigma$-upper porosity is separably determined property. This result has already been proved in \cite{Cuth-Rmoutil}.
However, a nontrivial inscribing theorem (\cite[Theorem 3.1]{Zeleny-Pelant}) is needed in the proof there. The above mentioned method enables us
to avoid the usage of this result.
\end{remark}

\begin{remark}
It is known to the authors that also the notions of lower porosity, $\langle g\rangle$-porosity, and $(g)$-porosity
satisfy the assumptions of Proposition \ref{P:reduction-porosity-like} (for definitions see \cite{Zajicek-1}).
Consequently, those porosities (and corresponding $\sigma$-porosities) are separably determined when taking Suslin sets in complete metric spaces.
We do not present proofs of those results here since now we see no interesting applications.

Note that in the following section we prove that also the notion of $\alpha$-cone porosity in Asplund spaces satisfies the assumptions of
Proposition~\ref{P:reduction-porosity-like} and, therefore, cone smallness is separably determined.
\end{remark}

\begin{question}
It is an open problem whether the notion of $\sigma$-directional porosity (see \cite{Zajicek-4} for the definition) is separably determined
in the sense of Corollary \ref{C:cone-small}.
\end{question}

Note that the notion of $\sigma$-directional porosity is defined also in \cite{LPT}, but in a slightly different
way which is equivalent to the definition from \cite{Zajicek-4} only in separable Banach spaces.

\section{Cone porosity}\label{S:cone}

In the following section we prove that the notion of $\alpha$-cone porosity in Asplund spaces satisfies the assumptions of Proposition~\ref{P:reduction-porosity-like} and, therefore, cone smallness is separably determined. First, let us give the definition.

\begin{definition}
Let $X$ be a Banach space. For $x^*\in X^*\setminus\{0\}$ and $\alpha\in[0,1)$ we define the \emph{$\alpha$-cone}
\[
C(x^*,\alpha) = \{x\in X \setsep \alpha\|x\|\cdot\|x^*\| < x^*(x)\}.
\]
A set $A \subset X$ is said to be \emph{$\alpha$-cone porous} at $x \in X$ in the space $X$ if there exists $R>0$ such that
for each $\varepsilon > 0$ there exists $z \in U(x,\varepsilon)$ and $x^* \in X^* \setminus \{0\}$ such that
\[
U(x,R) \cap \bigl(z+C(x^*,\alpha)\bigr) \cap A = \emptyset.
\]
The corresponding porosity-like relation is denoted by ${\P_X^{\alpha\text{-}cone}}$.
A set is said to be \emph{cone small} if it is $\sigma$-${\P_X^{\alpha\text{-}cone}}$-porous for each $\alpha\in(0,1)$.
\end{definition}

The following lemma comes from {\cite[Lemma 4.14]{Cuth}}.

\begin{lemma}\label{L:sup-fin-M}
For any suitable elementary submodel $M$ the following holds.
Let $(X,\rho)$ be a metric space and $f \colon X \to \er$ be a function. Then whenever $M$ contains $X$ and $f$,
it is true that for every $R > 0$ and $x \in X_M$ we have
\[
\sup_{u\in U(x,R)}f(u) = \sup_{u\in U(x,R) \cap X_M} f(u).
\]
\end{lemma}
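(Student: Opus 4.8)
The plan is to prove the two inequalities of the displayed identity separately. Since $U(x,R)\cap X_M\subseteq U(x,R)$ and this set is nonempty (as $x\in X_M$ and $\rho(x,x)=0<R$), the inequality ``$\ge$'' is immediate. So fix a $(*)$-elementary submodel $M$ containing $X$ and $f$, fix $x\in X_M$ and $R>0$, put $\alpha:=\sup_{u\in U(x,R)}f(u)\in(-\infty,+\infty]$, and let me show that $\sup_{u\in U(x,R)\cap X_M}f(u)\ge\beta$ for every rational $\beta<\alpha$; letting $\beta\uparrow\alpha$ through $\qe$ then yields ``$\le$'' and simultaneously handles the case $\alpha=+\infty$.

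So fix $\beta\in\qe$ with $\beta<\alpha$ and choose $u_0\in U(x,R)$ with $f(u_0)>\beta$; set $r:=\rho(x,u_0)<R$ and fix a real $\delta$ with $0<\delta<\tfrac{R-r}{2}$. Since $x\in X_M=\ov{X\cap M}$, there is $x'\in X\cap M$ with $\rho(x,x')<\delta$, and since $r+\delta<R-\delta$ there is $R'\in\qe_+$ with $r+\delta<R'<R-\delta$. Then $\rho(x',u_0)\le\rho(x',x)+\rho(x,u_0)<\delta+r<R'$, so $u_0\in U(x',R')$, and for every $w\in U(x',R')$ we have $\rho(x,w)\le\rho(x,x')+\rho(x',w)<\delta+R'<R$, so $U(x',R')\subseteq U(x,R)$. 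Note that $x'\in M$, that $R'\in\qe_+\subseteq M$ and $\beta\in\qe\subseteq M$ (both $\qe,\qe_+$ being countable members of $M$), and that $X$, $\rho$, $f$ and the relation $<$ on $\er$ all lie in $M$.

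Now consider the formula, which I would mark $(*)$,
\[
\exists v\,\bigl(v\in X\ \wedge\ \rho(x',v)<R'\ \wedge\ f(v)>\beta\bigr);
\]
all its free variables ($X$, $\rho$, $<$, $x'$, $R'$, $f$, $\beta$) lie in $M$, and it holds, being witnessed by $u_0$. By Lemma~\ref{L:unique-M} and absoluteness of this formula and its subformulas for $M$, there is $v\in M$ with $v\in X$, $\rho(x',v)<R'$ and $f(v)>\beta$. Then $v\in X\cap M\subseteq X_M$ and $v\in U(x',R')\subseteq U(x,R)$, so $v\in U(x,R)\cap X_M$ with $f(v)>\beta$; hence $\sup_{u\in U(x,R)\cap X_M}f(u)\ge\beta$, as required.

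The only subtle point --- and the thing that forces the perturbation in the second paragraph --- is that the point $x$ need not belong to $M$ itself, only to its closure $X_M=\ov{X\cap M}$, so one cannot invoke elementarity directly at $x$ (or at $R$, which need not be rational). The remedy is to replace $x$ by a nearby $x'\in X\cap M$ together with a slightly smaller rational radius $R'$, chosen so that $U(x',R')$ is sandwiched between a ball still containing the chosen near-maximizer $u_0$ and the original ball $U(x,R)$; once this is set up, the conclusion is a direct application of Lemma~\ref{L:unique-M}.
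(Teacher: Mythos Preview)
Your proof is correct. The paper does not actually prove this lemma itself; it merely cites \cite[Lemma~4.14]{Cuth}, so there is no in-paper argument to compare against. Your approach---approximating the point $x\in X_M$ by some $x'\in X\cap M$ and replacing $R$ by a rational $R'$ so that $U(x',R')$ is sandwiched between a ball containing a near-maximizer and $U(x,R)$, then invoking Lemma~\ref{L:unique-M}---is exactly the standard device used throughout this line of work (indeed, the same perturbation trick appears in the paper's proofs of Proposition~\ref{P:pointwise-cone-first} and Proposition~\ref{P:point-cone}), and it is essentially how the cited proof in \cite{Cuth} proceeds as well.
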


\begin{proposition}\label{P:pointwise-cone-first}
For any suitable elementary submodel $M$ the following holds.
Let $X$ be a Banach space and $\alpha \in [0,1)$. Then whenever $M$ contains $X$ and $\alpha$, $M$ is a pointwise $({\P_X^{\alpha\text{-}cone}}\to {\P_{X_M}^{\alpha\text{-}cone}})$-model.
\end{proposition}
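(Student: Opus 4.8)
The plan is to show that $A \cap X_M$ is $\alpha$-cone porous at $x$ in $X_M$ with constant $R/2$, where $R$ is a constant witnessing that $A$ is $\alpha$-cone porous at $x$ in $X$. Since that porosity persists under shrinking of the constant, I may from the start take $R \in \qe_+$, so that $R \in M$. Fix $\varepsilon \in (0, R/2)$; the task is to produce $z' \in U_{X_M}(x,\varepsilon)$ and $\tilde x^* \in (X_M)^* \setminus \{0\}$ with $U_{X_M}(x, R/2) \cap (z' + C_{X_M}(\tilde x^*, \alpha)) \cap A \cap X_M = \emptyset$.

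The first difficulty is that $x$ itself need not belong to $M$, so the porosity of $A$ at $x$ cannot be reflected directly. I circumvent this by perturbing: using density of $X \cap M$ in $X_M$, choose $x_0 \in X \cap M$ and $\delta_0 \in \qe_+$ with $\|x - x_0\| < \delta_0 < \min(\varepsilon/4, R/4)$ (hence $x_0, \delta_0 \in M$), and fix $\eta_0 \in \qe_+ \cap (\delta_0, \varepsilon/2)$. Applying $\alpha$-cone porosity of $A$ at $x$ at the scale $\eta_0 - \delta_0 > 0$ and using the inclusions $U(x, \eta_0 - \delta_0) \subseteq U(x_0, \eta_0)$ and $U(x_0, R - \delta_0) \subseteq U(x, R)$, one checks that the statement ``there exist $z \in X \cap U(x_0, \eta_0)$ and a norm-one linear functional $x^*$ on $X$ such that $x^*(a - z) \le \alpha \|a - z\|$ for every $a \in A \cap U(x_0, R - \delta_0)$'' is true; all its parameters ($x_0$, $\eta_0$, $A$, $R - \delta_0$, $\alpha$, and implicitly $X$, $\|\cdot\|$) lie in $M$. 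By Lemma~\ref{L:unique-M} I then obtain such $z, x^*$ inside $M$, and set $z' := z \in X \cap M \subseteq X_M$ and $\tilde x^* := x^* \rest_{X_M}$. A triangle-inequality estimate ($\|z' - x\| < \eta_0 + \delta_0 < \varepsilon$) gives $z' \in U_{X_M}(x, \varepsilon)$.

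The step I expect to be the delicate one is checking that $\tilde x^*$ is still an admissible functional: a priori, restricting $x^*$ to $X_M$ could lower its norm, which would make $C_{X_M}(\tilde x^*, \alpha)$ strictly wider than $C(x^*, \alpha) \cap X_M$ and destroy the argument. This is exactly where $x^* \in M$ is used. Applying Lemma~\ref{L:sup-fin-M} to the function $x^*$ with the centre $0 \in X_M$ and radius $1$ yields $\sup_{U_X(0,1)} x^* = \sup_{U_{X_M}(0,1)} x^*$, i.e.\ $\|\tilde x^*\|_{(X_M)^*} = \|x^*\| = 1$; in particular $\tilde x^* \ne 0$, and for $v \in X_M$ one has $v \in C_{X_M}(\tilde x^*, \alpha) \iff \alpha \|v\| < x^*(v)$. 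With this in hand, if some $y$ belonged to $U_{X_M}(x, R/2) \cap (z' + C_{X_M}(\tilde x^*, \alpha)) \cap A \cap X_M$, then on one hand $\alpha \|y - z'\| < x^*(y - z')$, while on the other $\|y - x_0\| \le \|y - x\| + \|x - x_0\| < R/2 + \delta_0 < R - \delta_0$, so $y \in A \cap U(x_0, R - \delta_0)$ and hence $x^*(y - z') \le \alpha \|y - z'\|$ by the defining property of $(z, x^*)$ --- a contradiction. Thus the required hole is empty, and $A \cap X_M$ is $\alpha$-cone porous at $x$ in $X_M$, as desired. The remaining bookkeeping (the elementary-submodel preliminaries ensuring the relevant formulas are absolute, and the triangle-inequality estimates with the rational constants $\delta_0, \eta_0, R - \delta_0$, all of which automatically lie in $M$) is routine.
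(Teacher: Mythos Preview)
Your proof is correct and follows essentially the same approach as the paper's: perturb $x$ to a nearby point in $M$, use absoluteness to pull the witnesses $z$ and $x^*$ into $M$, invoke Lemma~\ref{L:sup-fin-M} to ensure that restricting $x^*$ to $X_M$ preserves its norm (so the cone does not widen), and conclude via triangle-inequality estimates. The differences---normalizing $\|x^*\|=1$, phrasing the emptiness of the cone as the inequality $x^*(a-z)\le\alpha\|a-z\|$, and using the shrunk radius $R/2$ rather than the paper's $R/4$---are purely cosmetic.
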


\begin{proof}
Let us fix a $(*)$-elementary submodel $M$ containing $X$, $\alpha$ and a set $A \in \mathcal P(X) \cap M$.
Fix some $x\in X_M$ such that $A$ is $\alpha$-cone porous at $x$. Then there exists a rational number $R > 0$ such that
\[
\forall \varepsilon > 0\; \exists z\in U(x,\varepsilon)\; \exists x^*\in X^*\setminus\{0\}\colon  U(x,R) \cap \bigl(z+C(x^*,\alpha)\bigr) \cap A = \emptyset.
\]
We will show that this formula is true in the space $X_M$ with the constant $\frac14 R$ instead of $R$.
Let us fix a rational $\varepsilon > 0$. Fix a number $0 < \delta < \min\{\frac13 \varepsilon,\frac14 R\}$ and a point $x' \in U(x,\delta) \cap M$.
Then it is easy to observe that the following formula is true
\begin{equation*}\tag{$*$}
\exists z'\in U(x',\tfrac23 \varepsilon)\;\exists x^*\in X^*\setminus\{0\} \colon  U(x',\tfrac12 R) \cap \bigl(z'+C(x^*,\alpha)\bigr) \cap A =  \emptyset.
\end{equation*}
(Indeed, it is enough to take a point $z \in U(x,\tfrac13 \varepsilon) \subset U(x',\tfrac23 \varepsilon)$ and $x^* \in X^* \setminus \{0\}$
satisfying $U(x,R) \cap \bigl(z+C(x^*,\alpha)\bigr) \cap A = \emptyset$ and to observe that $U(x',\tfrac12 R) \subset U(x,R)$.)
Using the absoluteness of this formula (and its subformulas) we find $z' \in U(x',\frac23 \varepsilon) \cap M \subset U(x,\varepsilon) \cap M$ and $x^*\in X^* \cap M \setminus \{0\}$ such that
\begin{equation}\label{E:cone-empty}
U(x',\tfrac{R}{2}) \cap \bigl(z'+C(x^*,\alpha)\bigr) \cap A = \emptyset.
\end{equation}
By Lemma \ref{L:sup-fin-M} we have $\|x^*\| = \|{x^*\rest_{X_M}}\|$.
Hence, the cone $C({x^*\rest_{X_M}},\alpha)$ in the space $X_M$ equals to $C(x^*,\alpha) \cap X_M$. We need to verify that
\[
U(x,\tfrac{R}{4}) \cap \bigl(z'+C(x^*,\alpha)\bigr) \cap A \cap X_M = \emptyset.
\]
Fix some $a \in A \cap X_M$ such that $\|x-a\| < \tfrac14 R$.
Then $a$ is an element of $U(x',\frac12 R)$. By \eqref{E:cone-empty} we conclude $a \notin z'+C(x^*,\alpha)$ and the proof is finished.
\end{proof}

In order to show existence of a pointwise $({\P_X^{\alpha\text{-}cone}} \leftarrow {\P_{X_M}^{\alpha\text{-}cone}})$-models we restrict our attention to Asplund spaces. First, we need to prove that ``functionals from the model $M$ are dense in $(X_M)^*$ when $X$ is an Asplund space''.
This seems to be a nontrivial and very useful result and it might be useful in other separable reduction theorems as well.
The proof can be done using the existence of a ``projectional generator with domain $X$'' in the dual space of an Asplund space $X$.
In fact, it is sufficient to use only the first part of the proof of this statement from \cite{Fabian}.

\begin{theorem}\label{T:Asplund-model}
For any suitable elementary submodel $M$ the following holds.
Let $X$ be an Asplund space. Then whenever $M$ contains $X$, it is true that
$\ov{\{x^*\rest_{X_M} \setsep x^*\in X^*\cap M\}} = (X_M)^*$.
\end{theorem}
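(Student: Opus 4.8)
The plan is to exploit the structure theory of Asplund spaces, specifically the existence of a \emph{projectional generator} on the dual of an Asplund space, exactly as indicated in the remark preceding the statement. Recall that if $X$ is Asplund, then $X^*$ admits a projectional generator; concretely, there is a countable-valued map $\Phi\colon X \to 2^{X^*}$ (one may take $\Phi(x)$ to consist of functionals nearly norming $x$, chosen from a fixed norming set) such that for every subspace $B \subseteq X$, if $x^* \in \ov{\sspan}^{w^*}\bigl(\bigcup_{x \in B}\Phi(x)\bigr)$ vanishes on $B$, then $x^* = 0$. As the remark notes, we only need the first part of that construction, i.e.\ the map $\Phi$ itself, together with the elementary duality estimate behind it. The idea is then to push $\Phi$ through the elementary submodel $M$ and use absoluteness to conclude that $M$ already ``sees'' enough functionals to be dense in $(X_M)^*$.

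First I would set up the elementary submodel: fix a $(*)$-elementary submodel $M$ containing $X$ and, crucially, also containing the projectional generator $\Phi$ (this is where I add $\Phi$ to the list of objects the model must contain, together with the formulas expressing its defining property). By Lemma~\ref{L:unique-M}, for every $x \in X \cap M$ the countable set $\Phi(x)$ belongs to $M$, hence $\Phi(x) \subseteq M$ by countability; thus $\bigcup\{\Phi(x)\setsep x \in X \cap M\} \subseteq X^* \cap M$. Next I would argue that the restrictions of these particular functionals are already dense in $(X_M)^*$: the key point is a Hahn--Banach separation argument. Suppose $F \in (X_M)^*$ is \emph{not} in the closure of $\{x^*\rest_{X_M}\setsep x^* \in X^*\cap M\}$; extend $F$ (abusing notation) to some $\tilde F \in X^*$, and apply the projectional generator property to the subspace $B := X_M$ (or rather $X \cap M$, whose closed span is $X_M$). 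The defining property of $\Phi$ forces a functional that is a $w^*$-limit of combinations from $\bigcup_{x \in X\cap M}\Phi(x)$ and vanishes on $X_M$ to be zero; combined with the separation hypothesis this yields the contradiction $F = 0$. Here one must be a little careful: the projectional generator gives information about $w^*$-closed spans, while we want norm-density of restrictions in $(X_M)^*$; but since $X_M$ is separable, $(X_M)^*$ with its $w^*$-topology is metrizable on bounded sets, and more importantly the standard ``projectional resolution'' lemma (Fabian's argument) directly produces a norm-one projection whose range is the $w^*$-closure of the span of the relevant functionals — restricting that range to $X_M$ gives all of $(X_M)^*$.

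Concretely, the cleanest route is: let $Y := X_M$ and let $R\colon X^* \to Y^*$ be the restriction map, which is a quotient map (surjective, norm-decreasing) since $Y$ is a subspace. The subspace $Z := \ov{R\bigl(\sspan(\bigcup_{x\in X\cap M}\Phi(x))\bigr)} \subseteq Y^*$ is what we must show equals $Y^*$. If not, pick $0 \neq y^{**} \in Y^{**}$ annihilating $Z$; since $Y$ is separable (hence $Y^*$ too is, or at least $Y$ embeds weak-$*$-densely), a limiting argument over a countable dense subset of $X\cap M$ together with the projectional generator inequality — every functional is $w^*$-approximated on finite sets by elements of $\sspan(\bigcup_{x}\Phi(x))$ relative to the subspace they are tested against — shows $y^{**}$ must vanish on all of $Y^* = R(X^*)$, contradicting $y^{**}\neq 0$. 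The main obstacle, and the step deserving the most care, is precisely this transfer between the $w^*$-approximation property furnished by the projectional generator and the norm-density claimed in the statement; this is exactly the nontrivial ingredient the authors flag by citing \cite{Fabian}, and in writing it out I would lean on the fact that on the separable space $Y = X_M$ the unit ball of $Y^*$ is $w^*$-metrizable, so $w^*$-density of a convex symmetric bounded-generating set upgrades to norm-density of its span. Everything else — the absoluteness bookkeeping ensuring $\Phi(x)\in M$ and $\Phi(x)\subseteq M$, and the identification of norms $\|x^*\| = \|x^*\rest_{X_M}\|$ via Lemma~\ref{L:sup-fin-M} where needed — is routine.
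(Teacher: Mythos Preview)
Your approach shares the paper's spirit---invoke the Asplund structure theory from \cite{Fabian} and push the relevant object through $M$ by absoluteness---but the paper uses a more processed consequence of \cite{Fabian} than the raw projectional generator. Specifically, the paper quotes from the second step of \cite{Fabian} the existence of \emph{continuous} maps $D(n)\colon X\to X^*$, $n\in\en$, with the property that for every closed separable $V\subset X$ one already has $\ov{\sspan}\{D(n)(x)\rest_V\setsep n\in\en,\ x\in V\}=V^*$ in \emph{norm}. Once such a $D$ is placed in $M$ by absoluteness, continuity gives $\{D(n)(x)\setsep x\in X_M\}\subset\ov{X^*\cap M}$, and applying the quoted property with $V=X_M$ finishes immediately. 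The $w^*$-to-norm transfer is thus entirely absorbed into the cited result; nothing further is argued.

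Your route, by contrast, keeps the projectional generator $\Phi$ in its raw form and attempts the $w^*$-to-norm upgrade inside the proof. This is where there is a genuine gap. The separation argument you sketch does not connect cleanly: the projectional generator property constrains elements of $X^*$ lying in a $w^*$-closed span and vanishing on $B$, whereas your annihilator $y^{**}$ lives in $(X_M)^{**}$, and no mechanism is given to pass between these. More seriously, the closing claim---that on a separable $Y$, $w^*$-density of a convex symmetric set in $B_{Y^*}$ ``upgrades to norm-density of its span''---is false. Take $Y=c_0$, so $Y^*=\ell_1$; for any $y^{**}\in\ell_\infty\setminus c_0$ the hyperplane $\ker y^{**}\subset\ell_1$ is $w^*$-dense (it is not $w^*$-closed) yet norm-closed and proper. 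So metrizability of $B_{Y^*}$ in the $w^*$-topology buys you nothing here. What actually works in \cite{Fabian} is the construction of a norm-one projection on $X^*$, and extracting from that the norm-density statement for restrictions is precisely the content the paper packages into the maps $D(n)$. If you want to avoid quoting that packaged form, you must reproduce a nontrivial portion of the projectional-resolution construction, not just a separation argument.
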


\begin{proof}
It is obvious that the inclusion ``$\subset$'' holds. Let us show that the opposite one holds as well.
It is proved in the second step of the proof of {\cite[Theorem 1]{Fabian}}
that there exists continuous mappings $D(n) \colon X \to X^*$, $n \in \en$, such that
that, for every closed separable subspace $V \subset X$, we have
\[
\ov{\sspan}\{D(n)(x) \rest_V \setsep n \in \en, x \in V\} = V^*.
\]
Using the absoluteness of the formula (and its subformula)
\begin{equation*}\tag{$*$}
\begin{split}
\exists D \colon \bigl(& D \text{ is a function, } \dom D = \en, \ D(n) \text{ are $\|\cdot\|$-$\|\cdot\|$ continuous mappings} \\
&\text{from $X$ into $X^*$ and for every closed separable subspace $V$ of $X$ we have } \\
&\ov{\sspan}\{D(n)(x) \rest_V \setsep n \in \en, x \in V\} = V^* \bigr),\\
\end{split}
\end{equation*}
we may without loss of generality assume that $D(n) \in M$ for every $n \in \en$. Thus, for every $n \in \en$ and $x \in X \cap M$, we have $D(n)(x) \in M$.
Using the continuity of $D(n)$ for every $n \in \en$, we get
\[
\{D(n)(x) \setsep n \in \en, x \in X_M\} \subset \ov{\{D(n)(x) \setsep n \in \en, x \in X \cap M\}} \subset \ov{X^* \cap M}.
\]
Finally,
\begin{align*}
(X_M)^* &= \ov{\sspan}\{{D(n)(x)\rest_{X_M}} \setsep n \in \en, x \in X_M\}
\subset \ov{\{x^*\rest_{X_M} \setsep  x^* \in \ov{X^* \cap M}\}} \\
&\subset \ov{\{x^*\rest_{X_M} \setsep x^* \in X^* \cap M\}}.
\end{align*}
\end{proof}

Now, we need to observe that it is enough to consider functionals from a dense subset of $X^*$ in the definition of $\alpha$-cone porosity.

\begin{lemma}\label{L:equivalent-def-cone}
Let $X$ be a Banach space and let $E\subset X$ and $D\subset X^*$ be norm-dense subsets. Let $A\subset X$ and $x\in X$ and $\alpha\in [0,1)$. Then $A$ is $\alpha$-cone porous at $x$ if and only if the following is true:
\[
\exists R\in \qe_+ \ \forall \varepsilon\in \qe_+ \ \exists y^* \in D \ \exists w \in B(x,\varepsilon) \cap E \colon B(x,R) \cap (w + C(y^*,\alpha)) \cap A = \emptyset.
\]
\end{lemma}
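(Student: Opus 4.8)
The plan is to prove the two implications separately; the backward one (``the displayed condition $\Rightarrow$ $\alpha$-cone porosity'') is immediate, so essentially all the work is in the forward direction. Since the cone $C(y^*,\alpha)$ is only defined for $y^*\neq 0$, I read the quantifier $\exists y^*\in D$ as $\exists y^*\in D\setminus\{0\}$; note $D\setminus\{0\}$ is still norm-dense in $X^*$ (the case $X=\{0\}$ being trivial). For the backward implication: if the displayed condition holds with some $R'\in\qe_+$, then $R:=R'$ witnesses $\alpha$-cone porosity of $A$ at $x$, because given $\varepsilon>0$ one picks $\varepsilon''\in\qe_+$ with $\varepsilon''<\varepsilon$, applies the condition to $\varepsilon''$ to get $y^*\in D\setminus\{0\}$ and $w\in B(x,\varepsilon'')\cap E$ with $B(x,R')\cap(w+C(y^*,\alpha))\cap A=\emptyset$, and then $z:=w$, $x^*:=y^*$ work since $B(x,\varepsilon'')\subseteq U(x,\varepsilon)$ and $U(x,R')\subseteq B(x,R')$.

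For the forward implication, assume $A$ is $\alpha$-cone porous at $x$ with constant $R>0$, fix a rational $R'\in(0,R)$ (this will be the constant in the displayed condition), and fix $\varepsilon\in\qe_+$; I must produce the required $y^*$ and $w$. First I would apply $\alpha$-cone porosity with the auxiliary parameter $\varepsilon_0:=\varepsilon/4$ to get $z\in U(x,\varepsilon_0)$ and $x^*\in X^*\setminus\{0\}$ with $U(x,R)\cap(z+C(x^*,\alpha))\cap A=\emptyset$; since $C(\lambda x^*,\alpha)=C(x^*,\alpha)$ for $\lambda>0$, I may assume $\|x^*\|=1$. Because $\alpha<1=\|x^*\|$, there is $u_0\in X$ with $\|u_0\|=1$ and $x^*(u_0)-\alpha>\gamma:=\tfrac{1-\alpha}{2}>0$. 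Now I translate the vertex of the cone into this ``good'' direction: put $z':=z+tu_0$ with $t:=\varepsilon/4$ (so $\|z'-x\|<\varepsilon/2$), and then choose $w\in E$ with $\|w-z'\|<\delta$ and $y^*\in D\setminus\{0\}$ with $\|y^*-x^*\|<\eta$, where $\delta,\eta>0$ are small enough that
\[
t\gamma > (1+\alpha)\bigl(\delta+\eta(R'+\varepsilon)\bigr) \quad\text{and}\quad \delta<\tfrac{\varepsilon}{2}.
\]
This forces $\|w-x\|<\varepsilon$, so $w\in B(x,\varepsilon)\cap E$, as needed.

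It remains to verify $B(x,R')\cap(w+C(y^*,\alpha))\cap A=\emptyset$. Suppose $v$ lies in this set and write $v=w+u$ with $u\in C(y^*,\alpha)$. From $\|v-x\|\le R'$ and $\|w-x\|<\varepsilon$ we get $\|u\|=\|v-w\|\le R'+\varepsilon$. Writing $v-z=(w-z')+tu_0+u$, using $\|x^*\|=1$ and the estimates $x^*(w-z')\ge-\delta$, $x^*(u_0)-\alpha>\gamma$, together with
\[
x^*(u)-\alpha\|u\| = y^*(u)-\alpha\|u\|+(x^*-y^*)(u) > \alpha\|u\|\,\|y^*\|-\alpha\|u\|-\eta\|u\| \ge -(1+\alpha)\eta\|u\|
\]
(here $\|y^*\|\ge 1-\eta$), one obtains
\[
x^*(v-z)-\alpha\|v-z\| \;>\; t\gamma-(1+\alpha)\bigl(\delta+\eta\|u\|\bigr) \;\ge\; t\gamma-(1+\alpha)\bigl(\delta+\eta(R'+\varepsilon)\bigr) \;>\;0.
\]
Hence $v-z\in C(x^*,\alpha)$, and since $\|v-x\|\le R'<R$ also $v\in U(x,R)$; as $v\in A$ this contradicts the choice of $z,x^*$. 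Thus the displayed condition holds with this $R'$, which was chosen independently of $\varepsilon$, and the proof is complete.

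The step I expect to require the most care — and the reason the lemma is not wholly routine — is exactly this perturbation: replacing $x^*$ by a nearby $y^*$ can push points across the boundary of the cone, and moving the vertex from $z$ to a nearby $w$ only aggravates this, so $w+C(y^*,\alpha)$ need not sit inside $z+C(x^*,\alpha)$. The remedy is to first shift the vertex by $tu_0$ a distance $\sim\varepsilon$ into the cone, so that the gain $t\gamma$ dominates the $O(\delta)+O(\eta)$ errors coming from the two perturbations; and this shift is available precisely because $\|x^*\|=1>\alpha$ provides a direction $u_0$ with a gap $x^*(u_0)-\alpha$ bounded below by $\gamma=(1-\alpha)/2$, independently of all the other data.
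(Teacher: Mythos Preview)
Your proof is correct and follows the same core strategy as the paper's: push the vertex of the cone a little inside $z+C(x^*,\alpha)$ to create a buffer, then perturb to $w\in E$ and $y^*\in D$ and check that $(w+C(y^*,\alpha))\cap B(x,R')\subset z+C(x^*,\alpha)$. The paper streamlines the first step by choosing $w$ directly from the dense set $E$ inside the open set $(z+C(x^*,\alpha))\cap B(x,\min\{\varepsilon,R\})$, so the buffer $x^*(w-z)-\alpha\|x^*\|\|w-z\|>0$ is automatic (no auxiliary $u_0$ or $z'$ needed); it then also imposes $\|y^*\|\geq\|x^*\|$ on the approximating functional, which kills your $(1+\alpha)\eta$ error term and lets it keep the original $R$ rather than shrinking to $R'<R$.
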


\begin{proof}
The sufficiency of our condition is easy to see. Let us, therefore, assume that a given set $A$ is $\alpha$-cone porous at a given point $x\in X$, and deduce from it the desired condition.

Since $A$ is $\alpha$-cone porous at $x$, it is easy to see that there exists $R\in \qe_+$ such that 
\begin{equation}\label{E:cone-porous}
\forall \varepsilon > 0 \ \exists x^* \in X \ \exists z \in B(x,\varepsilon) \colon B(x,R) \cap \bigl(z + C(x^*,\alpha)\bigr) \cap A = \emptyset.
\end{equation}
Let $\varepsilon\in \qe_+$. Using \eqref{E:cone-porous} we find $x^* \in X^*$ and $z \in B(x,\min\{\varepsilon,R\})$ such that 
\[
B(x,R) \cap (z + C(x^*,\alpha)) \cap A = \emptyset.
\]
Choose $w \in B(x,\min\{\varepsilon,R\}) \cap (z + C(x^*,\alpha))\cap E$. Since $w-z \in C(x^*,\alpha)$, we have 
\[
x^*(w-z) - \alpha \|x^*\| \|w-z\| > 0.
\]
Using the last inequality and density of $D$ we find $y^* \in D$ such that 
\begin{itemize}
\item[(a)] $\|y^*\| \geq \|x^*\|$,

\item[(b)] $\|x^*-y^*\| < \frac{1}{2R}\bigl(x^*(w-z)-\alpha\|x^*\|\|w-z\|\bigr)$.
\end{itemize}
Now it is sufficient to prove that 
\begin{equation}\label{E:inclusion-of-cones}
B(x,R) \cap (w + C(y^*,\alpha)) \subset z + C(x^*,\alpha).
\end{equation}
Indeed, since then we have 
\[
B(x,R) \cap (w + C(y^*,\alpha)) \cap A \subset B(x,R) \cap (z + C(x^*,\alpha)) \cap A = \emptyset.
\]
To verify \eqref{E:inclusion-of-cones} take $u \in C(y^*,\alpha)$ with $w + u \in B(x,R)$.
Then we have
\begin{equation}\label{E:estimate-2R}
\|u\| = \|(u+w-x)+(x-w)\| \leq \|u+w-x\| + \|x-w\| \leq 2R.
\end{equation}
We compute
\[
\begin{split}
x^*(w+u-z) &= x^*(w-z) + x^*(u) \\
&= x^*(w-z) + y^*(u) + (x^*-y^*)(u) \\
&> x^*(w-z) + \alpha\|y^*\| \cdot \|u\| - \|x^*-y^*\|\cdot \|u\|  \\
&\geq x^*(w-z) + \alpha \|x^*\| \cdot \|u\| - \|x^*-y^*\|\cdot 2R \qquad (\text{by} \eqref{E:estimate-2R}) \\
&\geq \alpha \|x^*\| \cdot \|u\| + \alpha \|x^*\| \cdot \|w-z\|   \qquad (\text{by (b)}) \\
&\geq \alpha \|x^*\| \cdot \|u+w-z\|.
\end{split}
\]
This shows that $w+u-z \in C(x^*,\alpha)$. Consequently, we get $w + u \in z + C(x^*,\alpha)$.
\end{proof}

Now, we are ready to see the existence of a pointwise $({\P_X^{\alpha\text{-}cone}} \leftrightarrow {\P_{X_M}^{\alpha\text{-}cone}})$-models in Asplund spaces.

\begin{proposition}\label{P:point-cone}
For any suitable elementary submodel $M$ the following holds.
Let $X$ be an Asplund space and $\alpha \in [0,1)$. Then whenever $M$ contains $X$ and $\alpha$, 
$M$ is a pointwise $({\P_X^{\alpha\text{-}cone}}\leftrightarrow {\P_{X_M}^{\alpha\text{-}cone}})$-model.
\end{proposition}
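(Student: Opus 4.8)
The plan is to prove the two implications of the biconditional separately. The forward implication—that ${\P_X^{\alpha\text{-}cone}}(x,A)$ implies ${\P_{X_M}^{\alpha\text{-}cone}}(x,A\cap X_M)$ for every $A\in\mathcal P(X)\cap M$ and $x\in X_M$—is exactly Proposition~\ref{P:pointwise-cone-first} and needs no hypothesis beyond $M$ containing $X$ and $\alpha$. So I would concentrate on the reverse implication, which is where the Asplund assumption, through Theorem~\ref{T:Asplund-model}, becomes essential.

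For the reverse implication I would fix a $(*)$-elementary submodel $M$ containing $X$ and $\alpha$, a set $A\in\mathcal P(X)\cap M$, and a point $x\in X_M$ at which $A\cap X_M$ is $\alpha$-cone porous in the space $X_M$; the goal is to deduce that $A$ is $\alpha$-cone porous at $x$ in $X$. We may assume $X\neq\{0\}$, so that by elementarity $X_M\neq\{0\}$ and hence $(X_M)^*\neq\{0\}$. By Theorem~\ref{T:Asplund-model} the set $D=\{x^*\rest_{X_M}\setsep x^*\in X^*\cap M\}\setminus\{0\}$ is norm-dense in $(X_M)^*$, while $E=X\cap M$ is norm-dense in $X_M$ by the very definition of $X_M$. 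Applying Lemma~\ref{L:equivalent-def-cone} \emph{in the space} $X_M$ with these dense sets, I obtain some $R\in\qe_+$ such that for every $\varepsilon\in\qe_+$ there are $x^*\in X^*\cap M$ with $x^*\rest_{X_M}\neq 0$ and $w\in X\cap M$ with $\|w-x\|<\varepsilon$ satisfying
\[
B_{X_M}(x,R)\cap\bigl(w+C(x^*\rest_{X_M},\alpha)\bigr)\cap A\cap X_M=\emptyset .
\]
Since $x^*\in X^*\cap M$, Lemma~\ref{L:sup-fin-M} yields $\|x^*\|=\|x^*\rest_{X_M}\|$, exactly as in the proof of Proposition~\ref{P:pointwise-cone-first}, and therefore $C(x^*\rest_{X_M},\alpha)=C(x^*,\alpha)\cap X_M$; thus the displayed emptiness can be rewritten as $B_X(x,R)\cap(w+C(x^*,\alpha))\cap A\cap X_M=\emptyset$.

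The decisive step is to get rid of the intersection with $X_M$. From $\|w-x\|<\varepsilon$ one has $B_X(w,R-\varepsilon)\subset U_X(x,R)\subset B_X(x,R)$, and since $A\cap M\subset A\cap X_M$ the preceding emptiness gives that no point of $A\cap M$ lies in $B_X(w,R-\varepsilon)\cap(w+C(x^*,\alpha))$. But this assertion is precisely the relativization to $M$ of the statement ``$\forall a\in A\colon a\notin B_X(w,R-\varepsilon)\cap(w+C(x^*,\alpha))$'', all of whose parameters—$A$, $w$, $R$, $\varepsilon$, $x^*$, $\alpha$—lie in $M$; marking this quantifier-free membership formula (which only involves the norm, the evaluation $x^*(\cdot)$, and arithmetic) with $(*)$ so that it is absolute for $M$, I conclude the statement holds in $V$, i.e. $B_X(w,R-\varepsilon)\cap(w+C(x^*,\alpha))\cap A=\emptyset$. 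Given a prescribed $\varepsilon'>0$, applying the above with $\varepsilon:=\min\{\varepsilon',R/4\}$ (taken rational) produces $w\in U_X(x,\varepsilon')\cap M$ and $x^*\neq 0$ with $U_X(x,R/2)\subset B_X(w,R-\varepsilon)$; taking the vertex $z:=w$ and functional $x^*$ then shows $U_X(x,R/2)\cap(z+C(x^*,\alpha))\cap A=\emptyset$, so $A$ is $\alpha$-cone porous at $x$ in $X$ with constant $R/2$. Together with Proposition~\ref{P:pointwise-cone-first} this establishes that $M$ is a pointwise $({\P_X^{\alpha\text{-}cone}}\leftrightarrow{\P_{X_M}^{\alpha\text{-}cone}})$-model.

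I expect the decisive step above—passing from a cone avoiding $A\cap X_M$ in $X_M$ to a cone avoiding $A$ in $X$—to be the main obstacle, since in general the $X$-version of a cone that avoids $A\cap X_M$ need not avoid $A$. The two ingredients that overcome it are Lemma~\ref{L:equivalent-def-cone}, which lets the vertex $w$ and the functional $x^*$ be drawn from $M$ (the functional being exactly where Asplundness, via Theorem~\ref{T:Asplund-model}, is used), and then the elementarity of $M$, which turns ``avoids every point of $A\cap M$'' into ``avoids every point of $A$''. The only remaining bookkeeping is the harmless discrepancy between the open balls in the definition of $\alpha$-cone porosity and the closed balls in Lemma~\ref{L:equivalent-def-cone}, absorbed by infinitesimal changes of the radii.
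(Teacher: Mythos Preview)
Your argument is correct and uses the same key ingredients as the paper's proof---Proposition~\ref{P:pointwise-cone-first} for one direction, and Lemma~\ref{L:equivalent-def-cone}, Theorem~\ref{T:Asplund-model}, Lemma~\ref{L:sup-fin-M}, and absoluteness for the other. The only difference is that the paper proves the contrapositive of your reverse implication (showing that if $A$ is \emph{not} $\alpha$-cone porous at $x$ in $X$ then $A\cap X_M$ is not $\alpha$-cone porous at $x$ in $X_M$, using existential absoluteness to find a point of $A\cap M$ inside each cone), whereas you argue directly and use universal absoluteness to conclude that no point of $A$ lies in the cone; these are logically dual and equally valid.
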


\begin{proof}
Let us fix a $(*)$-elementary submodel $M$ containing $X$, $\alpha$, and a set $A \subset X$.
By Proposition~\ref{P:pointwise-cone-first}, $M$ is a pointwise $({\P_X^{\alpha\text{-}cone}}\to {\P_{X_M}^{\alpha\text{-}cone}})$-model.
Fix some $x\in X_M$ such that $A$ is not $\alpha$-cone porous at $x$.
We will show that $A\cap X_M$ is not $\alpha$-cone porous at $x$ in the space $X_M$.
Notice that, by Lemma~\ref{L:sup-fin-M}, $\|{x^*\rest_{X_M}}\| = \|x^*\|$ for every $x^*\in X^* \cap M$.
Hence, the cone $C({x^*\rest_{X_M}},\alpha)$ in the space $X_M$ equals $C(x^*,\alpha)\cap X_M$.
Thus, by Lemma~\ref{L:equivalent-def-cone} and Theorem~\ref{T:Asplund-model}, it is sufficient to prove that the following formula is true
\begin{align*}
\forall R\in\qe_+\;\exists\varepsilon\in\qe_+\;\forall z\in U(x,\varepsilon)\cap M\;
& \forall x^* \in (X^*\cap M) \setminus\{0\}\colon \\
&A \cap U(x,R) \cap X_M \cap \bigl(z+C(x^*,\alpha)\bigr) \neq \emptyset.
\end{align*}
Fix $R \in \qe_+$. As $A$ is not $\alpha$-cone porous at $x$, there exists $\varepsilon \in \qe_+$ such that
\begin{equation}\label{E:cone-in-X}
\forall z \in U(x,\varepsilon)\; \forall x^* \in X^* \setminus \{0\} \colon A \cap U(x,\tfrac13 R) \cap \bigl(z+C(x^*,\alpha)\bigr) \neq \emptyset.
\end{equation}
Let us fix $z\in U(x,\varepsilon)\cap M$ and $x^*\in (X^* \cap M) \setminus \{0\}$. Find some $x' \in U(x,\tfrac13 R) \cap M$.
Then $U(x,\tfrac13 R)\subset U(x',\tfrac23 R)$. By \eqref{E:cone-in-X}, the following formula is true
\begin{equation*}\tag{$*$}
\exists a \in A \colon a \in \bigl(z+C(x^*,\alpha)\bigr) \cap U(x',\tfrac23 R).
\end{equation*}
Using the absoluteness of the formula (and its subformula) above, there exists $a \in A \cap M$ satisfying the formula above.
It is easy to verify that $a\in U(x,R) \cap \bigl(z+C(x^*,\alpha)\bigr)$. Hence,
\[
A \cap U(x,R) \cap X_M \cap \bigr(z+C(x^*,\alpha)\bigl) \neq \emptyset.
\]
Thus, $A \cap X_M$ is not $\alpha$-cone porous at $x$ in the space $X_M$. This finishes the proof.
\end{proof}

In the remainder of the section we prove that the assumption on descriptive quality of $N_{\P}(S)$ from Proposition \ref{P:reduction-porosity-like} is satisfied for cone porosity.
We begin with the following lemma.

\begin{lemma}\label{L:dist-cone}
Let $X$ be a Banach space, $x^* \in X^*$, and $\alpha \in [0,1)$. Then for each $x\in C(x^*,\alpha)$ it is true that
$\d(X\setminus C(x^*,\alpha), x+C(x^*,\alpha)) > 0$.
\end{lemma}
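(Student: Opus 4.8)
The plan is to produce an explicit positive lower bound for the distance, depending only on $x$, $x^*$, and $\alpha$. First I would dispose of the trivial case $x^*=0$: then $C(x^*,\alpha)=\emptyset$, so there is no $x\in C(x^*,\alpha)$ and the statement holds vacuously. From now on assume $x^*\neq 0$. Fix $x\in C(x^*,\alpha)$ and set $\delta := x^*(x)-\alpha\|x\|\cdot\|x^*\|$, which is strictly positive precisely because $x\in C(x^*,\alpha)$. I claim that
\[
\d\bigl(X\setminus C(x^*,\alpha),\; x+C(x^*,\alpha)\bigr) \;\ge\; \frac{\delta}{(1+\alpha)\,\|x^*\|}\;>\;0.
\]

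To prove the claim, take an arbitrary $y\in C(x^*,\alpha)$ and an arbitrary $z\in X\setminus C(x^*,\alpha)$; the task is to bound $\|(x+y)-z\|$ from below by $\delta/((1+\alpha)\|x^*\|)$. On the one hand, evaluating $x^*$ on $(x+y)-z$ and using $x^*(y)\ge \alpha\|y\|\cdot\|x^*\|$ (since $y\in C(x^*,\alpha)$) together with $x^*(z)\le \alpha\|z\|\cdot\|x^*\|$ (since $z\notin C(x^*,\alpha)$) gives
\[
x^*\bigl((x+y)-z\bigr) \;\ge\; \delta + \alpha\|x^*\|\bigl(\|x\|+\|y\|-\|z\|\bigr),
\]
and then the triangle inequality $\|z\|\le \|z-(x+y)\|+\|x\|+\|y\|$ yields
\[
x^*\bigl((x+y)-z\bigr) \;\ge\; \delta - \alpha\|x^*\|\cdot\|(x+y)-z\|.
\]
On the other hand, trivially $x^*\bigl((x+y)-z\bigr)\le \|x^*\|\cdot\|(x+y)-z\|$. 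Combining the two estimates gives $(1+\alpha)\|x^*\|\cdot\|(x+y)-z\|\ge\delta$, which is exactly what was needed; since $y\in C(x^*,\alpha)$ and $z\in X\setminus C(x^*,\alpha)$ were arbitrary, the claim follows and with it the lemma.

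I do not anticipate a genuine obstacle. The only point that requires a moment's care is that a point $z$ lying outside the cone may have arbitrarily large norm, so the term $-\alpha\|x^*\|\|z\|$ cannot be discarded; this is precisely why one substitutes the triangle-inequality bound for $\|z\|$, which reabsorbs the troublesome term into $\alpha\|x^*\|\|(x+y)-z\|$ and lets it be moved to the left-hand side. The reason a uniform bound exists at all is, morally, that $C(x^*,\alpha)$ is a translate-invariant convex cone, but the computation above never needs to invoke convexity explicitly.
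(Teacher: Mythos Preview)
Your proof is correct. The computation goes through exactly as you indicate: the key step is absorbing the $-\alpha\|x^*\|\|z\|$ term via the triangle inequality for $\|z\|$, and the resulting explicit lower bound $\delta/((1+\alpha)\|x^*\|)$ is valid.

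The paper's proof is different and more geometric. It first observes that $C(x^*,\alpha)$ is an open convex cone, in particular closed under addition. Then, with $\delta:=\d(x,X\setminus C(x^*,\alpha))>0$ (positivity just because the cone is open), one has $U(x,\delta)\subset C(x^*,\alpha)$; for any $y\in x+C(x^*,\alpha)$ the translate $U(y,\delta)=(y-x)+U(x,\delta)$ lies in $C(x^*,\alpha)$ as well, since $y-x\in C(x^*,\alpha)$ and the cone is closed under sums. Hence every point of $x+C(x^*,\alpha)$ is at distance at least $\delta$ from the complement. Your approach has the virtue of producing an explicit, quantitative bound directly from the defining inequality, without ever naming the convex-cone property (you note this yourself); the paper's approach is shorter and makes the structural reason transparent, at the cost of hiding the constant inside the non-constructive quantity $\d(x,X\setminus C(x^*,\alpha))$. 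Either argument is perfectly adequate for the lemma's role in the paper.
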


\begin{proof}
It is easy to verify that $C(x^*,\alpha)$ is an open set and that it is a convex cone in the sense
that for any two points $y,z$ from $C(x^*,\alpha)$ and any $c > 0$ the points $cy$ and $y+z$ also belong to $C(x^*,\alpha)$.
Let $x \in C(x^*,\alpha)$. Set $\delta := \d(x,X \setminus C(x^*,\alpha))$. The number $\delta$ is positive,
since  $C(x^*,\alpha)$ is open. Take any point $y \in x+C(x^*,\alpha)$ (then $y-x\in C(x^*,\alpha)$).
Hence, $U(x,\delta)\subset C(x^*,\alpha)$, and so $U(y,\delta) = (y-x)+U(x,\delta)\subset C(x^*,\alpha)$.
Since $y\in x+C(x^*,\alpha)$ was chosen arbitrarily, we conclude that $\d(X\setminus C(x^*,\alpha), x+C(x^*,\alpha))\geq\delta>0$.
\end{proof}

\begin{proposition}\label{L:points-cone-porous}
Let $X$ be a Banach space, $\alpha \in [0,1)$, and $A \subset X$ be any set.
Then the set $S$ of all points $x \in X$ at which $A$ is $\alpha$-cone porous is Borel (of the type $G_{\delta\sigma}$).
\end{proposition}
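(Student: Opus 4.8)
The plan is to produce an explicit $G_{\delta\sigma}$ formula for $S$ by rewriting the condition defining $\alpha$-cone porosity so that every quantifier either ranges over a countable set or is an existential quantifier that only produces a union of open sets. The first step is to replace the emptiness condition by a distance condition: for $z\in X$, $x^*\in X^*\setminus\{0\}$ and $R>0$ one has $U(x,R)\cap\bigl(z+C(x^*,\alpha)\bigr)\cap A=\emptyset$ if and only if $\d\bigl(x,(z+C(x^*,\alpha))\cap A\bigr)\ge R$, with the convention $\d(x,\emptyset)=+\infty$. Using that the definition of cone porosity only asks for the \emph{existence} of a positive $R$, and that decreasing $R$ (resp.\ $\varepsilon$) only weakens the requirement, I would check — exactly as in the proof of Lemma~\ref{L:equivalent-def-cone} — that $A$ is $\alpha$-cone porous at $x$ if and only if
\[
\exists R\in\qe_+\ \forall\varepsilon\in\qe_+\ \exists z\in U(x,\varepsilon)\ \exists x^*\in X^*\setminus\{0\}\colon\ \d\bigl(x,(z+C(x^*,\alpha))\cap A\bigr)>R,
\]
where the passage from ``$\ge R$'' to ``$>R$'' costs only a factor $2$ in the choice of $R$ and the restriction of $R,\varepsilon$ to rationals is justified by choosing a rational below a given positive real.

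Having this, I would set, for each pair $R,\varepsilon\in\qe_+$,
\[
V_{R,\varepsilon}:=\Bigl\{x\in X\setsep \exists z\in U(x,\varepsilon)\ \exists x^*\in X^*\setminus\{0\}\colon \d\bigl(x,(z+C(x^*,\alpha))\cap A\bigr)>R\Bigr\},
\]
so that the reformulation above reads $S=\bigcup_{R\in\qe_+}\bigcap_{\varepsilon\in\qe_+}V_{R,\varepsilon}$. The key point is that each $V_{R,\varepsilon}$ is open. Indeed, fix $x_0\in V_{R,\varepsilon}$ together with witnesses $z,x^*$, and note that $K:=(z+C(x^*,\alpha))\cap A$ does not depend on $x$. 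Then $\|x_0-z\|<\varepsilon$ and $\d(x_0,K)>R$, so for $\eta:=\min\{\varepsilon-\|x_0-z\|,\ \d(x_0,K)-R\}>0$ and any $x\in U(x_0,\eta)$ the triangle inequality together with the fact that $y\mapsto\d(y,K)$ is $1$-Lipschitz give $\|x-z\|<\varepsilon$ and $\d(x,K)>R$; hence $x\in V_{R,\varepsilon}$ with the same witnesses $z,x^*$, and $U(x_0,\eta)\subseteq V_{R,\varepsilon}$. Consequently $\bigcap_{\varepsilon\in\qe_+}V_{R,\varepsilon}$ is $G_\delta$ for every $R\in\qe_+$, and $S$ is $G_{\delta\sigma}$, hence Borel.

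I do not expect a genuine obstacle; the only delicate point is the reformulation in the first step, and there it is important to observe that (i) converting the open-ball emptiness condition into the \emph{strict} inequality $\d(\cdot)>R$ is legitimate precisely because $R$ is not prescribed, and (ii) once $z$ and $x^*$ are fixed, the set $(z+C(x^*,\alpha))\cap A$ carries no dependence on $x$, which is what makes $\{x:\d(x,K)>R\}$ open. Alternatively, one can prove the openness of $V_{R,\varepsilon}$ directly by a perturbation argument based on Lemma~\ref{L:dist-cone} — keeping $x^*$ fixed and pushing $z$ slightly into the cone to absorb the enlargement of the ball $U(x_0,R)$ — but with the distance-function formulation this is not needed.
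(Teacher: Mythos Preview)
Your argument is correct and the overall architecture---writing $S=\bigcup_{R\in\qe_+}\bigcap_{\varepsilon\in\qe_+}(\text{open set})$---matches the paper's. The difference lies in the inner condition. The paper uses
\[
G(R,\varepsilon,x^*)=\bigl\{x\setsep \exists z\in U(x,\varepsilon)\colon \d\bigl(U(x,R)\cap(z+C(x^*,\alpha)),A\bigr)>0\bigr\},
\]
i.e.\ a set--set distance in which the first set depends on $x$. This forces two extra steps: to obtain the inclusion $S\subset\bigcup_R\bigcap_\varepsilon\bigcup_{x^*}G(R,\varepsilon,x^*)$ one must push $z$ slightly into the cone and invoke Lemma~\ref{L:dist-cone} to get a positive distance, and to prove openness one uses the translation identity $T(x',R,z+x'-x,x^*,\alpha)=(x'-x)+T(x,R,z,x^*,\alpha)$. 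Your reformulation $\d\bigl(x,(z+C(x^*,\alpha))\cap A\bigr)>R$ freezes the second argument of the distance, so the map $x\mapsto\d(x,K)$ is $1$-Lipschitz and openness of $V_{R,\varepsilon}$ is immediate; Lemma~\ref{L:dist-cone} is never needed. Thus your route is slightly more elementary; the paper's version is a bit more geometric but requires the auxiliary cone lemma. (Your passing reference to Lemma~\ref{L:equivalent-def-cone} is unnecessary: the passage from ``$\ge R$'' to ``$>R$'' and to rational $R,\varepsilon$ is the trivial monotonicity you describe, not the density argument of that lemma.)
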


\begin{proof}
For $x,z \in X$, $R > 0$, $x^* \in X^* \setminus \{0\}$, and $\alpha \in [0,1)$ we set
\[
T(x,R,z,x^*,\alpha) = U(x,R) \cap \bigl(z+C(x^*,\alpha)\bigr).
\]
First we show that
\begin{equation}\label{E:cone-porosity}
S = \bigcup_{R\in\qe_+} \bigcap_{\varepsilon\in\qe_+} \bigcup_{x^*\in X^*\setminus \{0\}} G(R,\varepsilon,x^*),
\end{equation}
where
\[
G(R,\varepsilon,x^*) = \{x \in X \setsep \exists z \in U(x,\varepsilon)\colon \d(T(x,R,z,x^*,\alpha),A) > 0 \}.
\]
It is easy to see that the inclusion ``$\supset$'' holds. To prove the opposite one consider $x \in S$.
Then we can find $R' > 0$ such that for every $\varepsilon > 0$ there is a $z' \in U(x,\varepsilon)$ and $x^* \in X^*\setminus \{0\}$ such that $T(x,R,z',x^*,\alpha) \cap A = \emptyset$. Fix $R \in (0,R') \cap \qe$ and take any $\varepsilon \in \qe_+$.
Then we find $z' \in U(x,\varepsilon)$ and $x^* \in X^* \setminus \{0\}$ such that $T(x,R,z',x^*,\alpha) \cap A = \emptyset$.
For $z \in \bigl(z' + C(x^*,\alpha)\bigr) \cap U(x,\varepsilon)$ we have $\d(T(x,R,z,x^*,\alpha),X \setminus U(x,R')) > 0$
and by Lemma \ref{L:dist-cone} we have that
\[
\d\bigl(T(x,R,z,x^*,\alpha),X \setminus (z'+C(x^*,\alpha))\bigr) > 0.
\]
Since
\[
A \subset \bigl(X \setminus U(x,R')\bigr) \cup \bigl(X \setminus (z' + C(x^*,\alpha)\bigr),
\]
we get $x \in G(R,\varepsilon,x^*)$ and the equality \eqref{E:cone-porosity} is proved.

Now it is sufficient to prove that the set $G(R,\varepsilon,x^*)$ is open.
To this end fix $R > 0$, $\varepsilon > 0$, $x^* \in X^* \setminus \{0\}$ and consider $x \in G(R,\varepsilon,x^*)$.
There exists $z \in U(x,\varepsilon)$ with $\d(T(x,R,z,x^*,\alpha),A) > 0$.
Denote $\eta = \d(T(x,R,z,x^*,\alpha),A)$. We have $\eta > 0$. For $x' \in U(x,\frac12 \eta)$ we have
\[
T(x',R,z+x'-x,x^*,\alpha) = (x'-x) + T(x,R,z,x^*,\alpha).
\]
This gives $\d(T(x',R,z+x'-x,x^*,\alpha),A) \geq \frac12 \eta > 0$.
Since $z+x'-x \in U(x',\varepsilon)$ we have $x' \in G(R,\varepsilon,x^*)$.
This implies $U(x,\frac12 \eta) \subset  G(R,\varepsilon,x^*)$ and we are done.
\end{proof}

\begin{cor}\label{C:cone-por-suslin}
Let $X$ be a Banach space, $\alpha\in [0,1)$, and $A \subset X$ be a Suslin set. Then the set $N_{\P_X^{\alpha\text{-}cone}}(A)$ is Suslin.
\end{cor}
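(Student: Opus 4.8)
The plan is to read the statement off directly from Proposition~\ref{L:points-cone-porous}. Recall that by definition
\[
N_{\P_X^{\alpha\text{-}cone}}(A) = \{x \in A \setsep \neg\, \P_X^{\alpha\text{-}cone}(x,A)\},
\]
which is exactly $A \setminus S$, where $S$ is the set of all $x \in X$ at which $A$ is $\alpha$-cone porous. First I would apply Proposition~\ref{L:points-cone-porous} to conclude that $S$ is Borel (of type $G_{\delta\sigma}$); hence $X \setminus S$ is Borel as well (of type $F_{\sigma\delta}$), and in particular it is a Suslin set. Since $A$ is Suslin by hypothesis, it then only remains to observe that
\[
N_{\P_X^{\alpha\text{-}cone}}(A) = A \cap (X \setminus S)
\]
is the intersection of two Suslin sets, and therefore Suslin.

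The only thing worth spelling out is the standard descriptive-set-theoretic fact that every Borel subset of a metric space is Suslin and that the class of Suslin sets is closed under finite (indeed countable) intersections; both are routine from the definition of the Suslin operation via monotone Suslin schemes of closed sets, so no genuine obstacle arises here -- all the real work was carried out in Proposition~\ref{L:points-cone-porous}. I would also note that this yields precisely the descriptive hypothesis required in Proposition~\ref{P:reduction-porosity-like}, namely that $N_{\P}(S)$ is Suslin whenever $S$ is Suslin, so that Corollary~\ref{C:cone-por-suslin} together with Proposition~\ref{P:point-cone} will let us apply Proposition~\ref{P:reduction-porosity-like} to $\P = \P_X^{\alpha\text{-}cone}$.
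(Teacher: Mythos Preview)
Your argument is correct and is exactly the intended one: the paper states this as an immediate corollary of Proposition~\ref{L:points-cone-porous} without further proof, and your computation $N_{\P_X^{\alpha\text{-}cone}}(A) = A \cap (X \setminus S)$ together with the closure of Suslin sets under intersection is precisely what is implicit there.
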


\begin{theorem}\label{T:cone}
For any suitable elementary submodel $M$ the following holds.
Let $X$ be an Asplund space, $A \subset X$ be Suslin, and $\alpha\in [0,1)$.
Then whenever $M$ contains $X$, $A$, and $\alpha$, the following are true:
\begin{align*}
A \text{ is $\alpha$-cone porous in the space } X           &\iff A \cap X_M \text{ is $\alpha$-cone porous in the space } X_M, \\
A \text{ is $\sigma$-$\alpha$-cone porous in the space } X  &\iff A \cap X_M \text{ is $\sigma$-$\alpha$-cone porous in the space } X_M, \\
A \text{ is cone small in the space } X                     &\iff A \cap X_M \text{ is cone small in the space } X_M.
\end{align*}
\end{theorem}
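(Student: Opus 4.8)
The plan is to obtain the first two equivalences as a direct instance of Proposition~\ref{P:reduction-porosity-like}, and then to derive the statement about cone smallness from the $\sigma$-porosity equivalence by a monotonicity argument in the cone parameter.

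First I would fix a $(*)$-elementary submodel $M$ containing $X$, $A$, and $\alpha$, and put $\P = \P_X^{\alpha\text{-}cone}$ and $\P' = \P_{X_M}^{\alpha\text{-}cone}$; since $M$ is suitable and contains $X$ and $\alpha$ we have $\P\in M$ (it is the object uniquely determined by its defining formula from $X$ and $\alpha$, so Lemma~\ref{L:unique-M} applies), and recall that $\P$ and $\P'$ are porosity-like relations on $X$ and on $X_M$. Both hypotheses of Proposition~\ref{P:reduction-porosity-like} are now supplied by the preceding results: Corollary~\ref{C:cone-por-suslin} (via Proposition~\ref{L:points-cone-porous}) gives that $N_{\P}(S)$ is Suslin whenever $S\subset X$ is Suslin, and Proposition~\ref{P:point-cone} gives that, as $M$ contains $X$ and $\alpha$, $M$ is a pointwise $(\P\leftrightarrow\P')$-model. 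Proposition~\ref{P:reduction-porosity-like} then yields precisely that $A$ is $\alpha$-cone porous in $X$ iff $A\cap X_M$ is $\alpha$-cone porous in $X_M$, together with the corresponding equivalence for $\sigma$-$\alpha$-cone porosity; this is the first half of the theorem.

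For the third equivalence I would first record the elementary monotonicity observation that $0\le\beta_1\le\beta_2<1$ implies $C(x^*,\beta_2)\subset C(x^*,\beta_1)$, whence, at any point and with the same witnesses $R,z,x^*$, $\beta_1$-cone porosity entails $\beta_2$-cone porosity, and likewise for $\sigma$-porosity. Consequently, in any Banach space a set is cone small if and only if it is $\sigma$-$q$-cone porous for every $q\in\qe_+\cap(0,1)$: given $\beta\in(0,1)$, choose a rational $q\in(0,\beta]$, and then $\sigma$-$q$-cone porosity implies $\sigma$-$\beta$-cone porosity. Since $\qe_+\in M$ is countable, $\qe_+\cap(0,1)\subset M$, so every such $q$ lies in $M$. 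Applying the second equivalence above with $\alpha$ replaced by $q$ — legitimate for the same submodel $M$, whose relevant hypotheses (including $\P_X^{q\text{-}cone}\in M$) hold for every $q$ — gives, for each $q\in\qe_+\cap(0,1)$, that $A$ is $\sigma$-$q$-cone porous in $X$ iff $A\cap X_M$ is $\sigma$-$q$-cone porous in $X_M$. Taking the conjunction over all such $q$ and invoking the characterization of cone smallness in both $X$ and $X_M$ completes the proof.

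The proof introduces no genuinely new difficulty: the analytic substance sits in Theorem~\ref{T:Asplund-model}, Proposition~\ref{P:point-cone}, and Proposition~\ref{L:points-cone-porous}, all already available, feeding into Proposition~\ref{P:reduction-porosity-like}. The one step deserving care is that cone smallness quantifies over all $\alpha\in(0,1)$ whereas $M$ is organized around a single $\alpha$; this is exactly what the monotonicity observation handles, together with the fact that the countably many rational parameters automatically lie in $M$, so that no new elementary submodel need be introduced for them.
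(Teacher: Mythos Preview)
Your proposal is correct and follows essentially the same route as the paper: invoke Proposition~\ref{P:reduction-porosity-like} together with Proposition~\ref{P:point-cone} and Corollary~\ref{C:cone-por-suslin} for the first two equivalences, and reduce cone smallness to the countable family of rational parameters $\beta\in(0,1)\cap\qe$ (all of which lie in $M$). Your explicit monotonicity argument for the rational reduction is exactly the ``observation'' the paper alludes to without spelling out.
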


\begin{proof}
Let us fix a $(*)$-elementary submodel $M$ containing $X$, $A$, and $\alpha$. Then the following formula is clearly true
\begin{equation*}\tag{$*$}
\exists {\bf R} \text{ point-set relation on } X \; \forall x \in X \; \forall B \subset X \colon
(B \text{ is $\alpha$-cone porous at } x \iff (x,B)\in {\bf R}).
\end{equation*}
The absoluteness of this formula and its subformula implies that ${\P_X^{\alpha\text{-}cone}} \in M$.
The first two parts of the theorem now follow using Propositions \ref{P:reduction-porosity-like} and \ref{P:point-cone} and Corollary \ref{C:cone-por-suslin}.
To prove the last part one just needs to observe that any set in any Banach space is cone small if and only if it is $\sigma$-$\beta$-cone porous for each $\beta \in (0,1) \cap \qe$.
\end{proof}

\begin{cor}\label{C:cone-small}
Let $X$ be an Asplund space and $A \subset X$ be a Suslin set.
Then for every separable space $V_0\subset X$ there exists a closed separable space $V \subset X$ such that
$V_0 \subset V$ and
\[
A \text{ is cone small in } X \iff A \cap V \text{ is cone small in } V.
\]
\end{cor}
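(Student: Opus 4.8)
The plan is to obtain the corollary as an immediate consequence of Theorem~\ref{T:cone}: I will produce a suitable elementary submodel $M$ whose associated subspace $X_M=\ov{X\cap M}$ contains the prescribed separable space $V_0$, and then take $V:=X_M$.

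First I would fix a countable set $D_0\subset V_0$ dense in $V_0$, which exists since $V_0$ is separable. Unwinding the convention behind the phrase ``for any suitable elementary submodel'' in Theorem~\ref{T:cone}, there are a finite list of formulas $\psi_1,\dots,\psi_k$ and a countable set $Y$ such that, for every $M\prec(\psi_1,\dots,\psi_k;Y)$ containing $X$, $A$ and some $\alpha\in[0,1)$, the equivalences of Theorem~\ref{T:cone} hold for $M$. I would then set $X_0:=Y\cup D_0\cup\{X,A,0\}$, which is countable, and apply Theorem~\ref{T:countable-model} to obtain a countable set $M\supset X_0$ for which $\psi_1,\dots,\psi_k$ are absolute, i.e. $M\prec(\psi_1,\dots,\psi_k;X_0)$. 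Thus $M$ is a suitable elementary submodel containing $X$, $A$ and $\alpha=0$, with $D_0\subset M$.

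It then remains to check that $V:=X_M$ works. Since $M$ contains $X$ as a normed linear space, $V$ is a closed separable subspace of $X$ by the corresponding proposition in Section~\ref{S:submodels}. Because $D_0\subset X$ and $D_0\subset M$, we have $D_0\subset X\cap M\subset V$; and since $V$ is closed and $D_0$ is dense in $V_0$, this yields $V_0=\ov{D_0}\subset V$, so $V_0\subset V$ as required. Finally, the third equivalence of Theorem~\ref{T:cone} applied to $M$ reads
\[
A\text{ is cone small in }X\iff A\cap X_M\text{ is cone small in }X_M,
\]
and since $X_M=V$ and $A\cap X_M=A\cap V$, this is precisely the asserted equivalence.

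There is no genuine obstacle here; the only point requiring a little care is the bookkeeping that forces $V_0\subseteq X_M$, namely inserting a countable dense subset $D_0$ of $V_0$ into the ``seed'' set $X_0$ before invoking Theorem~\ref{T:countable-model}. Everything else is a verbatim application of Theorem~\ref{T:cone} together with the elementary facts about submodels recalled earlier.
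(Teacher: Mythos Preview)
Your proof is correct and is exactly the intended derivation: the paper states the corollary without proof because it follows immediately from Theorem~\ref{T:cone} by choosing a suitable elementary submodel $M$ that contains a countable dense subset of $V_0$ and setting $V=X_M$. The only minor imprecision is the line ``$V_0=\ov{D_0}$'', which should read $V_0\subset\ov{D_0}$ since $V_0$ is not assumed closed; this does not affect the argument.
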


\section{Applications}

\begin{definition}[\cite{NLT}]
Let $X$ be a real Banach space, $G \subset X$ be open. A function $f\colon G \to \er$ is called \emph{approximately convex at $x_0 \in G$}
if for every $\varepsilon >0$ there exists $\delta>0$ such that
\[
f(\lambda x + (1-\lambda)y)\leq \lambda f(x) + (1-\lambda) f(y) + \varepsilon \lambda(1-\lambda) \| x-y \|
\]
whenever $\lambda \in [0,1]$ and $x,y \in U(x_0,\delta)$.
We say $f$ is \emph{approximately convex on $G$} if it is approximately convex at each $x_0 \in G$.
\end{definition}

\begin{remark}
The class of approximately convex functions includes semiconvex functions and strongly paraconvex functions
(for definitions see, e.g., \cite{Zajicek-5}).
\end{remark}

We shall apply our result about cone small sets to prove the following generalization of \cite[Theorem 5.5]{Zajicek-5} to nonseparable Asplund spaces.
Note that the following theorem is also a strengthening of \cite[Theorem 5.9]{Zajicek-5} which states that a continuous approximately convex function
on an Asplund space is Fr\'echet differentiable except for points from a union of a cone small set and a $\sigma$-cone supported set.
Note also that unlike \cite[Theorem 5.5]{Zajicek-5}, our Theorem \ref{T:appl-approx-convexity} states that the exceptional set is cone small and not angle small. However, these two notions are equivalent if $X$ is separable.

\begin{theorem}\label{T:appl-approx-convexity}
Let $X$ be an Asplund space and $G \subset X$ be open. Let $f\colon G \to \er$ be a continuous and approximately convex function.
Then the set $N_F(f)$ of all points of $G$ at which $f$ is not Fr\'echet differentiable is cone small.
\end{theorem}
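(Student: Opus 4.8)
The plan is to reduce the nonseparable statement to the already-known separable case via Theorem~\ref{T:cone}. First I would recall the separable prototype: by \cite[Theorem 5.5]{Zajicek-5}, if $V$ is a \emph{separable} Asplund space and $g\colon H\to\er$ is continuous and approximately convex on an open set $H\subset V$, then the set of points of non-Fr\'echet-differentiability of $g$ is angle small, hence (since $V$ is separable) cone small in $V$. So the whole game is to transport this information up to the nonseparable space $X$ through a suitable elementary submodel.

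The key steps, in order, are as follows. Fix a $(*)$-elementary submodel $M$ containing $X$, $G$, $f$, and the exceptional set $N_F(f)$; note that $N_F(f)$ can be written in an absolute way (it is the complement of a $G_{\delta\sigma}$-type set, as the set of Fr\'echet-differentiability points is Borel by a standard argument, so $N_F(f)$ is in particular Suslin), and by absoluteness $N_F(f)\in M$ and it is a Suslin subset of $X$. By Proposition on suitable submodels, $X_M$ is a closed separable subspace of $X$ and $G\cap X_M$ is open in $X_M$; moreover $f\rest_{G\cap X_M}$ is again continuous and approximately convex on $G\cap X_M$ (the defining inequality of approximate convexity is inherited by subspaces: restricting $x,y$ to $U(x_0,\delta)\cap X_M$ only weakens the quantifier, and continuity passes to the subspace trivially). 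Now the crucial claim is that the non-Fr\'echet-differentiability set of the restricted function, computed \emph{in $X_M$}, is exactly $N_F(f)\cap X_M$. The inclusion $N_F(f)\cap X_M \subset N_F(f\rest_{X_M})$ is immediate since Fr\'echet differentiability at a point in the big space implies it in the subspace. For the reverse inclusion I would use that $X_M=\ov{X\cap M}$ together with Theorem~\ref{T:Asplund-model} (functionals from $M$ are dense in $(X_M)^*$) and the fact that the candidate derivative at a point $x\in X\cap M$ lies in $M$ by absoluteness/elementarity, so that a functional witnessing non-differentiability in $X_M$ can be used to produce one in $X$; a density/approximation argument extends this from $X\cap M$ to all of $X_M$. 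Hence $N_F(f\rest_{G\cap X_M}) = N_F(f)\cap X_M$.

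Having established this identification, apply the separable result \cite[Theorem 5.5]{Zajicek-5} to the separable Asplund space $X_M$ and the function $f\rest_{G\cap X_M}$: the set $N_F(f)\cap X_M = N_F(f\rest_{G\cap X_M})$ is angle small, hence cone small in $X_M$. Now invoke Theorem~\ref{T:cone} with $A=N_F(f)$ (a Suslin set, $\alpha$ ranging over $(0,1)\cap\qe$): since $A\cap X_M$ is cone small in $X_M$, we conclude that $A=N_F(f)$ is cone small in $X$. That is exactly the assertion of Theorem~\ref{T:appl-approx-convexity}.

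The main obstacle I expect is the identification step $N_F(f\rest_{G\cap X_M}) = N_F(f)\cap X_M$, specifically the inclusion $\subseteq$. One must rule out the possibility that $f$ is differentiable at $x$ along all of $X_M$ but fails to be Fr\'echet differentiable at $x$ in $X$; for a general continuous function this can certainly happen, so the argument must genuinely exploit approximate convexity of $f$ on the full space $X$ (this forces a one-sided convexity-type control of difference quotients in every direction, including directions outside $X_M$) combined with the Asplund property of $X$ and the density of $M$-functionals in $(X_M)^*$ from Theorem~\ref{T:Asplund-model}. Getting this localization argument clean — tracking $\varepsilon$'s and $\delta$'s between the two spaces and making sure the witnessing functional can be taken in $M$ — is the technical heart of the proof; everything else is bookkeeping with elementary submodels and an appeal to the already-proved Theorem~\ref{T:cone}.
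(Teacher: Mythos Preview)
Your overall strategy --- identify $N_F(f)\cap X_M$ with $N_F(f\rest_{G\cap X_M})$, apply the separable result \cite[Theorem 5.5]{Zajicek-5}, and lift back via Theorem~\ref{T:cone} --- is sound, and is in fact the mechanism behind the paper's final (unproved) Proposition. But your handling of the identification step is muddled: you have the two inclusions swapped. Fr\'echet differentiability in $X$ trivially restricts to $X_M$, so the immediate inclusion is $N_F(f\rest_{G\cap X_M})\subset N_F(f)\cap X_M$, not the one you claim; your sketch for the ``reverse inclusion'' (non-differentiability in $X_M$ yields non-differentiability in $X$) is again this easy direction. The genuine obstacle is $N_F(f)\cap X_M\subset N_F(f\rest_{G\cap X_M})$: if $f$ fails to be Fr\'echet differentiable at $x\in X_M$ in $X$, one must see this already inside $X_M$. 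This is a known separable-determination theorem for Fr\'echet differentiability from \cite{Cuth}, and its proof uses only elementarity of $M$ together with $f\in M$ --- neither approximate convexity nor the Asplund hypothesis enters, contrary to what you suggest.

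The paper proves Theorem~\ref{T:appl-approx-convexity} by a genuinely different route that sidesteps the differentiability identification altogether. It takes a selection $g$ of the Fr\'echet subdifferential $\partial^F f$, which is submonotone (so $-g$ is LAN) by \cite[Lemma 2.5]{Zajicek-5}. Lemma~\ref{L:red-monot} then shows $g$ is continuous off a cone small set: the separable reduction there goes through the separable determination of \emph{continuity} (\cite[Theorem 5.1]{Cuth}), a far more elementary statement than that for Fr\'echet differentiability, combined with the separable case \cite[Lemma 3]{Zajicek-2}. Finally, \cite[Lemma 5.4]{Zajicek-5} gives Fr\'echet differentiability of $f$ at every continuity point of $g$. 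So both routes work; yours is more direct once the inclusion error is fixed and the correct citation supplied, while the paper's trades the heavier differentiability reduction for the lighter continuity reduction at the cost of the detour through the subdifferential.
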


To prove the theorem we will need several notions and a lemma.
The notion of LAN mapping is defined and studied in \cite{Zajicek-2}.

\begin{definition}
Let $X$ be a Banach space and $G \subset X$ be open.
We say a (singlevalued) mapping $g\colon G \to X^*$ is \emph{LAN} (\emph{locally almost nonincreasing}) if for any $a \in G$ and $\varepsilon > 0$
there exists $\delta > 0$ such that for any $x_1, x_2 \in U(a,\delta)$ we have
\[
\langle g(x_1)-g(x_2), x_1-x_2 \rangle \leq \varepsilon \| x_1 - x_2 \|.
\]

We say a multivalued mapping $T \colon G \to X^*$ is \emph{submonotone on $G$}
if for any $a\in G$ and $\varepsilon > 0$ there exists $\delta > 0$ such that for any $x_1,x_2 \in U(a,\delta)$,
$x_1^* \in T(x_1)$, and $x_2^* \in T(x_2)$ we have
\[
\langle x_1^*-x_2^*, x_1-x_2 \rangle \geq - \varepsilon \| x_1 - x_2 \|.
\]
\end{definition}

\begin{remark}\label{R:LAN-monot}
Clearly $T$ is LAN if and only if $-T$ is singlevalued and submonotone.
\end{remark}

The following lemma generalizes \cite[Lemma 3]{Zajicek-2} to general Asplund spaces.

\begin{lemma}\label{L:red-monot}
Let $X$ be Asplund, $G \subset X$ be open and $g\colon G \to X^*$ be LAN.
Then $g$ is continuous at all points of $G$ except those which belong to a cone small set.
\end{lemma}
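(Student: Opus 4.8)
The plan is to reduce the statement to its separable case \cite[Lemma~3]{Zajicek-2} by means of the separable determination of cone smallness (Theorem~\ref{T:cone}). Write $G=\dom g$ and let $N\subseteq G$ be the set of points at which $g$ is discontinuous. I would first note that $N$ is Suslin: with $\operatorname{osc}_g(x)=\inf_{\delta\in\qe_+}\sup\{\|g(y)-g(z)\|\setsep y,z\in U(x,\delta)\cap G\}$ the pointwise oscillation of $g$, one has $N=\bigcup_{k\in\en}\{x\in G\setsep\operatorname{osc}_g(x)\ge 1/k\}$, and since $\operatorname{osc}_g$ is upper semicontinuous on $G$ each of these sets is relatively closed in $G$; hence $N$ is $F_\sigma$ in $X$, in particular Suslin. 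Then I fix a $(*)$-elementary submodel $M$ containing $X$, $g$ and $\operatorname{osc}_g$ — so that $G\in M$ and $N\in M$, and, by adjoining the appropriate formula to the defining list of $M$, also $X^*$ with its norm and linear operations lies in $M$ — and I put $V=X_M$. This $V$ is a closed separable subspace of the Asplund space $X$, hence a separable Asplund space, and $G\cap V$ is open in $V$. Define the reduced mapping $\ov g\colon G\cap V\to V^*$ by $\ov g(x)=g(x)\rest_{V}$. It is LAN: given $a\in G\cap V$ and $\varepsilon>0$, the $\delta$ witnessing the LAN property of $g$ at $a$ also works for $\ov g$, because $\langle\ov g(x_1)-\ov g(x_2),x_1-x_2\rangle=\langle g(x_1)-g(x_2),x_1-x_2\rangle$ whenever $x_1,x_2\in V$. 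By \cite[Lemma~3]{Zajicek-2} (valid since $V$ is separable Asplund, where ``angle small'' and ``cone small'' coincide), the set $\widetilde N$ of discontinuity points of $\ov g$ in $G\cap V$ is cone small in $V$; and since $\|\ov g(y)-\ov g(z)\|\le\|g(y)-g(z)\|$, continuity of $g$ at a point of $G\cap V$ implies continuity of $\ov g$ there, so $\widetilde N\subseteq N\cap V$.

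The heart of the proof is the reverse inclusion $N\cap V\subseteq\widetilde N$. I would first establish that $\operatorname{osc}_{\ov g}(x)=\operatorname{osc}_g(x)$ for every $x\in X\cap M\cap G$. Fix such an $x$ and a rational $\delta>0$. Whenever $x_1,x_2\in X\cap M\cap G$ we have $g(x_1),g(x_2)\in M$, hence $g(x_1)-g(x_2)\in X^*\cap M$, and Lemma~\ref{L:sup-fin-M} applied to this functional on the ball $U(0,1)$ (note $0\in X_M$) gives $\|g(x_1)-g(x_2)\|=\|(g(x_1)-g(x_2))\rest_{V}\|=\|\ov g(x_1)-\ov g(x_2)\|$. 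Moreover, by absoluteness for $M$ of the formula ``$\exists x_1,x_2\in U(x,\delta)\cap G\colon\|g(x_1)-g(x_2)\|>\eta$'' (for $\eta\in\qe_+$ and all parameters in $M$), the supremum of $\|g(x_1)-g(x_2)\|$ over $x_1,x_2\in U(x,\delta)\cap G$ equals its supremum over $x_1,x_2\in U(x,\delta)\cap G\cap M$. Since $X\cap M\cap G\subseteq V\cap G$ and $U(x,\delta)\cap V$ is the $\delta$-ball about $x$ in $V$, these two facts yield $\sup\{\|g(x_1)-g(x_2)\|\setsep x_1,x_2\in U(x,\delta)\cap G\}\le\sup\{\|\ov g(x_1)-\ov g(x_2)\|\setsep x_1,x_2\in U_V(x,\delta)\cap G\}$; taking the infimum over rational $\delta$ and using the trivial inequality $\operatorname{osc}_{\ov g}\le\operatorname{osc}_g$ on $V\cap G$ gives $\operatorname{osc}_{\ov g}(x)=\operatorname{osc}_g(x)$.

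To pass from $X\cap M\cap G$ to the whole of $N\cap V$, observe that for each $k\in\en$ the set $F_k=\{x\in G\setsep\operatorname{osc}_g(x)\ge 1/k\}$ lies in $M$, so by \cite[Lemma~2.10]{Cuth-Rmoutil} the set $F_k\cap M$ is dense in $F_k\cap V$. As $F_k\cap M\subseteq X\cap M\cap G$, the identity just proved yields $F_k\cap M\subseteq\{x\in G\cap V\setsep\operatorname{osc}_{\ov g}(x)\ge 1/k\}$; the latter set is closed in $G\cap V$ by upper semicontinuity of $\operatorname{osc}_{\ov g}$, so taking closures inside $G\cap V$ we get $F_k\cap V\subseteq\{x\in G\cap V\setsep\operatorname{osc}_{\ov g}(x)\ge 1/k\}\subseteq\widetilde N$. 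Hence $N\cap V=\bigcup_{k\in\en}(F_k\cap V)\subseteq\widetilde N$, and therefore $N\cap V=\widetilde N$ is cone small in $V$. Since $N$ is a Suslin subset of $X$ with $N\in M$ and $X$ is Asplund, the ``cone small'' part of Theorem~\ref{T:cone} now gives that $N$ is cone small in $X$, as required.

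The step I expect to be the main obstacle is precisely this last promotion: upgrading the pointwise identity $\operatorname{osc}_{\ov g}=\operatorname{osc}_g$, which is only available on the ``concrete'' points $X\cap M\cap G$, to the inclusion $N\cap V\subseteq\widetilde N$; this requires combining, all at once, upper semicontinuity of the oscillation, the membership $F_k\in M$, and the density of $F_k\cap M$ in $F_k\cap V$. The remaining ingredients — that $N$ is $F_\sigma$, that $\ov g$ is LAN, that $V$ is separable Asplund, and the bookkeeping placing $X^*$, its norm, and $\operatorname{osc}_g$ in $M$ — are routine.
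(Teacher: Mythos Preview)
Your proof is correct and follows the same overall strategy as the paper: pass to a separable subspace $V=X_M$ via an elementary submodel, apply the separable result \cite[Lemma~3]{Zajicek-2}, and transfer back using Theorem~\ref{T:cone}. The difference lies in how the key identity between the discontinuity sets is obtained. The paper works with $g\rest_{X_M}\colon G\cap X_M\to X^*$ and simply invokes \cite[Theorem~5.1]{Cuth} (separable determination of continuity) as a black box to get $B=A\cap X_M$ and $A\in M$ in one line. You instead pass to $\ov g=g(\cdot)\rest_V\colon G\cap V\to V^*$ and argue directly: first establishing $\operatorname{osc}_{\ov g}=\operatorname{osc}_g$ on $X\cap M\cap G$ via Lemma~\ref{L:sup-fin-M} and absoluteness, then propagating this to all of $N\cap V$ using upper semicontinuity of oscillation together with density of $F_k\cap M$ in $F_k\cap V$. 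Your route is more self-contained (it does not rely on the continuity-reduction theorem from \cite{Cuth}), and your choice of codomain $V^*$ rather than $X^*$ makes the hypothesis of \cite[Lemma~3]{Zajicek-2} --- a LAN map into the dual of its \emph{domain} space --- fit more transparently; the trade-off is a noticeably longer argument for a fact the paper simply quotes.
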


\begin{proof}
Denote by $A$ the set of all points of $G$ at which $g$ is not continuous (then $A$ is Borel) and
let us fix a $(*)$-elementary submodel $M$ containing $X$ and $g$. Then $X_M$ is a Banach space with separable dual and $g\rest_{X_M}$ is clearly LAN. Denote by $B$ the set of all points of $G\cap X_M$ (the intersection is nonempty) at which $g\rest_{X_M}$ is not continuous.
By \cite[Lemma 3]{Zajicek-2}, $B$ is angle small in $X_M$.  But \cite[Theorem 5.1]{Cuth} gives that $B = A \cap X_M$ and that $A\in M$.
Hence, by Theorem \ref{T:cone}, $A$ is cone small.
\end{proof}

\begin{definition}
Let $X$ be a Banach space, $G\subset X$ be open, and $f \colon G \to \er$. The Fr\'echet subdifferential of $f$ at $a$ is defined by
\[
\partial^F f(a) = \Bigl\{ x^*\in X^*\setsep \liminf_{h\to 0} \frac{f(a+h)-f(a)-\langle x^*, h \rangle}{\| h \|}\geq 0\Bigr\}.
\]
\end{definition}

\begin{proof}[Proof of Theorem \ref{T:appl-approx-convexity}]
By \cite[Lemma 2.5 (ii) and (iii)]{Zajicek-5} the multivalued mapping $x\mapsto \partial^F f(x)$ is submonotone on $G$. Choose any selection $g$ of $\partial^F f$ on $G$; then $g$ is also submonotone. Lemma~\ref{L:red-monot} (together with Remark~\ref{R:LAN-monot}) implies that $g$ is continuous on $G$ up to a cone small set. Now, \cite[Lemma 5.4]{Zajicek-5} says that $f$ is Fr\'echet differentiable at points of continuity of $g$, concluding the proof.
\end{proof}

Another possible application of Theorem \ref{T:cone} is the following strengthening of \cite[Proposition 4.2]{Vesely}
(for definitions see \cite{Vesely}).

\begin{proposition}
Let $Y$ be a countably Daniell ordered Banach space with the Radon-Nikod\'ym property. Assume that
\begin{itemize}
\item[(a)] either $X$ is a closed subspace of $c_0(\Delta)$, where $\Delta$ is an uncountable set,

\item[(b)] or $X=C(K)$, where $K$ is scattered compact topological space.
\end{itemize}
Let $A \subset X$ be an open convex set and $f \colon A \to Y$ be a continuous convex operator. Then $f$ is Fr\'echet differentiable on $A$ except for a cone small $\Gamma$-null set.
\end{proposition}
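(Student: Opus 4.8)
The plan is to reduce to the separable case and then invoke Theorem~\ref{T:cone}. First I would record that $X$ is Asplund in both cases: a closed subspace of $c_0(\Delta)$ is Asplund because $c_0(\Delta)$ is, and $C(K)$ is Asplund precisely because $K$ is scattered. Next I would observe that the exceptional set $N := N_F(f)$ of points of $A$ at which $f$ is not Fr\'echet differentiable is Borel, hence Suslin: for a continuous convex operator this is routine once one characterizes Fr\'echet differentiability at $x$ by a countable system of conditions on the second differences $f(x+h)+f(x-h)-2f(x)$ together with the requirement that the subdifferential $\partial f(x)$ be a singleton, which keeps the argument away from the (possibly nonseparable) quantifier over $\mathcal{L}(X,Y)$. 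By \cite[Proposition 4.2]{Vesely} (applicable directly, since $X$ is as in (a) or (b) and $Y$ is a countably Daniell ordered Banach space with the Radon-Nikod\'ym property) the set $N$ is $\Gamma$-null; it therefore remains only to show that $N$ is cone small.

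For cone smallness I would argue exactly as in the proof of Lemma~\ref{L:red-monot}. Fix a $(*)$-elementary submodel $M$ containing $X$, $Y$ (with its ordering and norm), $A$, and $f$. Then $X_M$ is a closed separable subspace of $X$, hence a separable Asplund space, $A\cap X_M$ is a nonempty open convex subset of $X_M$, and $f\rest_{A\cap X_M}$ is again a continuous convex operator into $Y$. A standard separable reduction of Fr\'echet differentiability---the argument behind \cite[Theorem 5.1]{Cuth}, adapted in the obvious way from real-valued functions to $Y$-valued convex operators---shows that $N\in M$ and that $N\cap X_M$ is exactly the set of points of $A\cap X_M$ at which $f\rest_{A\cap X_M}$ is not Fr\'echet differentiable. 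In the separable space $X_M$ the corresponding instance of \cite[Proposition 4.2]{Vesely} gives that this set is angle small, hence cone small (the two notions agree in separable spaces). Finally, Theorem~\ref{T:cone}, applied to the Suslin set $N$ in the Asplund space $X$, transfers cone smallness: since $N\cap X_M$ is cone small in $X_M$, the set $N$ is cone small in $X$. Combined with the $\Gamma$-nullity obtained above, this proves the proposition.

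The part I expect to be the main obstacle is the pair of bookkeeping reductions in the separable setting. One must verify that Fr\'echet differentiability of a $Y$-valued convex operator is a Suslin condition and, more delicately, that it passes correctly to $A\cap X_M$, i.e. that $f$ is Fr\'echet differentiable at $x\in A\cap X_M$ if and only if $f\rest_{A\cap X_M}$ is; the subtlety is that the candidate derivative a priori lives in the possibly nonseparable space $\mathcal{L}(X,Y)$, so one has to exploit convexity (through the subdifferential and second differences) to pin it down inside $M$. One also has to make sure that the separable statement being invoked delivers an \emph{angle small} (equivalently cone small) exceptional set, not merely a $\Gamma$-null one; should the literature only provide $\Gamma$-nullity in the separable case, the cone-smallness would instead have to be extracted from the submonotonicity of the subdifferential mapping $x\mapsto\partial f(x)$ by a porosity-type argument in the spirit of the proof of Theorem~\ref{T:appl-approx-convexity}. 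Once these two points are settled, the passage to nonseparable $X$ via Theorem~\ref{T:cone} is immediate.
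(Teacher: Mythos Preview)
Your proposal is correct and is precisely the strategy the paper has in mind: the paper omits the proof entirely, noting that it is identical to the proof of \cite[Proposition~4.2]{Vesely} with Theorem~\ref{T:cone} substituted for the $\sigma$-lower-porosity separable determination theorem \cite[Theorem~5.4]{Cuth-Rmoutil}. The bookkeeping obstacles you anticipate (the Suslin quality of $N_F(f)$, the separable reduction of Fr\'echet differentiability for $Y$-valued maps, and the fact that the separable case delivers an angle small exceptional set) are already handled in \cite{Vesely} and in the separable-reduction machinery of \cite{Cuth}, \cite{Cuth-Rmoutil}, so no new work is required there.
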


The only difference from the original assertion is that, instead of $\sigma$-lower porous, we have the exceptional set cone small which is a stronger assertion.
We shall, however, omit the proof, as there is no difference from the proof in \cite{Vesely};
one just needs to use our Theorem \ref{T:cone} instead of \cite[Theorem~5.4]{Cuth-Rmoutil} which is an analogue of \ref{T:cone} for $\sigma$-lower porosity.

Note that we also obtain an analogue of \cite[Proposition CR]{Vesely} for cone smallness. Since this could be of some independent interest,
it is, perhaps, worth stating (see also \cite[Theorem 1.2]{Cuth-Rmoutil}).

\begin{proposition}
Let $X, Y$ be Banach spaces, $G \subset X$ be an open set, and $f \colon G \to Y$ an arbitrary mapping.
Then for every separable space $V_0 \subset X$ there exists a closed separable space $V \subset X$ such that $V_0 \subset V$
and that the following are equivalent:
\begin{itemize}
\item[(i)]  the set of all points where $f$ is not Fr\'echet differentiable is cone small in $X$,

\item[(ii)] the set of all points where $f\rest _{V \cap G}$ is not Fr\'echet differentiable is cone small in $V$.
\end{itemize}
\end{proposition}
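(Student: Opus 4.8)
The plan is to run the elementary-submodel scheme used for the analogous statements \cite[Theorem 1.2]{Cuth-Rmoutil} and \cite[Proposition CR]{Vesely}, feeding in Theorem~\ref{T:cone} in place of the separable-determination results for $\sigma$-upper (resp. $\sigma$-lower) porosity used there. Concretely, I would fix a $(*)$-elementary submodel $M$ containing $X$, $Y$, $G$, $f$, together with a fixed countable dense subset of $V_0$, and set $V:=X_M$; then $V$ is a closed separable subspace of $X$ with $V_0\subseteq V$. Write $A$ for the set of points of $G$ at which $f$ is not Fréchet differentiable and $B$ for the set of points of $V\cap G$ at which $f\rest_{V\cap G}$ is not Fréchet differentiable. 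Since $A$ is uniquely determined by the parameters $X,Y,G,f\in M$, Lemma~\ref{L:unique-M} gives $A\in M$.

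The key point will be the identity $A\cap V=B$. The inclusion $B\subseteq A\cap V$ is immediate. For the converse, I would use that for $x\in X\cap M$ and a bounded operator $L\in M$ the oscillation $\limsup_{h\to 0}\|f(x+h)-f(x)-L(h)\|/\|h\|$ equals $\inf_{\delta\in\qe_+}\sup_{0<\|h\|<\delta}\|f(x+h)-f(x)-L(h)\|/\|h\|$, and that by Lemma~\ref{L:sup-fin-M} each of these suprema is unchanged if taken over $U(0,\delta)\cap X_M$ only; hence this oscillation coincides with the corresponding one for $f\rest_{V\cap G}$ and $L\rest_{X_M}$. Combined with the absoluteness for $M$ of ``$f$ is Fréchet differentiable at $x$'' (which lets one test Fréchet differentiability of $f$ at a point of $X\cap M$ using only operators from $M$, using that a $\tfrac1k$-approximate derivative for every $k$ already forces an exact one) and with Theorem~\ref{T:Asplund-model} (so that the Fréchet derivative of $f\rest_{V\cap G}$ at such a point, if it exists, is a norm-limit of restrictions to $X_M$ of operators from $M$ — this is where the Asplund property of $X$ enters, and for a general range space $Y$ one needs the corresponding density statement for $\mathcal L(X,Y)$), one obtains $A\cap V\subseteq B$ on the dense set $X\cap M$, and then $A\cap V=B$ after passing to closures. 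I expect this identification to be the main obstacle.

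Granting $A\in M$ and $A\cap V=B$, the equivalence should fall out of the machinery already in place. For (i)$\Rightarrow$(ii): cone smallness of $A$ in $X$ means $A$ is $\sigma$-$\beta$-cone porous for every $\beta\in(0,1)\cap\qe$ (as in the proof of Theorem~\ref{T:cone}); since $M$ is a pointwise $(\P_X^{\beta\text{-}cone}\to\P_{X_M}^{\beta\text{-}cone})$-model by Proposition~\ref{P:pointwise-cone-first}, Proposition~\ref{P:porosity-like} gives that $A\cap V=B$ is $\sigma$-$\beta$-cone porous in $V$ for every such $\beta$, i.e. $B$ is cone small in $V$. For (ii)$\Rightarrow$(i) I would argue contrapositively: if $A$ is not cone small in $X$ then it fails to be $\sigma$-$\alpha$-cone porous for some $\alpha\in(0,1)\cap\qe$, and the goal is to deduce that $B=A\cap V$ is not $\sigma$-$\alpha$-cone porous in $V$; when $A$ (i.e. $N_F(f)$) is Suslin this is exactly Proposition~\ref{P:reduction-porosity-like} together with the pointwise $(\P_X^{\alpha\text{-}cone}\leftrightarrow\P_{X_M}^{\alpha\text{-}cone})$-model property of Proposition~\ref{P:point-cone}, and for a general $f$ one reduces to the Suslin case exactly as in the proof of \cite[Theorem 1.2]{Cuth-Rmoutil}. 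This closes the argument; the two places needing real care are the identity $A\cap V=B$ above and the reduction to Suslin sets for arbitrary $f$ in the second implication.
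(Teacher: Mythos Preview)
Your overall scheme is exactly what the paper intends: it gives no proof here and simply points to \cite[Theorem~1.2]{Cuth-Rmoutil} and \cite[Proposition~CR]{Vesely}, so the plan ``take a $(*)$-elementary submodel $M$ containing $X,Y,G,f$ and a countable dense subset of $V_0$, set $V=X_M$, identify $A\cap V$ with $B$, then invoke the cone-smallness separable determination'' is the intended one.

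Two genuine issues deserve comment. First, the identification $A\cap V=B$ (separable reduction of Fr\'echet differentiability) is already carried out in \cite{Cuth} and used in \cite[Theorem~1.2]{Cuth-Rmoutil} for \emph{arbitrary} Banach spaces $X,Y$; it does not rely on Theorem~\ref{T:Asplund-model}, and your suggested ``corresponding density statement for $\mathcal L(X,Y)$'' is neither available nor needed. Your route through Lemma~\ref{L:sup-fin-M} to compare the oscillations for $L\in M$ is fine, but the step where you pass from a Fr\'echet derivative $L'\in\mathcal L(X_M,Y)$ of $f\rest_{X_M}$ back to an operator in $M$ should be done via absoluteness (as in \cite{Cuth}), not via a density-of-operators argument. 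Also, ``passing to closures'' is not legitimate as stated, since $A$ and $B$ need not be closed; the cited references handle points of $X_M\setminus M$ directly.

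Second, and more importantly: you correctly observe that Proposition~\ref{P:point-cone}, which drives the hard direction $(\mathrm{ii})\Rightarrow(\mathrm{i})$ through Proposition~\ref{P:reduction-porosity-like}, requires $X$ to be Asplund. The proposition, however, is stated for arbitrary Banach $X$. With only Proposition~\ref{P:pointwise-cone-first} (valid without Asplund) one gets $(\mathrm{i})\Rightarrow(\mathrm{ii})$ via Proposition~\ref{P:porosity-like}, but not the converse. So the Asplund hypothesis is genuinely needed for the argument the paper has in mind; its absence from the statement looks like an oversight, and you should flag it rather than try to supply it through the differentiability step. Your handling of the Suslin requirement (defer to the reduction in \cite[Theorem~1.2]{Cuth-Rmoutil}) is the right move.
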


\end{document}